\documentclass[11pt,oneside]{amsart}

\usepackage[utf8]{inputenc}
\usepackage[english]{babel}
\usepackage[margin=3cm]{geometry} 
\usepackage{quiver}
\usepackage{amsmath,amsthm,amssymb}
\usepackage{mathtools}
\usepackage{graphicx}
\usepackage[colorlinks=true, allcolors=blue]{hyperref}
\usepackage{mathrsfs,calligra}
\usepackage{tikz-cd,adjustbox}
\usepackage{spverbatim,comment}
\usepackage[shortlabels]{enumitem}

\usepackage[backend=biber,style=alphabetic,maxalphanames=10,minalphanames=10, maxcitenames=99, , maxbibnames=99, doi=false,
  isbn=false,
  url=false,
  eprint=false, ]{biblatex}
\theoremstyle{plain}
\newtheorem{theorem}{Theorem}[section]
\newtheorem{lemma}[theorem]{Lemma}
\newtheorem{proposition}[theorem]{Proposition}
\newtheorem{corollary}[theorem]{Corollary}

\theoremstyle{definition}
\newtheorem{definition}[theorem]{Definition}
\newtheorem{example}[theorem]{Example}

\theoremstyle{remark}
\newtheorem{remark}[theorem]{Remark}

\newcommand{\Rom}[1]{\uppercase\expandafter{\romannumeral#1}}

\newcommand{\G}{\mathcal G}
\renewcommand{\H}{\mathcal H}

\newcommand{\X}{\mathcal X}
\newcommand{\Y}{\mathcal Y}

\newcommand{\CC}{\mathbb C}
\newcommand{\DD}{\mathbb D}

\newcommand{\HH}{\mathbb H}
\newcommand{\LL}{\mathbb L}

\newcommand{\QQ}{\mathbb Q}
\newcommand{\RR}{\mathbb R}
\newcommand{\TT}{\mathbb T}

\newcommand{\xto}{\xrightarrow} 
\newcommand{\ito}{\hookrightarrow} 
\newcommand{\sto}{\twoheadrightarrow} 

\DeclareMathOperator{\sHom}{\mathscr{H}\text{\kern -3pt {\calligra\large om}}\,}
\newcommand{\Xhat}{\widehat{X}}      
\newcommand{\Yhat}{\widehat{Y}} 
\newcommand{\RHom}{{\bf R} \H om}
\DeclareMathOperator{\Ext}{Ext}

\newcommand{\DB}{\underline{\Omega}} 

\DeclareMathOperator{\Gr}{Gr}

\DeclareMathOperator{\depth}{depth}

\DeclarePairedDelimiter\abs{\lvert}{\rvert}

\makeatletter
\let\oldabs\abs
\def\abs{\@ifstar{\oldabs}{\oldabs*}}
\makeatother

\theoremstyle{definition} 
\newcommand{\thistheoremname}{}
\newtheorem*{genericthm}{\thistheoremname}

\makeatletter
\def\thm@space@setup{%
\thm@preskip=4pt plus 1pt minus 1pt 
\thm@postskip=4pt plus 1pt minus 1pt 
}
\makeatother

\newcommand{\Addresses}{{
  \vspace{\bigskipamount}
  \footnotesize
  
  \textsc{Department of Mathematics, Harvard University, 1 Oxford Street, Cambridge, MA 02138, USA}\par\nopagebreak
  \textit{E-mail address}: \texttt{astenie@math.harvard.edu}
}}

\title{Global smoothing of singular Fano and Calabi-Yau varieties}

\author{Anda Tenie}
\thanks{AT was partially supported by a Simons Dissertation Fellowship in Mathematics}

\bibliography{main.bib}
\begin{document}

\begin{abstract}
We study the problem of smoothing Fano and Calabi-Yau varieties with isolated Du Bois lci singularities. For Fano varieties, we show that any such $Y$ admits a deformation to a Fano variety with only $1$-rational singularities, and if none of the singularities of $Y$ are $1$-rational, then $Y$ is smoothable. For Calabi-Yau varieties, we show first that any such $Y$ deforms to a Calabi-Yau with only $1$-Du~Bois singularities. Moreover, if none of the singularities of $Y$ are $1$-Du~Bois then $Y$ is smoothable. When we allow $1$-liminal singularities, we give a global criterion in terms of the Hodge-Du~Bois numbers of $Y$ which ensures that $Y$ is smoothable. These theorems recover and generalize results for threefolds of Friedman, Namikawa, Namikawa-Steenbrink, Gross, and Friedman-Laza. In higher dimensions, our results provide alternative smoothing conditions and also extend the work of Friedman-Laza from the case of rational hypersurface singularities to Du Bois lci singularities.
\end{abstract}

\maketitle

\section{Introduction}

In this paper, we study the problem of smoothing a Fano or Calabi-Yau variety $Y$ satisfying certain singularity conditions. The question of smoothing such varieties has been studied extensively in the case of threefolds \cite{friedman1986simultaneous}, \cite{namikawa1994deformations}, \cite{NamFano}, \cite{namikawa1995global}, \cite{gross1997deforming}, \cite{nam02} and, more recently, in higher dimensions \cite{friedman2025deformations}, \cite{friedmanlocal}, \cite{FLklim}, \cite{FL0lim}.
The construction of smoothings of singular Fano or Calabi-Yau varieties is based on a delicate interplay between local and global phenomena. Even when global deformations are unobstructed and each singularity admits a local smoothing, there is no guarantee that these local directions glue to a single global deformation that smooths all singular points.

\begin{definition}\label{def: CYvar}
(i) We say $Y$ is a singular Fano variety if $Y$ is a projective algebraic variety over $\CC$ of dimension $n\geq 3$ with isolated Du Bois
lci (local complete intersection) singularities such that $\omega_Y^{-1}$
is ample. \newline
(ii) By a singular Calabi-Yau variety $Y$ we mean that $Y$ is a projective algebraic variety over $\CC$ (or, more generally, a certain compact complex analytic space; see Remark \ref{remark: analyticsp}) of dimension $n\geq 3$ with isolated Du Bois lci singularities such that $\omega_Y\cong \mathcal{O}_Y$ and $H^1(\mathcal{O}_Y)=0$ (see Remark \ref{rmk: projsm}).
\end{definition} 

\begin{definition}\label{def:smoothing}
We say that $Y$ is smoothable if there exists a flat analytic deformation
$\pi:\Y\to\Delta$ over the unit disk with special fiber
$\Y_0\simeq Y$ and smooth general fiber. 
\end{definition}

To show that certain singular varieties are smoothable, we impose local criteria on the singularities of $Y$ as well as a global condition on the variety $Y$ (in the Calabi-Yau case). Our approach is inspired by the important work of Friedman-Laza, which initiated the study of smoothing criteria in terms of the recently introduced $k$-rational and $k$-Du~Bois singularities. These notions emerged from recent advances in Hodge theory and refine the classical notions of rational and Du~Bois singularities. For more details, one can consult Section \ref{sec: 2.2} and, in particular, we refer to Definition \ref{def: 1DB} and Definition \ref{def: 1rat} for the notions of 1-Du Bois and 1-rational singularities which will be of particular importance in our local conditions.

Our global criterion in the Calabi-Yau case is expressed in terms of the Hodge-Du~Bois numbers of $Y,$ which are singular analogues of the Hodge numbers of a smooth projective variety, and are defined by
\[
\underline{h}^{p,q}(Y) :=\dim_{\CC}\mathbb{H}^q (Y,\underline{\Omega}^{p}_{Y}),
\]
where $\underline{\Omega}^{p}_{Y}$ denotes the $p$-th graded piece of the Du~Bois complex of $Y$ (see Section \ref{subsect: Hodge}).

\subsection{Statement of main results}
We begin by stating our results, and then discuss their consequences and relations to prior work. We start with the case when $Y$ is a Calabi-Yau variety. Assuming none of the singularities are 1-Du Bois, we find that no global conditions are necessary to obtain a smoothing:

\begin{theorem}\label{thm: introlcinot1DB}\textup{(Theorem \ref{thm: lcinot1DB})}
Suppose $Y$ is a singular Calabi-Yau variety with isolated Du Bois lci singularities. Then $Y$ can be deformed to a Calabi-Yau variety whose singularities are 1-Du Bois. Moreover, if one assumes in addition that none of the singularities of $Y$ are 1-Du Bois, then $Y$ is smoothable, and every small smoothing is Calabi-Yau.
\end{theorem}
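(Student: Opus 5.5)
The plan follows the Friedman–Laza strategy: unobstructedness of $\mathbb{T}^1_Y$-deformations, a first-order computation showing that the local-to-global map hits a "strictly smoothing" direction at every non-1-Du Bois point, and a semicontinuity argument to pass from first order to actual smoothings.

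\textbf{Step 1: unobstructedness.} Since $Y$ is lci with $\omega_Y\cong\mathcal O_Y$, I would show that the deformation functor $\mathrm{Def}(Y)$ is unobstructed via the $T^1$-lifting criterion of Ran–Kawamata. This amounts to showing that $\mathbb{T}^1_{\Y_k/A_k}$ is free over $A_k$ and commutes with base change. Using $\Omega^{n-1}$-type duality for lci $Y$ with trivial canonical sheaf, $\mathbb{T}^1_Y=\mathrm{Ext}^1(\Omega^1_Y,\mathcal O_Y)\cong \mathbb{H}^1(Y,\mathrm{R}\sHom(\Omega^1_Y,\omega_Y))$. For Du Bois lci isolated singularities, this can be compared with $H^1(Y,\DB^{n-1}_Y)$ and related Hodge-theoretic groups. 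Their base-change property then follows from degeneration of the Hodge–Du Bois spectral sequence, as in the Friedman–Laza framework. I take this as available from the paper's preliminaries.

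\textbf{Step 2: local-to-global map.} Consider the local-to-global exact sequence
\[
H^1(Y,T^0_Y)\to \mathbb{T}^1_Y\to H^0(Y,T^1_Y)\to H^2(Y,T^0_Y).
\]
The key is to show that the image of $\mathbb{T}^1_Y\to H^0(T^1_Y)=\bigoplus_x T^1_{Y,x}$ contains, at each singular point $x$, a "strictly smoothing" direction modulo a suitable subspace. Following Friedman–Laza, I would use the filtration on $T^1_{Y,x}$ given by the Hodge filtration on local cohomology. The 1-Du Bois condition at $x$ is equivalent to the vanishing of a certain graded piece, $H^0(R^1\pi_*\Omega^{n-1}_{\hat X}(\log E)|_E)$ or its $H^2_E$ analogue. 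The obstruction to lifting a local first-order smoothing lies in $H^2(T^0_Y)\cong H^2(\Omega^{n-1})$ and factors through a map from local cohomology $H^2_x$ into $H^2(Y,\DB^{n-1}_Y)$. The non-1-Du Bois hypothesis ensures that the relevant local piece $K_x$ is nonzero. Hodge theory (purity/weight arguments and the semipurity of the link) then shows that the image of $\mathbb{T}^1_Y$ surjects onto each quotient $T^1_{Y,x}/K'_x$. Any element mapping outside the "bad" subspace $K'_x$ at every $x$ is first-order smoothing, i.e. not equisingular, at each point.

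\textbf{Step 3: first order to actual smoothing.} By unobstructedness, choose a one-parameter deformation $\Y\to\Delta$ whose Kodaira–Spencer class is a generic element of $\mathbb{T}^1_Y$. For isolated lci singularities, a generic one-parameter deformation whose first-order class is nonzero in the appropriate quotient $T^1_{Y,x}/\mathfrak m_x T^1_{Y,x}$ smooths the point, since the general fiber of a versal lci family is smooth. More precisely, I would use the local statement that a deformation of a non-1-Du Bois lci point with first-order class outside $K'_x$ has general fiber that is either smooth near $x$ or has only 1-Du Bois singularities.

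For the first claim, note that nearby fibers again have isolated Du Bois lci singularities, by openness of Du Bois-ness and of lci. Along this deformation the total "non-1-Du Bois complexity" strictly drops, so by induction, e.g. on the sum of Tjurina numbers of non-1-Du Bois points, we eventually reach a fiber with only 1-Du Bois singularities. For the second claim, 1-Du Bois points cannot arise from the generic deformation when the singularity is not 1-Du Bois, by the local argument, so the general fiber is smooth. Finally, $\omega\cong\mathcal O$ and $H^1(\mathcal O)=0$ propagate to small fibers by semicontinuity and deformation invariance of $\omega$ for Gorenstein families with $H^1(\mathcal O_Y)=0$.

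\textbf{Main obstacle.} I expect the hardest part to be Step 2, specifically identifying the image of $\mathbb{T}^1_Y$ in $\bigoplus_x T^1_{Y,x}$ and proving that non-1-Du Bois points admit a globally realizable direction. This requires the precise Hodge-theoretic description of the kernel of $H^0(T^1_Y)\to H^2(T^0_Y)$ via local cohomology of $\DB^{n-1}$. The subtlety is that, in the liminal case, the contributions at different points can interact globally, which is why the theorem excludes such points.
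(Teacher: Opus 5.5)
Your outline has the right overall shape: unobstructedness via Friedman's theorem, a quotient of $T^1_{Y,x}$ coming from local cohomology that is nonzero exactly at non-1-Du Bois points, then a smoothing criterion. But Step 2, which you yourself call the crux, is never actually supplied, and the group in which you propose to control the obstruction is the wrong one. The map $\bigoplus_x H^2_{E_x}(\Omega^{n-1}_{\Xhat_x}(\log E_x)(-E_x))\to \HH^2(Y,\DB^{n-1}_Y)=H^2(\Yhat,\Omega^{n-1}_{\Yhat}(\log E)(-E))$ is in general \emph{not} zero. Its vanishing is exactly what the extra hypothesis of Theorem~\ref{thm: lcigeneral} is used to obtain, and it fails for Namikawa's non-smoothable nodal Calabi--Yau threefold. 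The missing idea is Proposition~\ref{prop:0map}: drop the twist by $-E$ and prove that $\bigoplus_x H^2_{E_x}(\Omega^{n-1}_{\Xhat_x}(\log E_x))\to H^2(\Yhat,\Omega^{n-1}_{\Yhat}(\log E))$ vanishes unconditionally. Combine this with $\TT^1_Y\cong H^1(V,\Omega^{n-1}_V)$ and the local surjections $T^1_{Y,x}\sto H^2_{E_x}(\Omega^{n-1}_{\Xhat_x}(\log E_x))$, whose kernel is $H^1(\Xhat_x,\Omega^{n-1}_{\Xhat_x}(\log E_x))$ and whose target is nonzero exactly when $x$ is not 1-Du Bois. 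The local cohomology sequence on $\Yhat$ then makes $\TT^1_Y\to\bigoplus_x H^2_{E_x}(\Omega^{n-1}_{\Xhat_x}(\log E_x))$ surjective. This vanishing is not a purity or weight statement about the link. By Serre and local duality it is equivalent to the vanishing of the edge map $H^{n-2}(\Yhat,\Omega^1_{\Yhat}(\log E)(-E))\to\bigoplus_x H^{n-2}(\Omega^1_{\Xhat_x}(\log E_x)(-E_x))$ in the spectral sequence of $\DB^1_Y\simeq\mathbf{R}\pi_*\Omega^1_{\Yhat}(\log E)(-E)$. Steenbrink's vanishing $b^{1,q}=0$ for $0<q<n-2$ makes $H^{n-2}(Y,\H^0\DB^1_Y)\to\HH^{n-2}(Y,\DB^1_Y)$ injective. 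For Du Bois singularities this map is also surjective by Popa--Shen--Vo, which uses $E_1$-degeneration and hence projectivity. So $E_\infty^{0,n-2}=0$. Nothing in your sketch replaces this step, and it is precisely where the Du Bois hypothesis enters.

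There are also secondary problems.
\begin{enumerate}
\item Your description of the 1-Du Bois condition as the vanishing of $H^0(R^1\pi_*(\Omega^{n-1}_{\Xhat}(\log E)|_E))=\Gr_F^{n-1}H^n(L)$ is wrong. That group is nonzero at 1-liminal points, which are 1-Du Bois. The correct local piece is $H^2_E(\Omega^{n-1}_{\Xhat}(\log E))$.
\item In Step 3, the induction on Tjurina numbers and the unproved ``local statement'' are unnecessary. 1-Du Bois points stay 1-Du Bois under small deformations by semicontinuity of the minimal exponent, so it suffices to smooth all non-1-Du Bois points at once.
\item ``The general fibre of a versal lci family is smooth'' does not justify your first-order test, because $\mathrm{Def}(Y)\to\mathrm{Def}(Y,x)$ is far from surjective. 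You need a criterion for the family over the smooth germ $\mathrm{Def}(Y)$; the paper invokes the Gross-type Lemma~\ref{lemma: smoothinglci} for this.
\end{enumerate}
Even that criterion needs care for non-hypersurface ICIS. Take the ICIS $\{f_1=f_2=0\}\subset\CC^{n+2}$ with $f_1=x_{n+2}^2+\sum_{i\le n+1}x_i^3$ and $f_2=\sum_{i\le n+1}x_i^2$. The deformation $\{f_1+t=f_2=0\}$ has Kodaira--Spencer class $(1,0)\notin m\,T^1$. Yet for every $t\neq0$ the fibre has $A_1$ points at $x_1=\dots=x_{n+1}=0$, $x_{n+2}=\pm\sqrt{-t}$. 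A safe sufficient condition is that the image surject onto $T^1_{Y,x}/m_xT^1_{Y,x}$, since then the total space is smooth at $x$. For hypersurfaces this is the same as not lying in $m_xT^1_{Y,x}$ (Proposition~\ref{prop: cyclicmod}).
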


If one allows 1-liminal singularities (i.e. singularities that are 1-Du Bois but not 1-rational) we find an appropriate global condition which guarantees the existence of a smoothing in terms of the Hodge-Du Bois numbers of $Y$:

\begin{theorem}\label{thm: introlcigeneral}\textup{(Theorem \ref{thm: lcigeneral})}
Suppose $Y$ is a singular Calabi-Yau variety of dimension $n$ with isolated Du Bois lci singularities. Assume that we have the following equality of Hodge-Du Bois numbers $\underline{h}^{n-1,2}(Y)=\underline{h}^{1,n-2}(Y).$ Then $Y$ can be deformed to a Calabi-Yau whose singularities are 1-rational. Moreover, if one assumes in addition that none of the singularities of $Y$ are 1-rational, then $Y$ is smoothable, and every small smoothing is Calabi-Yau. 
\end{theorem}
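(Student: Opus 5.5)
Following Friedman–Laza, I will use the unobstructedness of deformations together with a "first-order smoothing" criterion. First, I will show that $\mathbf{Def}(Y)$ is unobstructed. For a Calabi-Yau $Y$ with isolated lci Du Bois singularities, the $T^1$-lifting criterion applies because $\omega_Y\cong\mathcal O_Y$ gives $\Omega^{n-1}_Y$-type identifications $T^0_Y\cong \sHom(\Omega^1_Y,\omega_Y)$ and an analogous statement for $\mathbb T^1_Y=\Ext^1(\Omega^1_Y,\mathcal O_Y)$. The Du Bois hypothesis gives the needed degeneration of Hodge–de Rham for the relevant graded pieces $\DB^{n-1}$ (and its local cohomology), so the $T^1$-lifting property holds. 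Next, I will analyze the local-to-global map $\mathbb T^1_Y\to H^0(Y,T^1_Y)=\bigoplus_x T^1_{Y,x}$. By the local-to-global Ext spectral sequence, its cokernel injects into $H^2(Y,T^0_Y)$. Using $T^0_Y\cong \Omega^{[n-1]}_Y$ and comparing reflexive differentials with $\DB^{n-1}_Y$, I will express the obstruction to surjectivity onto a suitable quotient of $\bigoplus_x T^1_{Y,x}$ in terms of $H^2(Y,\DB^{n-1}_Y)$ and the local cohomology groups $H^2_{x}$. The local invariants of the isolated singularities enter here, namely the $1$-Du Bois/$1$-rational defects.

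Under $\underline h^{n-1,2}(Y)=\underline h^{1,n-2}(Y)$, the key step is to show that the image of $\mathbb T^1_Y$ in each $T^1_{Y,x}$ contains a "strong first-order smoothing" direction. Concretely, the local quantity measuring failure of $1$-rationality is the graded piece $\Gr_F$ of $H^n$ of the link (equivalently, of $H^{n-1}$ of the Milnor fiber with weight $n-1$ part). The global cokernel maps into $H^2(Y,\DB^{n-1}_Y)$, and by Hodge symmetry for the pure part of $H^{n+1}(Y)$ its dimension compares with $\underline h^{1,n-2}(Y)$, which bounds the image of the link contributions. The equality of Hodge–Du Bois numbers should force the map from $\bigoplus_x(\text{link contributions})$ to $H^{n+1}$-type global classes to vanish in degree $(n-1,2)$. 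Hence every local $1$-liminal defect is realized by a global first-order deformation. I expect this Hodge-theoretic step to be the main obstacle. It requires using the weight filtration in the Mayer–Vietoris/local cohomology sequence $H^n(Y)\to H^n(U)\to\bigoplus_x H^{n+1}_x(Y)\to H^{n+1}(Y)$, with $U=Y\setminus\mathrm{Sing}$, and identifying $\underline h^{n-1,2}-\underline h^{1,n-2}$ with the dimension of the relevant image.

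Given this, I will pick a global first-order deformation whose local components are strong first-order smoothings at every non-$1$-rational point. I will lift it to a one-parameter deformation over $\Delta$ using unobstructedness. Friedman–Laza's local result (a strong first-order smoothing of an isolated lci Du Bois, non-$1$-rational singularity gives a smoothing of that point, and in general moves it to $1$-rational singularities) then shows that the general fiber has only $1$-rational singularities. It is smooth if no singularity of $Y$ was $1$-rational. Finally, the general fiber is Calabi-Yau. Indeed, $\omega$ and $H^1(\mathcal O)$ are constant in small flat families here because Du Bois singularities give $h^i(\mathcal O)$ locally constant and $\omega_{\Y/\Delta}$ is invertible with trivial restriction. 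Moreover, the Du Bois lci property is open, so all small smoothings are Calabi-Yau.
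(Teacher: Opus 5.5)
Your skeleton matches the paper's: unobstructedness, then $\TT^1_Y\to\bigoplus_x T^1_{Y,x}$ followed by a quotient whose lifting obstruction lives in $\HH^2(Y,\DB^{n-1}_Y)$, then a Gross-type criterion. But there is a genuine gap at the central step.

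\textbf{The missing step.} The step that carries the theorem is not supplied: you say the Hodge-number equality ``should force'' the relevant map to vanish, and you flag it as the main obstacle. As formulated, your plan also targets only the link contributions $\Gr_F^{n-1}H^n(L_x)$, and these are the whole local defect only at $1$-liminal points. The correct quotient is $M_x=H^2_{E_x}(\Omega^{n-1}_{\Xhat_x}(\log E_x)(-E_x))$. It is an extension of $H^2_{E_x}(\Omega^{n-1}_{\Xhat_x}(\log E_x))$ by $\Gr_F^{n-1}H^n(L_x)$. At a point that is not $1$-Du Bois the first piece is nonzero, and it is invisible to the topological sequence $\bigoplus_x H^{n+1}_x(Y)\to H^{n+1}(Y)$. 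For example, a threefold $A_2$ point $x^2+y^2+z^2+w^3=0$ has $H^3(L)=0$, yet it must be smoothed.

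The missing ingredient is Proposition~\ref{prop:0map}: with no global hypothesis, the map $\bigoplus_x H^2_{E_x}(\Omega^{n-1}_{\Xhat_x}(\log E_x))\to H^2(\Omega^{n-1}_{\Yhat}(\log E))$ is zero. Dually, $H^{n-2}(\Omega^1_{\Yhat}(\log E)(-E))\to\bigoplus_x H^{n-2}(\Omega^1_{\Xhat_x}(\log E_x)(-E_x))$ vanishes. This follows from three inputs:
\begin{itemize}
\item the Leray spectral sequence of $\DB^1_Y\simeq\mathbf R\pi_*\Omega^1_{\Yhat}(\log E)(-E)$;
\item the vanishing $b^{1,n-3}=0$ at isolated lci points;
\item the surjectivity $H^{n-2}(Y,\Omega^1_Y)\sto\HH^{n-2}(Y,\DB^1_Y)$ for Du Bois singularities (Popa--Shen--Vo, via $E_1$-degeneration).
\end{itemize}
This is where the Du Bois hypothesis is really used.

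\textbf{How the numerical hypothesis then enters.} It needs no weight-filtration or Hodge-symmetry argument. Let $r\colon H^2(\Omega^{n-1}_{\Yhat}(\log E)(-E))\to H^2(\Omega^{n-1}_{\Yhat}(\log E))$ be the natural map.
\begin{itemize}
\item $r$ is surjective, because its cokernel injects into $\bigoplus_x\Gr_F^{n-1}H^{n+1}(L_x)=0$ for lci points.
\item Its source has dimension $\underline{h}^{n-1,2}$ and, by Serre duality, its target has dimension $\underline{h}^{1,n-2}$, so $r$ is an isomorphism.
\item Since $r\circ h'=f'\circ\psi=0$, where $h'\colon\bigoplus_x M_x\to H^2(\Omega^{n-1}_{\Yhat}(\log E)(-E))$, it follows that $h'=0$. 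Hence $\TT^1_Y\to\bigoplus_x M_x$ is onto.
\end{itemize}
Your guess that $\underline{h}^{n-1,2}-\underline{h}^{1,n-2}$ is the dimension of the image of the link classes is correct. It is the rank of the coboundary $\bigoplus_x H^1(\Omega^{n-1}_{\Xhat_x}(\log E_x)|_{E_x})\to H^2(\Omega^{n-1}_{\Yhat}(\log E)(-E))$, so the restriction sequence gives it directly.

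\textbf{Concluding, and two smaller points.} To conclude:
\begin{itemize}
\item pick $\beta_x\in M_x\setminus m_xM_x$, which is possible since $M_x\neq 0$ has finite length;
\item lift $\oplus\beta_x$ to $\TT^1_Y$;
\item use that $T^1_{Y,x}\to M_x$ is $\mathcal O_{Y,x}$-linear and onto, so the lift's image lies outside $m_xT^1_{Y,x}$;
\item apply Lemma~\ref{lemma: smoothinglci} over the smooth $\mathrm{Def}(Y)$.
\end{itemize}
First, the $1$-rational conclusion comes from openness of $1$-rationality (semicontinuity of the minimal exponent) at points already $1$-rational. It does not come from a local result about first-order smoothings. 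Second, your $T^1$-lifting sketch is not an argument for unobstructedness; simply cite Theorem~\ref{thm: unobstructed}.
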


See the discussion after Corollary \ref{cor: 3foldODP}, which explains that the equality of Hodge-Du Bois numbers is a natural higher-dimensional generalization of prior smoothing conditions given by Namikawa-Steenbrink \cite{namikawa1995global} for threefolds. 

The methods developed in this paper also allow us to remove a technical assumption and to extend to the lci setting a result of Friedman-Laza (see Theorem \ref{thm: 1limlci}).

\vspace{0.7em}
We now turn to the case when $Y$ is a Fano variety. We show:

\begin{theorem}\label{thm: introfanothm}\textup{(Theorem \ref{thm: fanothm})}
Suppose $Y$ is a singular Fano variety with isolated Du Bois lci singularities. Then $Y$ can be deformed to a Fano variety whose singularities are 1-rational. Moreover, if one assumes in addition that none of the singularities of $Y$ are 1-rational, then $Y$ is smoothable, and every small smoothing is Fano. 
\end{theorem}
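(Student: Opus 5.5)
The approach follows the Friedman--Laza strategy: prove that deformations of $Y$ are unobstructed, then show that the global first-order deformations surject onto enough of the local deformations to move off every singularity that is not $1$-rational. The key objects are $\mathbb{T}^1_Y = \Ext^1(\Omega^1_Y,\mathcal O_Y)$ and the sheaf $T^1_Y=\sHom$-$\Ext^1(\Omega^1_Y,\mathcal O_Y)$. Since the singularities are isolated, $T^1_Y=\bigoplus_x T^1_{Y,x}$ is supported on the singular points, and the local-to-global spectral sequence gives
\[
0\to H^1(Y,T^0_Y)\to \mathbb{T}^1_Y\to H^0(Y,T^1_Y)\to H^2(Y,T^0_Y).
\]

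\textbf{Unobstructedness.} This is the first step. Because $\omega_Y^{-1}$ is ample and the singularities are lci, I will use the $T^1$-lifting criterion. For this I need the relative versions of $\mathbb{T}^1$ over $A_k=\CC[t]/t^{k+1}$ to be free, i.e.\ to commute with base change. The plan is to identify $\mathbb{T}^i_Y$ with $\Ext^i(\Omega^1_Y,\mathcal O_Y)$ and use Serre duality together with the Du Bois/Kodaira-type vanishing available for Du Bois singularities. Since $\omega_Y^{-1}$ is ample, $\mathbb{T}^2_Y$ is dual to a cohomology group of the form $H^{n-2}(Y,\Omega^1_Y\otimes\omega_Y)$. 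The lci condition makes $\Omega^1_Y$ close to $\underline\Omega^1_Y$ in depth, and Akizuki--Nakano-type vanishing for Du Bois complexes then makes this group vanish. This gives $\mathbb{T}^2_Y=0$ (or at least unobstructedness), so the deformation space is smooth. I will also show $H^2(Y,T^0_Y)=0$ by a similar vanishing, with $T^0_Y$ handled by reflexive extension across the isolated singularities and $n\ge3$. Then $\mathbb{T}^1_Y\to H^0(T^1_Y)$ is surjective.

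\textbf{Local step.} Surjectivity onto all of $H^0(T^1_Y)$ would already give smoothing directions at every point; the real content is the local analysis in the lci case, where a general element of $T^1_{Y,x}$ smooths an isolated lci singularity. If $\mathbb{T}^1_Y\to H^0(T^1_Y)$ is surjective, a general global first-order deformation restricts to a general local one at every singular point simultaneously. Lifting it to an actual one-parameter deformation over $\Delta$ (possible by unobstructedness) then gives a family whose general fiber is smooth near each $x$, hence smooth. Fano-ness persists for small deformations by openness of ampleness, so every small smoothing is Fano.

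\textbf{Main obstacle.} I expect the hardest point to be the vanishing of $H^2(Y,T^0_Y)$, or the correct obstruction space, in the presence of $1$-rational singularities. This is where the first claim, ``deform to $1$-rational,'' comes from. If the vanishing fails, I will instead show that the image of $\mathbb{T}^1_Y$ contains, for each non-$1$-rational point $x$, a subspace of $T^1_{Y,x}$ that is not contained in the equisingular/first-order-trivial locus. The natural tool is the local cohomology sequence $H^1(Y,T^0_Y)\to H^1(U,T_U)\to H^2_Z(T^0_Y)$ with $U=Y\setminus\mathrm{Sing}$, together with the identification of the failure of $1$-rationality with the nonvanishing of a graded piece of $H^{n-1}$ local cohomology of $\underline\Omega^1$ (Definition \ref{def: 1rat}). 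Duality, via $\omega_Y^{-1}$ ample, then shows that the cokernel of $\mathbb{T}^1_Y\to H^0(T^1_Y)$ only detects the $1$-rational part at each point. Thus a general deformation destroys every non-$1$-rational singularity, and iterating this argument (a finite process, since singularities only become milder) yields a deformation with only $1$-rational singularities. When none of the singularities is $1$-rational, the cokernel contributes nothing at any point, and the general fiber is smooth.
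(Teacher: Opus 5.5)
\textbf{Verdict: genuine gap.} Your unobstructedness step is fine and is essentially the paper's Theorem \ref{thm: FanoUnobstructed}: Serre duality plus a dual Nakano-type vanishing for $\DB^1_Y\otimes\omega_Y$ gives $\TT^2_Y=0$, and $T^1$-lifting is not needed. The gap is in the lifting step.

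\textbf{The vanishing $H^2(Y,T^0_Y)=0$ is false.} Your main claim is that $H^2(Y,T^0_Y)=0$, so that $\TT^1_Y\to H^0(T^1_Y)$ is surjective. Since $\TT^2_Y=0$, the local-to-global sequence identifies $H^2(Y,T^0_Y)$ with the cokernel of $\TT^1_Y\to H^0(T^1_Y)$. The vanishing would therefore smooth $1$-rational points as well, which is strictly more than the theorem asserts, and it fails. Take the cubic $9$-fold $Y=\{\sum_{i=0}^{11}x_i=\sum_{i=0}^{11}x_i^3=0\}\subset\mathbb P^{10}$.
\begin{itemize}
\item Its singular points are the $\binom{12}{6}/2=462$ points with six coordinates equal to $1$ and six equal to $-1$.
\item All of them are ordinary double points, hence $1$-rational since $n\ge 4$, and each has $T^1_{Y,x}\cong\CC$.
\item Since $H^1(T_{\mathbb P^{10}}|_Y)=0$, every first-order deformation is embedded. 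So the image of $\TT^1_Y$ in $\bigoplus_x T^1_{Y,x}$ is obtained by evaluating cubic forms on $\mathbb P^{10}$ at the nodes, and that space has dimension $286<462$.
\end{itemize}
What Nakano-type vanishing actually gives is weaker. $T^0_Y$ and $\H^0(\DB^{n-1}_Y)\otimes\omega_Y^{-1}$ have the same $H^2$, and the spectral sequence for $\HH^2(Y,\DB^{n-1}_Y\otimes\omega_Y^{-1})=0$ only shows that $H^2(Y,T^0_Y)$ is a quotient of $\bigoplus_x H^1(\Omega^{n-1}_{\Xhat_x}(\log E_x)(-E_x))$. At a $1$-rational point this group maps onto $T^1_{Y,x}$, so nothing forces the cokernel to vanish.

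\textbf{Your fallback omits the ingredients that make the proof work.} It points the right way but asserts the conclusion (``the cokernel only detects the $1$-rational part'') without a mechanism.
\begin{itemize}
\item \emph{The correct local quotient.} The relevant object is the $\mathcal O_{Y,x}$-module quotient $T^1_{Y,x}\cong H^1(U_x,T_{U_x})\sto H^2_{E_x}(\Omega^{n-1}_{\Xhat_x}(\log E_x)(-E_x))$ coming from local cohomology. It is nonzero exactly when $x$ is not $1$-rational (Definition \ref{def: 1rat}). Your ``graded piece of $H^{n-1}$ local cohomology of $\DB^1$'' is not this object.
\item \emph{Global surjectivity onto these quotients.} Set $\G=\Omega^{n-1}_{\Yhat}(\log E)(-E)\otimes\pi^*\omega_Y^{-1}$, so that $\G|_V\cong T_V$. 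Local cohomology gives an exact sequence $\TT^1_Y\cong H^1(V,T_V)\to H^2_E(\G)\to H^2(\Yhat,\G)$. Moreover $H^2(\Yhat,\G)=\HH^2(Y,\DB^{n-1}_Y\otimes\omega_Y^{-1})=0$ by Nakano-type vanishing for Du Bois complexes, because $(n-1)+2>n$. This is where ampleness of $\omega_Y^{-1}$ enters, not through a duality argument.
\item \emph{The right smoothing criterion.} What you need is not ``outside the equisingular locus'' but ``image not contained in $m_xT^1_{Y,x}$'', which is the hypothesis of Lemma \ref{lemma: smoothinglci} applied to the smooth base $\text{Def}(Y)$. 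Surjectivity onto a nonzero finite-dimensional quotient module forces this by Nakayama.
\item \emph{Semicontinuity instead of iteration.} No iteration is needed, but ``singularities only become milder'' must be justified. The paper uses semicontinuity of the minimal exponent to keep $1$-rational points $1$-rational under small deformation. The single deformation above then already gives the first assertion.
\end{itemize}
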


\begin{remark}\label{remark: hypsmoothtotal}
If one further assumes that $Y$ has hypersurface singularities in the context of all of the smoothing theorems above, by using Proposition \ref{prop: cyclicmod} we can show, along the lines of \cite{friedman2025deformations}, the stronger result that there exists a flat analytic deformation over a disk $\Y \to \Delta $ of $Y$ with smooth total space $\Y$. 
\end{remark}

We also show the following unobstructedness result, which has a simple proof:

\begin{theorem}\textup{(Theorem \ref{thm: FanoUnobstructed})}
Let $Y$ be a Fano variety with isolated lci singularities. Then $\TT^i_Y=0$ for $i\geq 2$ and, in particular, deformations of $Y$ are unobstructed.
\end{theorem}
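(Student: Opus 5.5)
The plan is to use the local-to-global spectral sequence for $\TT^i_Y=\Ext^i(\LL_Y,\mathcal{O}_Y)$, where for lci $Y$ the cotangent complex $\LL_Y$ is perfect of amplitude $[-1,0]$. The sequence reads
\[
E_2^{p,q}=H^p(Y,\mathcal{T}^q_Y)\Longrightarrow \TT^{p+q}_Y, \qquad \mathcal{T}^q_Y=\mathcal{E}xt^q(\LL_Y,\mathcal{O}_Y).
\]
Because $Y$ is lci, $\mathcal{T}^q_Y=0$ for $q\geq 2$. Because the singularities are isolated, $\mathcal{T}^1_Y$ is supported on a finite set, so $H^p(Y,\mathcal{T}^1_Y)=0$ for $p\geq 1$. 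Hence for $i\geq 2$ the only possible contribution to $\TT^i_Y$ is $H^i(Y,\mathcal{T}^0_Y)=H^i(Y,T_Y)$, and it suffices to show $H^i(Y,T_Y)=0$ for $i\geq 2$. The unobstructedness statement then follows because $\TT^2_Y$ is the obstruction space.

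To prove the vanishing of $H^i(Y,T_Y)$, use that $\omega_Y$ is a line bundle, since $Y$ is lci and hence Gorenstein. Set $n=\dim Y$. I would identify $T_Y\cong \Omega^{n-1}_Y{}^{\vee\vee}\otimes\omega_Y^{-1}$ away from the singular points, or more directly use Serre duality on the Cohen–Macaulay variety $Y$:
\[
H^i(Y,T_Y)\cong \Ext^{n-i}(T_Y,\omega_Y)^\vee .
\]
The most robust route is to write $T_Y=\sHom(\Omega^1_Y,\mathcal{O}_Y)$ and dualize via the two-term locally free resolution coming from the lci embedding. Locally $Y\subset M$ with conormal sequence $0\to I/I^2\to\Omega_M|_Y\to\Omega^1_Y\to0$, which is exact on the left for reduced lci $Y$. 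Dualizing gives
\[
0\to T_Y\to T_M|_Y\to N_{Y/M}\to \mathcal{T}^1_Y\to 0 .
\]
Globally I would instead take a projective embedding $Y\subset\mathbb{P}^N$ and factor through the complete intersection directions. The cleaner formulation is Kodaira–Nakano–type vanishing in the form $H^i(Y,\Omega^{[n-1]}_Y\otimes L)=0$ for ample $L$ and $i\geq 2$. Since $T_Y\cong \Omega^{[n-1]}_Y\otimes\omega_Y^{-1}$ (both sides are reflexive, agree off codimension $\geq 3$, and $T_Y$ is reflexive because $Y$ is normal, hence $S_2$) and $\omega_Y^{-1}$ is ample, this gives the desired vanishing for $i\geq 2$.

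The main obstacle is justifying this Nakano-type vanishing for reflexive $(n-1)$-forms on a singular variety. Here I would exploit the lci hypothesis together with depth. For isolated lci singularities of dimension $n\geq3$, $\Omega^1_Y$ has depth $\geq n-1$ at the singular points and is torsion-free, and $\Omega^{[n-1]}_Y$ coincides with $\sHom(\Omega^1_Y,\omega_Y)$. Apply Serre duality on $Y$:
\[
H^i(Y,T_Y)=H^i\bigl(Y,\sHom(\Omega^1_Y,\mathcal{O}_Y)\bigr)\cong \Ext^{n-i}(\Omega^1_Y,\omega_Y)\text{-dual twisted, i.e. } H^{n-i}(Y,\Omega^1_Y\otimes\omega_Y)^\vee,
\]
up to local $\mathcal{E}xt^j(\Omega^1_Y,\mathcal{O}_Y)$ terms. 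Those terms vanish for $j\geq 2$ and are finitely supported for $j=1$, so by the same spectral-sequence argument they do not affect the degrees $n-i\leq n-2$. It then remains to show $H^{j}(Y,\Omega^1_Y\otimes\omega_Y)=0$ for $j\leq n-2$, with $\omega_Y$ anti-ample. This is Akizuki–Nakano type vanishing for $\Omega^1_Y$ twisted by a negative line bundle. I would prove it by taking a general hyperplane section $H\in|m\omega_Y^{-1}|$ avoiding the singular points and running the standard induction with the sequences
\[
0\to\Omega^1_Y\otimes L^{-1}(-H)\to\Omega^1_Y\otimes L^{-1}\to\Omega^1_Y|_H\otimes L^{-1}\to0
\]
and $0\to \mathcal{O}_H(-H)\to\Omega^1_Y|_H\to\Omega^1_H\to0$, together with Kodaira/Nakano on the smooth $H$. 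Alternatively I would invoke the Du Bois vanishing $H^j(Y,\DB^1_Y\otimes L^{-1})=0$ for $j<n-1$ and compare $\Omega^1_Y$ with $\DB^1_Y$. The comparison is where the Du Bois or lci depth input would be needed, and it is the step I expect to be delicate.
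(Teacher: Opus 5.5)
\textbf{The reduction to $H^i(Y,T_Y)$ fails at $i=2$.} Reducing to $H^i(Y,T_Y)=0$ for all $i\ge 2$ is logically sufficient, but for $i=2$ the target statement is false. The five-term sequence of your local-to-global spectral sequence is $\TT^1_Y\xrightarrow{\eta}H^0(T^1_Y)\to H^2(Y,T_Y)\to\TT^2_Y$. So $H^2(Y,T_Y)$ contains $\operatorname{coker}\eta$, which measures the failure of local first-order deformations to globalize, and this is often nonzero on Fano varieties.

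For instance, let $Y\subset\mathbb{P}^4$ be the cubic cone $f(x_0,\dots,x_3)=0$ over a smooth cubic surface. It is Fano with one isolated rational hypersurface singularity $x$. Here $T^1_{Y,x}=\CC[x_0,\dots,x_3]/J_f$ has a nonzero degree-$4$ piece, spanned by the Hessian. On the other hand, $\TT^1_Y$ is a quotient of $H^0(\mathcal{O}_Y(3))$, and dehomogenized cubic forms only reach degrees $\le 3$. Hence $H^2(Y,T_Y)\neq 0$. So the reflexive Nakano vanishing $H^2(Y,\Omega^{[n-1]}_Y\otimes\omega_Y^{-1})=0$ you propose is false. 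Likewise, your assertion that the local $\mathcal{E}xt$ terms do not affect the degrees $n-i\le n-2$ fails exactly at $i=2$, where $H^0(T^1_Y)$ enters through the map above.

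The repair is to bypass $T_Y$ entirely. For reduced lci $Y$ one has $\LL_Y\simeq\Omega^1_Y$, and Serre duality on the Gorenstein variety $Y$ gives directly $\TT^i_Y=\Ext^i(\Omega^1_Y,\mathcal{O}_Y)\cong H^{n-i}(Y,\Omega^1_Y\otimes\omega_Y)^\vee$. This is how the paper begins.

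\textbf{The key vanishing is not established.} What remains is $H^j(Y,\Omega^1_Y\otimes\omega_Y)=0$ for $j\le n-2$, and neither of your routes proves it.

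Your hyperplane-section induction breaks down at $j=n-2$, which is exactly the degree controlling $\TT^2_Y$. After Kodaira disposes of the $\mathcal{O}_H(-H)$ term, you would need $H^{n-2}(H,\Omega^1_H\otimes L^{-k})=0$ on the smooth $(n-1)$-fold $H$. But $1+(n-2)=\dim H$ lies outside the Akizuki--Nakano range, and this group is nonzero in general. For example, it is $16$-dimensional for a smooth quartic surface $H\subset\mathbb{P}^3$ with $L=\mathcal{O}(1)$ and $k=1$.

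Your fallback via $\DB^1_Y$ is the paper's route, and the comparison you call delicate is the whole argument. It runs as follows:
\begin{enumerate}
\item $\H^0(\DB^1_Y)\cong\Omega^1_Y$, because $\Omega^1_Y$ is reflexive and $\H^0(\DB^1_Y)$ is torsion-free.
\item Steenbrink's vanishing of the Du Bois invariants of isolated lci singularities gives $\H^q(\DB^1_Y)=0$ for $0<q<n-2$.
\item In the spectral sequence $H^p(\H^q(\DB^1_Y)\otimes\omega_Y)\Rightarrow\HH^{p+q}(\DB^1_Y\otimes\omega_Y)$, the sheaves $\H^q(\DB^1_Y)$ with $q>0$ are supported at points. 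So the only differential that can hit $E^{j,0}=H^j(\Omega^1_Y\otimes\omega_Y)$ is $d_j\colon E_j^{0,j-1}\to E_j^{j,0}$, whose source vanishes for $2\le j\le n-2$. Hence $H^j(\Omega^1_Y\otimes\omega_Y)\hookrightarrow\HH^j(\DB^1_Y\otimes\omega_Y)$ for $j\le n-2$.
\item The target vanishes by the dual Nakano-type vanishing for Du Bois complexes, which applies here because $\mathrm{lcdef}(Y)=0$ for lci $Y$ (Popa--Shen).
\end{enumerate}
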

For an extension outside of the lci case, see Theorem \ref{thm: unobstrnonlci}. 
\subsection{Related work}
For a history of the problem of smoothing Fano and Calabi-Yau threefolds we refer the reader to \cite{friedman2025deformations}. We mention here some theorems which are relevant to the results of this paper. For Calabi-Yau threefolds, Namikawa-Steenbrink show that $\QQ$-factoriality  is an appropriate global assumption that guarantees the existence of smoothings:

\begin{theorem}[\cite{namikawa1995global}]\label{thm: NSQfact}
 Suppose $Y$ is a projective Calabi-Yau threefold with rational isolated hypersurface singularities. Then the following hold:
 \begin{enumerate}
     \item $Y$ can be deformed to a Calabi-Yau with only ODPs.
     \item  Assume also that $Y$ is $\QQ$-factorial. Then $Y$ is smoothable.
 \end{enumerate}

\end{theorem}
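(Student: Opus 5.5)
The statement is Namikawa--Steenbrink's theorem for Calabi--Yau threefolds. I would recover it from Theorem \ref{thm: introlcinot1DB} and Theorem \ref{thm: introlcigeneral}, specialised to $n=3$.

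\textbf{Part (1).} Rational isolated hypersurface singularities in dimension $3$ are Du Bois and lci, so Theorem \ref{thm: introlcinot1DB} applies. It deforms $Y$ to a Calabi--Yau threefold whose singularities are all $1$-Du Bois. Along the way I would check two things: that the deformation can be taken inside the class of rational hypersurface singularities, and that rationality is preserved under small deformation. Both hold, since rationality is an open condition and being a hypersurface singularity is open.

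The key local input is that an isolated $1$-Du Bois hypersurface singularity in dimension $3$ is an ordinary double point. I would get this from the minimal exponent: $1$-Du Bois means $\tilde\alpha\ge 2$. For an isolated hypersurface singularity in $4$ variables, $\tilde\alpha\ge 2 = n/2+\ldots$ only occurs for the quadratic singularity. Indeed, any singularity with $2$-jet of corank $\ge 1$ has $\tilde\alpha<2$, for example $x^2+y^2+z^2+w^3$ has $\tilde\alpha = 3/2+1/3<2$. This gives (1).

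\textbf{Part (2).} Now assume $Y$ is $\QQ$-factorial. Using (1), and that $\QQ$-factoriality is preserved in small deformations of rational isolated threefold singularities (a result of Kawamata/Namikawa type), I would reduce to the case where $Y$ has only ODPs.

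ODPs in dimension $3$ are $1$-Du Bois, so they are $1$-liminal and not $1$-rational. Hence once the global hypothesis of Theorem \ref{thm: introlcigeneral} holds, its second part gives a smoothing.

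It remains to show that $\QQ$-factoriality implies $\underline{h}^{2,2}(Y)=\underline{h}^{1,1}(Y)$. I would compute these numbers via a small or big resolution $\pi:\widehat Y\to Y$ with exceptional curves/divisors over the nodes, using the mixed Hodge structures on $H^4(Y)$ and $H^2(Y)$.

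For rational isolated singularities, $\underline{h}^{1,1}=h^{1,1}$ of $\Gr_F H^2(Y)$ and $H^2(Y)$ is pure. Also $\underline{h}^{2,2}$ computes $\dim \Gr_F^2 H^4(Y)$. By the local cohomology sequence, $H^4(Y)$ differs from the pure part by the defect $\sigma(Y)=b_4-b_2$. $\QQ$-factoriality is equivalent to $\sigma(Y)=0$, i.e. $\operatorname{rk} H_4 = \operatorname{rk} H^2$ (Namikawa--Steenbrink).

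\textbf{Main obstacle.} The hardest step is identifying the Hodge--Du Bois numbers precisely with these topological ranks. This requires showing that $\mathbb{H}^q(Y,\DB^p_Y)=\Gr_F^pH^{p+q}(Y)$ by Du Bois degeneration, and that the relevant pieces of $H^4$ and $H^2$ are of type $(2,2)$ and $(1,1)$. I would use $W$-weight arguments with the link of each node, $S^2\times S^3$.
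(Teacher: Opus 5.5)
Your part (1) is the paper's argument. Theorem \ref{thm: introlcinot1DB} deforms $Y$ to a Calabi--Yau threefold with only $1$-Du~Bois singularities. Your minimal-exponent argument ($\tilde\alpha_f\le 2$ for $f$ in four variables, with equality only for a nondegenerate quadratic singularity) correctly justifies the paper's remark that the singular $1$-Du~Bois (i.e.\ $1$-liminal) isolated hypersurface threefold points are exactly the ODPs.

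For part (2) you take a detour the paper does not need, and it is the weak point of the proposal.

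\textbf{The unsupported step.} Reducing to the nodal model requires that $\QQ$-factoriality survive the deformation. You attribute this to an unspecified ``Kawamata/Namikawa type'' result, but the openness statement usually quoted (Koll\'ar--Mori) assumes terminal singularities. Rational isolated hypersurface singularities need not be terminal, e.g.\ the cone $x^3+y^3+z^3+w^3=0$. The claim does hold here: $H^2$ does not change, and $H^4(Y)\to H^4(Y_t)$ is onto because the local nearby fibres are $3$-dimensional Stein spaces and so have no $H^4$. Hence $b_4(Y_t)\le b_4(Y)=b_2(Y)=b_2(Y_t)\le b_4(Y_t)$, using hard Lefschetz on $IH^*$ for the last inequality.

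\textbf{The simpler route.} It is easier to drop the reduction altogether. Theorem \ref{thm: introlcigeneral} applies to $Y$ with its original singularities. A $1$-rational isolated lci threefold singularity is smooth, so no singular point of $Y$ is $1$-rational, and the second half of that theorem gives smoothability of $Y$ directly (this is Corollary \ref{cor: 3foldODP}). The only remaining input is that $\QQ$-factoriality gives $\underline{h}^{2,2}(Y)=\underline{h}^{1,1}(Y)$, which the paper takes from \cite[Theorem E]{park2025q}.

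\textbf{The Hodge-number identification.} Your plan here is right in spirit, but it need not involve nodes or their links. For a Calabi--Yau threefold with rational isolated singularities one has $\underline{h}^{2,0}=\underline{h}^{0,2}=\underline{h}^{3,1}=\underline{h}^{1,3}=0$. So $E_1$-degeneration gives $\underline{h}^{1,1}=b_2$ and $\underline{h}^{2,2}=b_4$, and Namikawa--Steenbrink's criterion ($\QQ$-factorial iff $b_4=b_2$) finishes.

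One correction: $H^4(Y)$ is pure here, since it equals $IH^4(Y)$ for isolated singularities. So the defect is not a deviation of $H^4$ from purity; it is the cokernel of $H^2(Y)\hookrightarrow IH^2(Y)$.
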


Still for threefolds, in the case of lci singularities, Gross proves $Y$ is smoothable if it has a crepant resolution and no ODPs (ordinary double points). More precisely, he shows:

\begin{theorem}[\cite{gross1997deforming}]\label{thm: Grosslci}

Let $Y$ be a compact analytic threefold with isolated rational lci singularities, with $\omega_Y\cong \mathcal{O}_Y,$ and admitting a K\"ahler resolution  $\Yhat \to Y$ such that $\omega_{\Yhat} \cong \mathcal{O}_{\Yhat}.$ Then there is a deformation of $Y$ which smooths all
singular points of $Y$ except possibly the ordinary double points of $Y.$ In particular,
if $Y$ has no ordinary double points, then $Y$ is smoothable.
\end{theorem}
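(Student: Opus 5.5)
The plan is to follow the Namikawa--Steenbrink strategy in the form Gross uses for lci singularities. The argument has three steps: global deformations of $Y$ are unobstructed; the obstruction to realizing prescribed local smoothing directions globally is identified with a map into a local cohomology group; and at non-ODP points there are enough smoothing directions killed by that map.

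\textbf{Step 1: unobstructedness.} Since $Y$ has isolated lci singularities, the local $\mathbb{T}^2$ sheaf vanishes. Since $\omega_Y\cong\mathcal O_Y$ and the singularities are rational, the $T^1$-lifting criterion of Ran--Kawamata applies, via the Hodge-theoretic degeneration argument for the total spaces of infinitesimal deformations. Together these give that $\mathrm{Def}(Y)$ is smooth. It then suffices to produce a \emph{first-order} deformation whose image in $H^0(Y,\mathbb{T}^1_Y)=\bigoplus_x \mathbb{T}^1_{Y,x}$ is a smoothing direction at every non-ODP point $x$. For lci singularities, a general element of $\mathbb{T}^1_{Y,x}$, or even a general element of a suitable subspace, is a smoothing direction. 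Hence a general one-parameter deformation tangent to such a vector smooths those points.

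\textbf{Step 2: the local-to-global sequence.} Consider
\[
\mathbb{T}^1_Y \to H^0(Y,\mathbb{T}^1_Y)\to H^2(Y,T^0_Y).
\]
Since $\omega_Y$ is trivial, $T^0_Y\cong \Omega^{[2]}_Y$, so the obstruction lives in $H^2(Y,\Omega^{[2]}_Y)$. The connecting map factors through the local cohomology $\bigoplus_x H^2_x(Y,\Omega^{[2]}_Y)$. I would next use the crepant Kähler resolution $\pi\colon\Yhat\to Y$. Because the singularities are rational and Gorenstein, they are compound Du Val (Reid), and $\pi$ can be taken to be small near each non-ODP point where possible. Using Hodge theory on $\Yhat$ and duality, the composite map lands in the span of the classes $[C_{i}]$ of the exceptional curves. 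It therefore suffices to show that, for each non-ODP point $x$, the map $\mathbb{T}^1_{Y,x}\to \bigoplus_i \CC\,[C_{i}]^\vee$ (local exceptional curve classes) has a kernel containing a smoothing direction.

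\textbf{Step 3: the main obstacle.} This local statement is the key step. My approach would be to use the explicit cDV description $f(x,y,z)+t\,g=0$ and Gross's computation that the image of the local obstruction map of a first-order deformation $g\in\mathbb{T}^1_{Y,x}=\mathcal O/(f,J_f)$ is governed only by the image of $g$ in $\mathcal O/\mathfrak m$-type leading terms. At an ODP, the constant direction $g=1$ is the only smoothing direction, and it may pair nontrivially with the vanishing-cycle class. At a non-ODP cDV point, by contrast, $\mathfrak m_x\mathbb{T}^1_{Y,x}$ already contains smoothing directions: a general element of $\mathfrak m_x$ gives a smoothing because the singularity is not a node. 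These directions map to zero in the curve-class space. Choosing such a direction at every non-ODP point, and $0$ at the ODPs, gives a section of $H^0(\mathbb{T}^1_Y)$ in the kernel. This section therefore lifts to $\mathbb{T}^1_Y$, and Step 1 turns it into an actual deformation smoothing all non-ODP points. Verifying the vanishing of the pairing on $\mathfrak m_x\mathbb{T}^1_{Y,x}$, via a local computation of the Hodge/mixed Hodge structure on the link, is where I expect the real work to be.
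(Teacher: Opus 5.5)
Step 1 is fine, but Steps 2--3 do not work, for three reasons.

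(i) The reduction to cDV points is false. Rational Gorenstein threefold points are canonical, and Reid's dichotomy also allows an elliptic general hyperplane section. For example, the cone $x^3+y^3+z^3+w^3=0$ over a smooth cubic surface is rational and not cDV. It has a crepant projective resolution (blow up the vertex), but that resolution is divisorial, so there are no exceptional curve classes to land in. Moreover, for lci points that are not hypersurfaces, the description $T^1_{Y,x}=\mathcal O/(f,J_f)$ is not available.

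(ii) The claim that $m_xT^1_{Y,x}$ maps to zero in the local curve-class space fails even for cDV points with a small resolution $\widehat X'_x$. The map from $T^1_{Y,x}$ to that space is surjective. It is the composite of $T^1_{Y,x}\twoheadrightarrow H^2_{E'_x}(\Omega^2_{\widehat X'_x})$, which is onto because $H^2(\widehat X'_x,\Omega^2)=0$, with the dual of the injection $H^2(\widehat X'_x,\CC)\hookrightarrow H^1(\widehat X'_x,\Omega^1)$ given by Chern classes. Since $T^1_{Y,x}$ is cyclic, $m_xT^1_{Y,x}$ is a hyperplane. Its image therefore has codimension at most one, and is nonzero as soon as there are two exceptional curves, e.g. for $uv=z^3+w^3$.

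(iii) Even a liftable class in $m_xT^1_{Y,x}$ is the wrong kind of class. Lemma \ref{lemma: smoothinglci} needs the image of $\TT^1_Y$ to lie \emph{outside} $m_xT^1_{Y,x}$. For a hypersurface germ, $m_xT^1_{Y,x}$ is the tangent cone of the discriminant. Unobstructedness only gives you some arc with your prescribed tangent vector, not the linear family $f+tg$. Some vectors in $m_xT^1_{Y,x}$ are tangent to arcs lying inside the discriminant. An example is $[xyz]$ for the cubic cone, which is tangent to the equisingular family $x^3+y^3+z^3+w^3+t\,xyz$. So the first-order class alone does not decide whether $x$ is smoothed.

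What is missing is the right quotient of $T^1_{Y,x}$, together with the Hodge-theoretic input that makes it globally liftable. On a log resolution, $T^1_{Y,x}$ surjects $\mathcal O_{Y,x}$-linearly onto $H^2_{E_x}(\Omega^2_{\Xhat_x}(\log E_x))$. This quotient is nonzero exactly when $x$ is not $1$-Du Bois, i.e., in dimension three, exactly when $x$ is singular and not an ODP.

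Proposition \ref{prop:0map} shows that $H^2_E(\Omega^2_{\Yhat}(\log E))\to H^2(\Omega^2_{\Yhat}(\log E))$ vanishes. Dually, this follows from the spectral sequence for $\DB^1_Y$, Steenbrink's vanishing of Du Bois invariants at lci points, and the surjectivity of $H^1(\H^0(\DB^1_Y))\to\HH^1(\DB^1_Y)$ coming from $E_1$-degeneration. Consequently $\TT^1_Y$ surjects onto $\bigoplus_x H^2_{E_x}(\Omega^2_{\Xhat_x}(\log E_x))$.

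At each non-ODP point, lift a class that is not in $m_x$ times this quotient. This gives $\theta\in\TT^1_Y$ whose image in $T^1_{Y,x}$ lies outside $m_xT^1_{Y,x}$, and Lemma \ref{lemma: smoothinglci} together with unobstructedness finishes the argument. This is how the paper recovers Gross's theorem, as the case $n=3$ of Theorem \ref{thm: lcinot1DB}. It uses neither the crepant resolution nor any cDV or small-resolution structure.
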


In higher dimensions, there has been recent progress on smoothing Calabi-Yau varieties with isolated rational hypersurface singularities. Indeed, Friedman-Laza \cite{friedman2025deformations} give smoothing results for Calabi-Yau varieties $Y$ whose singularities are either strongly 1-irrational or are 1-Du Bois but not 1-rational (also known as 1-liminal). Note that the strongly 1-irrational condition is stricter than the not 1-Du Bois condition, and it involves another technical assumption on the splitting of a certain short exact sequence (see \cite[Definition 2.6.(iv)]{friedman2025deformations}).  Denote by $Z'$ the union of the 1-liminal singularities and by $Z''$ the union of the strongly 1-irrational ones. Let $L$ be the union of the links of each singular 1-liminal point. If $x$ is a $1$-liminal hypersurface singularity, then by \cite[Section 4.11]{dimca2012koszul} (see also \cite[Corollary 6.14]{friedman2024higheriso}),  $\dim_{\CC} \Gr_F^{n-1}H^n(L_x) = 1$, and let $\varepsilon_x$ be a generator.

 \begin{theorem}\label{thm: FL}\textup{(\cite[Theorem 5.8.]{friedman2025deformations})} Let $Y' \to Y$ be a resolution over the points of $Z''$, so that the singular locus of $Y'$ is $Z'$. Let $\phi \colon   H^n(L) \to H_{n-1}(Y').$
  Finally, suppose  that, for every $x\in Z'$, there exists a nonzero $a_x\in \CC$   such that 
\begin{equation}\label{eq: homclasssum}
\sum_{x\in Z'}a_x\phi(\varepsilon _x) =0\in H_{n-1}(Y').
\end{equation}
 
Then there exists a global smoothing.   
 \end{theorem}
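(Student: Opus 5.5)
The statement is Friedman–Laza's Theorem \ref{thm: FL}. The plan is the $T^1$-lifting / first-order smoothing strategy: first build a first-order deformation of $Y$ that smooths every singular point to first order, then use unobstructedness to integrate it to an actual one-parameter smoothing.

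\emph{Unobstructedness.} Since $\omega_Y\cong\mathcal O_Y$ and $H^1(\mathcal O_Y)=0$, I would prove that deformations of $Y$ are unobstructed. I would use the $T^1$-lifting criterion, with the same Hodge-theoretic input (the $1$-Du Bois and $1$-rational type vanishing, and the degeneration of the Hodge-to-de Rham spectral sequence for the Du Bois complex) that is used later in the paper. The aim is to show that $\TT^1$ of the relative deformations over $\operatorname{Spec}\CC[t]/t^{k+1}$ is free. In the rational hypersurface case this is the argument of Namikawa and Friedman–Laza.

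\emph{Reduction to first order.} By the local structure of hypersurface singularities, it suffices to find a first-order deformation $\xi\in\TT^1_Y$ with two properties:
\begin{itemize}
\item its image in $H^0(T^1_Y)=\bigoplus_x T^1_{Y,x}$ is a first-order smoothing direction at each $x$, that is, it does not lie in the discriminant tangent cone;
\item it lifts to all orders, which is automatic once unobstructedness is known.
\end{itemize}
A generic one-parameter deformation tangent to such a $\xi$ then smooths every point. For the points of $Z''$ (strongly $1$-irrational), the splitting hypothesis of \cite[Definition 2.6]{friedman2025deformations} says that the image of $\TT^1_Y\to T^1_{Y,x}$ already contains smoothing directions. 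I would verify this by working on the partial resolution $Y'$ and comparing local cohomology of $Y$ with that of $Y'$ near $Z''$.

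\emph{The $1$-liminal points.} These are the main obstacle. Here the cokernel of $\TT^1_Y\to H^0(T^1_Y)$ embeds in $H^2(Y,T^0_Y)$. Via $T^0_Y\cong\Omega^{n-1}_Y$ (using the Calabi–Yau condition) and duality, the obstruction to hitting a given local direction is measured by the local classes $\varepsilon_x\in\Gr_F^{n-1}H^n(L_x)$.

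Concretely, I would use the local-to-global exact sequence
\[
H^0(T^1_Y)\to H^2_{Z'}(Y',\Omega^{n-1})\to H^2(Y',\Omega^{n-1}).
\]
In this sequence the component of the image of a local smoothing direction at $x$ is a nonzero multiple of $\varepsilon_x$. By Hodge theory (a mixed Hodge structure argument on $H^n(L)\to H^{n+1}_{Z'}(Y')\to H^{n+1}(Y')\cong H_{n-1}(Y')$), the map to $H^2(Y',\Omega^{n-1})$ identifies with the $\Gr_F^{n-1}$-part of $\phi$.

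The relation \eqref{eq: homclasssum}, $\sum_x a_x\phi(\varepsilon_x)=0$ with all $a_x\neq 0$, therefore says that the tuple of local smoothing directions scaled by $a_x$ maps to zero globally, and hence lifts to some $\xi\in\TT^1_Y$. Combining this with the $Z''$ directions, which carry no obstruction, gives the desired $\xi$. Unobstructedness then yields a global smoothing.

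The delicate point is the compatibility of the Hodge filtration with $\phi$, and checking that $1$-liminality makes $\Gr_F^{n-1}$ one-dimensional and exactly detected by the smoothing direction. For this I would follow \cite{friedman2024higheriso}.
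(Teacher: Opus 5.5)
\textbf{There is a genuine gap: both lifting steps are asserted rather than proved, and the first rests on a misreading of the hypothesis.}

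Your outline matches the architecture of the paper's proof of the stronger Theorem \ref{thm: 1limlci}: lift first-order smoothing directions to $\TT^1_Y$, treat $Z''$ and $Z'$ separately, take a general linear combination, and integrate using unobstructedness. For unobstructedness you can simply cite Theorem \ref{thm: unobstructed}.

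\textbf{The $Z''$ step.} Strong $1$-irrationality is a condition on the germ $(Y,x)$. It says nothing about the image of the \emph{global} map $\TT^1_Y\to T^1_{Y,x}$. Comparing local cohomology of $Y$ and $Y'$ near $Z''$ cannot show that the $Z''$ directions ``carry no obstruction'' in $H^2(Y,T^0_Y)$.

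The missing ingredient is a global Hodge-theoretic vanishing, Proposition \ref{prop:0map}: the map $\bigoplus_x H^2_{E_x}(\Omega^{n-1}_{\Xhat_x}(\log E_x))\to H^2(\Yhat,\Omega^{n-1}_{\Yhat}(\log E))$ is zero. Dually, this says $H^{n-2}(\Yhat,\Omega^1_{\Yhat}(\log E)(-E))\to H^0(R^{n-2}\pi_*\Omega^1_{\Yhat}(\log E)(-E))$ vanishes. The paper gets this from three inputs:
\begin{itemize}
\item the Leray spectral sequence for $\DB^1_Y\simeq\mathbf R\pi_*\Omega^1_{\Yhat}(\log E)(-E)$;
\item Steenbrink's vanishing of the Du~Bois invariants;
\item the surjectivity $H^{n-2}(Y,\H^0\DB^1_Y)\to\HH^{n-2}(Y,\DB^1_Y)$ from \cite{popa2024injectivity}, which rests on $E_1$-degeneration.
\end{itemize}
Then exactness of the local cohomology sequence on $\Yhat$ lifts any $\beta'$ with $\beta'_x\notin m_xH^2_{E_x}(\Omega^{n-1}_{\Xhat_x}(\log E_x))$ to $\TT^1_Y=H^1(V,\Omega^{n-1}_V)$. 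Since $T^1_{Y,x}\to H^2_{E_x}(\cdots)$ is $\mathcal O_{Y,x}$-linear, the lift's image lies outside $m_xT^1_{Y,x}$. Only ``not $1$-Du~Bois'' is used here; the splitting hypothesis plays no role (Theorem \ref{thm: lcinot1DB}).

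\textbf{The $1$-liminal step.} Your sequence $H^0(T^1_Y)\to H^2_{Z'}(Y',\Omega^{n-1})\to H^2(Y',\Omega^{n-1})$ governs lifting a \emph{full} local class. But $T^1_{Y,x}$ is usually much bigger than the line $\Gr_F^{n-1}H^n(L_x)$. Nothing you say controls where the kernel $m_xT^1_{Y,x}$ (the image of $H^1(\Omega^{n-1}_{\Xhat_x}(\log E_x)(-E_x))$) goes in $H^2(Y,T^0_Y)$. So identifying the obstruction map with the $\Gr_F^{n-1}$-part of $\phi$ presupposes a factorization you have not shown. In particular, a fixed tuple of smoothing directions is not guaranteed to lift; only some element congruent to it modulo $m_xT^1_{Y,x}$ is.

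The paper avoids this by using the local cohomology sequence of $\Omega^{n-1}_{\Yhat}(\log E)(-E)$ instead:
\[
\TT^1_Y\to H^2_E(\Omega^{n-1}_{\Yhat}(\log E)(-E))\xrightarrow{h'}H^2(\Yhat,\Omega^{n-1}_{\Yhat}(\log E)(-E)).
\]
Then only the quotient $H^2_{E_x}(\Omega^{n-1}_{\Xhat_x}(\log E_x)(-E_x))\cong\Gr_F^{n-1}H^n(L_x)\cong T^1_{Y,x}/m_xT^1_{Y,x}$ has to be hit. The hypothesis, in the coherent form $\varphi(\alpha)=0$ in $H^2(\Omega^{n-1}_{\Yhat})$, gives $h'(\alpha)=0$ via the injection $H^2(\Omega^{n-1}_{\Yhat}(\log E)(-E))\ito H^2(\Omega^{n-1}_{\Yhat})$ of \cite[Lemma 1.3]{FLklim}.

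To use the hypothesis in the form you state it, you would additionally need that the relation in $H_{n-1}(Y')$ forces this coherent vanishing. That is exactly the compatibility you flag as delicate but do not establish.
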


In fact, we can strengthen this result as follows. First, we set some notation. Let $L_x$ denote the link of a singular point $x\in Y$ and consider $\pi:\Yhat\to Y$ a strong log resolution of $Y.$ There is a natural map
$$\varphi:\bigoplus\limits_{x \text{ is }1-\text{liminal}} \Gr^{n-1}_FH^n(L_x)  \to  H^{2}(\Omega_{\Yhat}^{n-1})$$ which we describe in the course of the proof. 
\begin{theorem}\label{thm: 1limlci}
Suppose $Y$ is a singular Calabi-Yau variety of dimension $n$ with isolated Du Bois lci singularities. Suppose there exists an element $\alpha=(\alpha_x)\in \bigoplus\limits_{x \text{ is }1-\text{liminal}} \Gr^{n-1}_FH^n(L_x)$ which is nonzero in each factor and such that $\varphi(\alpha)=0$ in $H^{2}(\Omega_{\Yhat}^{n-1}).$ Then $Y$ can be deformed to a Calabi-Yau whose singularities are 1-rational. Moreover, if one assumes in addition that none of the singularities of $Y$ are 1-rational, then $Y$ is smoothable, and every small smoothing is Calabi-Yau.   
\end{theorem}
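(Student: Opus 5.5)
The strategy follows the standard Friedman–Laza approach. First, show that deformations of $Y$ are unobstructed. Second, show that the local-to-global map on first-order deformations, restricted to suitable directions, surjects onto a subspace of $\bigoplus_x \TT^1_{Y,x}$ that contains a "first-order smoothing direction" at each point which is not 1-rational. Third, invoke the local results: a deformation whose image in $\TT^1_{Y,x}$ is sufficiently general in the relevant graded piece improves the singularity at $x$.

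\textbf{Step 1: unobstructedness.} Since $\omega_Y\cong\mathcal O_Y$, $H^1(\mathcal O_Y)=0$, and the singularities are isolated lci and Du Bois, I would use the $T^1$-lifting criterion as in the proof of Theorem~\ref{thm: lcigeneral}. This reduces to identifying $\TT^1_Y$ with $\mathbb H^1(Y,\Omega^{n-1}\text{-type complex})$ via $\Theta_Y\cong \Omega^{[n-1]}_Y$. The key input is that for isolated lci Du Bois singularities $\mathbb H^i(Y,\DB^{n-1}_Y)$ is governed by the Hodge–Du Bois theory, so the relevant Hodge-theoretic maps are strict and the lifting property holds.

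\textbf{Step 2: the local-to-global sequence.} The sequence is
\[
\TT^1_Y \to H^0(Y,T^1_Y) \to H^2(Y,T^0_Y)\cong H^2(Y,\Omega^{[n-1]}_Y).
\]
For a 1-Du Bois point that is not 1-rational (1-liminal), there is a distinguished quotient of $T^1_{Y,x}$, which for hypersurfaces is the top graded piece of the Jacobian ring. This quotient is identified with $\Gr^{n-1}_F H^n(L_x)$ (one-dimensional in the hypersurface case). The paper's map $\varphi$ is constructed so that the composite
\[
H^0(T^1_Y)\to \bigoplus_x \Gr^{n-1}_FH^n(L_x)\to H^2(\Omega^{n-1}_{\Yhat})
\]
is compatible with the obstruction map $H^0(T^1_Y)\to H^2(\Omega^{[n-1]}_Y)$ up to an injection. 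I would build $\varphi$ from the local cohomology sequence $H^2(\Omega^{[n-1]}_Y)\to H^2(\Yhat\setminus E,\Omega^{n-1})$ and the identification of $H^1_E$-terms with link cohomology. The key step is to show that $\ker\bigl(H^2(\Omega^{[n-1]}_Y)\to H^2(\Omega^{n-1}_{\Yhat})\bigr)$ does not meet the image of these link classes, using the Du Bois hypothesis and the Steenbrink vanishing for depth. I expect this injectivity/compatibility to be the main obstacle, especially without the hypersurface assumption: I would need the lci version of the identification $\Gr^{n-1}_FH^n(L_x)\cong$ a quotient of $T^1_{Y,x}$, presumably via the paper's earlier local results for isolated lci 1-liminal singularities.

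\textbf{Step 3: conclusion.} Given $\alpha$ with $\varphi(\alpha)=0$, the above shows that a lift of $\alpha$ to $H^0(T^1_Y)$, adjusted by the image of the non-1-Du Bois points, lies in the image of $\TT^1_Y$. The points that are not 1-Du Bois need no global condition, as in Theorem~\ref{thm: lcinot1DB}. This produces a first-order deformation whose local component at every non-1-rational point is nonzero in the top graded piece. By unobstructedness it extends to a one-parameter deformation $\Y\to\Delta$. By the local result that such a first-order direction is "strictly smoothing" (or at least makes nearby fibers 1-rational or smooth at $x$), together with openness of these properties, the general fiber has only 1-rational singularities. The triviality of $\omega$ and the vanishing of $H^1(\mathcal O)$ persist for small deformations, so the fibers remain Calabi–Yau.

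If no singularity is 1-rational, I would iterate. A singularity of the general fiber would have to be 1-rational, but it arises only from points where the specialization has smaller Milnor/Tjurina-type invariants. Comparing dimensions of $\TT^1$ along the family as in Theorem~\ref{thm: lcinot1DB}, the nearby singularities created are either absent or again non-1-rational, and the argument repeats. Finiteness of the invariant then yields a smoothing.
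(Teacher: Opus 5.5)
Your outline has the right overall shape: unobstructedness, then a global class that is nonzero in the link piece at each 1-liminal point, combined with the class from Theorem \ref{thm: lcinot1DB}. However, the step that actually uses $\varphi(\alpha)=0$ is misidentified.

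You work with the obstruction map $H^0(T^1_Y)\to H^2(Y,\Omega^{[n-1]}_Y)$ and aim to show that the kernel of $H^2(\Omega^{[n-1]}_Y)\to H^2(\Omega^{n-1}_{\Yhat})$ ``does not meet the image of the link classes''. But $\bigoplus_x\Gr^{n-1}_FH^n(L_x)$ is a \emph{quotient} of $H^0(T^1_Y)$, not a subspace. Set $\G=\Omega^{n-1}_{\Yhat}(\log E)(-E)$ and let $K$ be the kernel of $H^0(T^1_Y)\to\bigoplus_x H^2_{E_x}(\G)$, i.e.\ the image of $\bigoplus_x H^1(\Xhat_x,\G)$. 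The obstruction map need not vanish on $K$, so that ``image'' is not defined. Read literally, your statement would force every lift of $\alpha$ to be unobstructed, which you cannot prove and which is not what is needed.

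What is needed is a containment: the kernel of $H^2(Y,T^0_Y)\to H^2(\Yhat,\G)$ must lie in the image of $K$ under the obstruction map. Then a lift of $\alpha$ can be corrected by an element of $K$, including at the 1-liminal points themselves, not only at the non-1-Du Bois points. The paper gets this directly from three facts:
\begin{enumerate}
\item[(i)] $\TT^1_Y\cong H^1(V,\Omega^{n-1}_V)$ \cite[Lemma 4.1]{friedman2025deformations}, so the local cohomology sequence $H^1(V,\Omega^{n-1}_V)\xto{h}H^2_E(\G)\xto{h'}H^2(\Yhat,\G)$ is exact.
\item[(ii)] At 1-liminal points $H^2_{E_x}(\G)\cong\Gr_F^{n-1}H^n(L_x)$, because $H^2_{E_x}(\Omega^{n-1}_{\Xhat_x}(\log E_x))=0$. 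This is exactly the lci identification you said you lacked; it is already in Section \ref{sec: 2.3}.
\item[(iii)] $H^2(\Yhat,\G)\to H^2(\Yhat,\Omega^{n-1}_{\Yhat})$ is injective \cite[Lemma 1.3]{FLklim}.
\end{enumerate}
Fact (iii) is the one Hodge-theoretic input missing from your plan. It converts $\varphi(\alpha)=0$ into $h'(\alpha)=0$, so $\alpha=h(\theta_2)$ for some $\theta_2\in\TT^1_Y$. For Step 1, simply quote Theorem \ref{thm: unobstructed}; your $T^1$-lifting sketch is not an argument as written.

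Step 3 also has a gap. ``Nonzero in the link piece'' must be upgraded to ``not in $m_xT^1_{Y,x}$''. This holds because multiplication by $\pi^*m_x$ sends $\Omega^{n-1}_{\Xhat_x}(\log E_x)$ into $\G$. Hence $m_x$ annihilates $H^2_{E_x}(\G)$, which at a 1-liminal point is the cokernel of $H^1(\G)\to H^1(\Omega^{n-1}_{\Xhat_x}(\log E_x))$.

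Now apply Gross's criterion, Lemma \ref{lemma: smoothinglci}, on the smooth base $\mathrm{Def}(Y)$ to a general linear combination of $\theta_2$ and the class $\theta_1$ from Theorem \ref{thm: lcinot1DB}. This shows the general fiber is \emph{smooth} near every non-1-rational point. The 1-rational points stay 1-rational by semicontinuity of the minimal exponent. Together these give both assertions at once, with no iteration.

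Your fallback, that the deformation ``makes nearby fibers 1-rational or smooth at $x$'', is too weak for the second assertion. The iteration you use to bridge the gap does not work. A non-1-rational singularity can deform to one with 1-rational singular points (e.g.\ ordinary double points when $n\ge 4$), after which the hypothesis that no singularity is 1-rational is lost. Your claim that newly created singularities are ``absent or again non-1-rational'' is unsupported. The precise criterion of Lemma \ref{lemma: smoothinglci} is what closes this step.
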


So far, all the results mentioned assume the singularities are rational. We state the following recent result of Friedman-Laza which involves a Calabi-Yau whose unique singular point is Du Bois but not rational.

\begin{theorem}\label{thm: FL0lim}\textup{(\cite[Theorem 0.1]{FL0lim})}
Let $Y$ be a strict $0$-liminal Calabi--Yau variety (see \cite[Definition 3.1]{FL0lim}) whose unique singular point is an isolated hypersurface singularity. Assume that either
\begin{enumerate}
    \item the singularity is weighted homogeneous, or
    \item $Y$ is $3$-dimensional and $H^2(E_i,\mathcal{O}_{E_i})=0$ for every irreducible component $E_i$ of the exceptional divisor of a log resolution.
\end{enumerate}
Then $Y$ admits a smoothing.
\end{theorem}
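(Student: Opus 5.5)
The plan is the standard two-step deformation-theoretic argument, in the spirit of \cite{friedman2025deformations}. First, show that deformations of $Y$ are unobstructed. Second, show that the image of the global-to-local map
\[
\TT^1_Y \to H^0(Y,T^1_Y)=T^1_{Y,x}
\]
contains a first-order smoothing direction of the unique singular point $x$. Since $\omega_Y\cong\mathcal O_Y$ and $x$ is an isolated hypersurface singularity, I would get unobstructedness from the $T^1$-lifting principle (Kawamata--Ran). The required Hodge-theoretic input is that $Y$ is Du Bois, so $H^i(\mathcal O_Y)\to H^i(\mathcal O_{Y_t})$ behaves well in families. Granting this, a general one-parameter deformation tangent to a smoothing direction smooths $x$, since a hypersurface singularity is smoothed by any deformation whose tangent vector is general in $T^1_{Y,x}$ modulo $\mathfrak m_x T^1_{Y,x}$.

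For the second step I would use the local cohomology sequence at $x$:
\[
\TT^1_Y \to H^0(T^1_Y)\xrightarrow{\ \delta\ } H^2_x(T^0_Y)\to H^2(Y,T^0_Y).
\]
Then $T^0_Y\cong\Omega^{n-1}$-type sheaves via $\omega_Y\cong\mathcal O_Y$, which identifies $H^2(T^0_Y)$ with a group computable on a log resolution $\pi:\Yhat\to Y$ with exceptional divisor $E$. The question becomes whether $\delta$ of a smoothing direction $\theta$ maps to zero in $H^2(T^0_Y)$. For a $0$-liminal singularity, Du Bois but not rational, the local invariant $\Gr_F^0H^n(L_x)$, equivalently $H^{n-1}(E,\mathcal O_E)$, is nonzero. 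I expect the strict $0$-liminal hypothesis, \cite[Definition 3.1]{FL0lim}, to say exactly that the corresponding class is one-dimensional and dies globally. So the plan is to show that the obstruction $\delta(\theta)$ lies in, or is detected by, this one-dimensional piece, and then use strictness to conclude that it vanishes.

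The main obstacle is this identification: one has to check that the relevant part of $H^2_x(T^0_Y)$ which sees $\theta$ factors through the $0$-liminal piece of the link cohomology, and not through other graded pieces on which we have no control. This is where the two alternative hypotheses enter.

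In case (1), with $x$ weighted homogeneous, I would use the $\CC^*$-action and its Euler vector field. This decomposes $T^1_{Y,x}$ and the local cohomology into weight spaces. A smoothing direction can be chosen of a single weight, namely the weight of the defining polynomial $f$ (the constant deformation $f+t$), and a weight computation then shows that $\delta(\theta)$ lands in the weight piece matching $\Gr_F^0H^n(L_x)$.

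In case (2), with $n=3$ and $H^2(E_i,\mathcal O_{E_i})=0$, I would instead compute directly on $\Yhat$. The vanishing of $H^2(\mathcal O_{E_i})$ kills the contributions of the individual components to $H^2(\mathcal O_E)$ via the Mayer--Vietoris spectral sequence. The only surviving part then comes from the dual complex, which is one-dimensional in the $0$-liminal case. This forces the obstruction into that piece, where strictness makes it vanish.
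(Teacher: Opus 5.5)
The paper does not prove this statement itself; it is quoted from \cite{FL0lim}. Within the paper it is recovered, without either hypothesis (1) or (2), as a special case of Theorem \ref{thm: lcinot1DB}, since a $0$-liminal point is Du Bois and lci but not $1$-Du Bois. Measured against that, your proposal has a genuine gap exactly at the step you flag as the main obstacle.

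Nothing in the proposal shows that the obstruction to lifting a smoothing direction is ``detected by'' a one-dimensional $0$-liminal piece. The weight computation in case (1) and the Mayer--Vietoris/dual-complex argument in case (2) are asserted rather than carried out. The final vanishing is drawn from a guessed content of \cite[Definition 3.1]{FL0lim}, so the argument does not close.

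The picture is also not the right one. The one-dimensional space you have in mind is $H^{n-1}(E_x,\mathcal O_{E_x})\cong\Gr^0_FH^{n-1}(L_x)$; note that $\Gr^0_FH^n(L_x)=H^n(E_x,\mathcal O_{E_x})=0$. The quotient of $T^1_{Y,x}$ that actually sees smoothing directions is $H^2_{E_x}(\Omega^{n-1}_{\Xhat_x}(\log E_x))$, dual to $H^{n-2}(\Omega^1_{\Xhat_x}(\log E_x)(-E_x))$. This is not one-dimensional in general: already for the cone over a smooth quartic surface it is $31$-dimensional. Your exact sequence is garbled as well. By Schlessinger $H^0(T^1_Y)\cong H^2_x(Y,T^0_Y)$, and the relevant sequence is $\TT^1_Y\cong H^1(V,T_V)\to H^2_x(Y,T^0_Y)\to H^2(Y,T^0_Y)$.

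The missing idea is Proposition \ref{prop:0map}: for projective $Y$ with isolated Du Bois lci singularities, the map $H^2_E(\Omega^{n-1}_{\Yhat}(\log E))\to H^2(\Omega^{n-1}_{\Yhat}(\log E))$ is zero. Dually, this is the vanishing of $E_\infty^{0,n-2}$ in the Leray spectral sequence of $\DB^1_Y\simeq\mathbf R\pi_*\Omega^1_{\Yhat}(\log E)(-E)$. That vanishing follows from two inputs:
\begin{enumerate}
\item Steenbrink's vanishing $b^{1,n-3}=0$ for $n>3$ \cite[Theorem 5]{steenbrink1997bois};
\item the surjectivity of $H^{n-2}(\H^0\DB^1_Y)\to\HH^{n-2}(\DB^1_Y)$ for Du Bois $Y$ \cite[Proposition 5.2]{popa2024injectivity}, which rests on $E_1$-degeneration.
\end{enumerate}

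Consequently $\TT^1_Y$ surjects onto the nonzero $\mathcal O_{Y,x}$-module quotient $H^2_{E_x}(\Omega^{n-1}_{\Xhat_x}(\log E_x))$ of $T^1_{Y,x}$. Take a global class whose image there is not in $m_x$ times that quotient; its image in $T^1_{Y,x}$ then lies outside $m_xT^1_{Y,x}$. Theorem \ref{thm: unobstructed} together with Lemma \ref{lemma: smoothinglci} (or Proposition \ref{prop: cyclicmod}) then produces the smoothing.

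In particular, you never need to kill the obstruction of a prescribed direction such as $f+t$. The obstruction vanishes on the whole quotient. The remaining ambiguity, $H^1(\Xhat_x,\Omega^{n-1}_{\Xhat_x}(\log E_x))=\ker\bigl(T^1_{Y,x}\to H^2_{E_x}(\Omega^{n-1}_{\Xhat_x}(\log E_x))\bigr)$, does not affect the smoothing criterion, which can be tested on the quotient.

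Finally, your unobstructedness sketch (Du Bois controlling $H^i(\mathcal O)$ in families) is not the input $T^1$-lifting needs for Calabi--Yau varieties; that input is base change for $H^1(V,\Omega^{n-1}_V)$. That step should simply cite Theorem \ref{thm: unobstructed}.
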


\begin{remark}
The fact that the previous theorems guarantee the existence of smoothings (and not just of a first-order smoothing) follows from a recent result of Friedman who shows deformations of $Y$ are unobstructed:
\end{remark}

\begin{theorem}\label{thm: unobstructed}\textup{(\cite[Theorem 1.6]{friedman2025unobstructed})}
Let $Y$ be a singular Calabi-Yau variety of dimension $n \ge 3$, such that the singularities of $Y$ are isolated, lci, and Du~Bois. Then the deformations of $Y$ are unobstructed.
\end{theorem}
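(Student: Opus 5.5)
The plan is to show that the obstruction spaces vanish, $\TT^i_Y=0$ for $i\ge 2$, or at least that the obstruction maps vanish, via a $T^1$-lifting argument in the style of Ran--Kawamata. Since $Y$ is lci with isolated singularities, the cotangent complex is perfect in amplitude $[-1,0]$. The local-to-global spectral sequence gives
\[
E_2^{p,q}=H^p(Y,\mathcal T^q_Y)\Rightarrow \TT^{p+q}_Y,
\]
with $\mathcal T^0_Y=T_Y$, with $\mathcal T^1_Y$ supported on the finite singular set, and with $\mathcal T^q_Y=0$ for $q\ge 2$. It follows that $\TT^2_Y$ is built from $H^2(Y,T_Y)$ and $H^1(Y,\mathcal T^1_Y)=0$. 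So the obstruction space is essentially $H^2(Y,T_Y)$, and directly showing that it vanishes is hopeless in general. I would therefore use the $T^1$-lifting criterion instead: deformations are unobstructed if, for every infinitesimal deformation $Y_k/A_k$ with $A_k=\CC[t]/t^{k+1}$, the map $\TT^1(Y_{k}/A_{k})\to \TT^1(Y_{k-1}/A_{k-1})$ is surjective.

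Since $\omega_Y\cong\mathcal O_Y$ and $Y$ is lci, the tangent sheaf satisfies $T_Y\cong \sHom(\Omega^1_Y,\omega_Y)$. More precisely, $\mathbf{R}\sHom(\LL_Y,\mathcal O_Y)$ is related by Grothendieck duality to the truncated de Rham-type complex $\Omega^{n-1}_Y$. The strategy of Friedman is to identify $\TT^1_{Y_k/A_k}$ with $\mathbb H^1$ of $\Omega^{n-1}_{Y_k/A_k}$, suitably modified by its torsion. Via the Du Bois hypothesis, this is then compared with the relative Du Bois complex $\DB^{n-1}_{Y_k/A_k}$. The key steps, in order, are as follows.

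(1) Show that $\omega_{Y_k/A_k}\cong\mathcal O_{Y_k}$. This uses $H^1(\mathcal O_Y)=0$ together with the fact that Du Bois singularities give $H^i(\mathcal O)$ deforming flatly.

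(2) Prove that, for isolated lci Du Bois singularities in dimension $n\ge3$, the natural map from $\sHom(\Omega^1_{Y_k/A_k},\mathcal O)$ to $\DB^{n-1}_{Y_k/A_k}$ has cone with controlled local cohomology. This uses depth estimates: lci gives depth $n$ for $\mathcal O$, and hence depth at least $n-1$ for $\Omega^{n-1}$ modulo torsion.

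(3) Use the Hodge-theoretic statement that $\mathbb H^q(Y_k,\DB^{p}_{Y_k/A_k})$ are free $A_k$-modules compatible with base change. This is the relative Du Bois analogue of Deligne's degeneration, via the mixed Hodge structure on the Milnor fibre/limit.

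Combining these, $\TT^1(Y_k/A_k)$ surjects onto $\TT^1(Y_{k-1}/A_{k-1})$, because a free module compatible with base change always lifts.

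The main obstacle is step (2)/(3): controlling the local contributions at the singular points. The issue is the difference between $H^0(\mathcal T^1_Y)$ and the local cohomology of $\Omega^{n-1}$ at the isolated points, which is governed by the link cohomology $\Gr_F H^n(L_x)$. To handle this, I would first use the Du Bois assumption, which makes $H^i_{x}(\DB^{n-1})$ vanish in the right range for $n\ge3$ (a result of Steenbrink-type vanishing for isolated singularities). Then I would use the lci assumption, which gives that $\mathcal T^1$ is a finite-length module whose relative versions are flat over $A_k$ by local unobstructedness of lci singularities. The local pieces thus lift automatically, and the global pieces lift by freeness. Hence $T^1$-lifting holds, and the deformations of $Y$ are unobstructed.
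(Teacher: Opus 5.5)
The paper does not prove this theorem. It quotes it from \cite[Theorem 1.6]{friedman2025unobstructed}, and only indicates (in the remark after its restatement in Section 4 and in the proof of Proposition~\ref{prop:0map}) that Friedman's proof is a $T^1$-lifting argument built on $\DB^1_Y$. So $T^1$-lifting is the right route. However, your proposal does not carry out the step that makes it work, and several claims used in its place are false.

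(a) $\TT^1_Y$ is $H^1(V,\Omega^{n-1}_V)$ on the smooth locus $V$, not $\HH^1$ of $\Omega^{n-1}_Y$ modulo torsion. The latter maps to $\TT^1_Y$ with image $H^1(Y,T_Y)=\ker(\TT^1_Y\to H^0(T^1_Y))$, so it misses exactly the local directions.

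(b) The depth claim in step (2) fails for $n\ge 4$. Since $\Omega^{[n-1]}_Y\cong T_Y$ and $H^2_x(T_Y)\cong T^1_{Y,x}\neq 0$ at every singular point, its depth there is exactly $2$. $\Omega^{n-1}_Y$ modulo torsion has the same $H^2_x$, hence depth at most $2$. The bound $\depth\ge n-1$ is Greuel's bound for $\Omega^1_Y$.

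(c) Relative $T^1$'s are not flat over $A_k$. For $x_1^2+\cdots+x_{n+1}^2=t$ over $A_k=\CC[t]/(t^{k+1})$ one finds $T^1_{X_k/A_k}=A_k/(t)$, which has length $1$. The local restriction maps are surjective, but that is not where the difficulty lies.

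The genuine gap is your last paragraph. Consider the exact sequence
\[
0\to H^1(T_{Y_k/A_k})\to\TT^1(Y_k/A_k)\to H^0(T^1_{Y_k/A_k})\to H^2(T_{Y_k/A_k}).
\]
Surjectivity on the local term and freeness of some Du Bois hypercohomology do not yield surjectivity of $\TT^1(Y_k/A_k)\to\TT^1(Y_{k-1}/A_{k-1})$. What matters is which local classes are realized globally, i.e.\ the kernel of the connecting map to $H^2$, and this global--local question is the whole content of the theorem.

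The input the paper points to lives on the $\Omega^1$ side, where $\TT^1_Y\cong H^{n-1}(Y,\Omega^1_Y)^\vee$:
\begin{itemize}
\item $\Omega^1_Y\cong\H^0(\DB^1_Y)$;
\item $\H^q(\DB^1_Y)=0$ for $q>0$, $q\neq n-2$, by Theorem~\ref{thm: duboisinv} plus Steenbrink's extra vanishing. This includes $\H^{n-3}(\DB^1_Y)=0$, on which \cite[Lemma 2.6]{friedman2025unobstructed} rests;
\item $H^{n-2}(Y,\Omega^1_Y)\to\HH^{n-2}(Y,\DB^1_Y)$ is surjective, by the Du Bois hypothesis and $E_1$-degeneration \cite[Proposition 5.2]{popa2024injectivity}, \cite[Remark 2.5]{friedman2025unobstructed}.
\end{itemize}
Together these force the one possibly nonzero differential of $H^p(\H^q\DB^1_Y)\Rightarrow\HH^{p+q}(\DB^1_Y)$, namely $H^0(\H^{n-2}\DB^1_Y)\to H^{n-1}(Y,\Omega^1_Y)$, to be injective. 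This gives
\[
0\to\bigoplus_{x}H^{n-2}(\Omega^1_{\Xhat_x}(\log E_x)(-E_x))\to H^{n-1}(Y,\Omega^1_Y)\to\HH^{n-1}(Y,\DB^1_Y)\to 0,
\]
whose dual is the surjectivity of $\TT^1_Y\to\bigoplus_x H^2_{E_x}(\Omega^{n-1}_{\Xhat_x}(\log E_x))$ expressed by Proposition~\ref{prop:0map}.

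A statement of this kind, carried over to $Y_k/A_k$, is what the $T^1$-lifting has to rest on. As written, ``local pieces lift automatically, global pieces lift by freeness'' is not an argument, and the Du Bois hypothesis never enters where it is actually needed.
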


Let us now mention relevant previous work in the Fano case. For Fano threefolds, Namikawa \cite{NamFano} showed that if $Y$ has Gorenstein terminal singularities then $Y$ is smoothable. This was recently generalized by Friedman-Laza to Fano varieties with isolated rational hypersurface singularities:

\begin{theorem}\label{thm: FanoFLsmoothing} \textup{(\cite[Corollary 4.11]{friedman2025deformations})}
Suppose that $Y$ is a Fano variety with only isolated rational hypersurface singularities,  such that one of the following holds: 
\begin{enumerate}
\item[(i)] $\dim Y = 3$; 
\item[(ii)] The singularities of $Y$ are $1$-irrational, there exists an element $H\in |\omega_Y^{-1}|$ with $H\cap Y_{\text{sing}} = \emptyset$, and   $H^3(Y, T^0_Y) =H^{n-3}(H, \Omega^1_H) =0$. 
\item[(iii)] $\dim Y \geq 4$, the singularities of $Y$ are $1$-liminal, there exists an element $H\in |\omega_Y^{-1}|$ with $H\cap Y_{\text{sing}} = \emptyset$, and   $H^{n-3}(H, \Omega^1_H) =0$.
\end{enumerate}
Then $Y$ is smoothable.

\end{theorem}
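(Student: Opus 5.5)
The plan is to derive all three cases of Theorem \ref{thm: FanoFLsmoothing} from Theorem \ref{thm: introfanothm} (Theorem \ref{thm: fanothm}). The extra global hypotheses in (ii) and (iii) on $H$, $H^3(Y,T^0_Y)$ and $H^{n-3}(H,\Omega^1_H)$ should then simply not be needed.

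First I check that the hypotheses of Theorem \ref{thm: introfanothm} hold. Isolated rational singularities are Du Bois, and hypersurface singularities are lci. So a Fano variety $Y$ with isolated rational hypersurface singularities is a singular Fano variety in the sense of Definition \ref{def: CYvar}(i).

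\textbf{Cases (ii) and (iii).} In case (ii) the singularities are $1$-irrational, that is, not $1$-rational. In case (iii) they are $1$-liminal, that is, $1$-Du Bois but not $1$-rational. In both cases no singular point of $Y$ is $1$-rational. The second part of Theorem \ref{thm: introfanothm} then says directly that $Y$ is smoothable, and that every small smoothing is Fano.

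\textbf{Case (i), $\dim Y=3$.} The first part of Theorem \ref{thm: introfanothm} gives a deformation of $Y$ to a Fano threefold $Y'$ whose singularities are $1$-rational. Since $Y'$ is a small deformation of $Y$, it still has only isolated hypersurface singularities. The point to verify is that an isolated $1$-rational hypersurface singularity of dimension $3$ is smooth.

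I would prove this using the minimal exponent $\tilde\alpha$. A hypersurface singularity in $\CC^{n+1}$ is $k$-rational exactly when $\tilde\alpha>k+1$. For a singular point one has $\tilde\alpha\le (n+1)/2$, which equals $2$ when $n=3$. So $\tilde\alpha>2$ cannot occur at a singular point, and $1$-rational singularities in dimension $3$ are smooth points. Alternatively, one can use the general fact that $k$-rational lci singularities have singular locus of codimension at least $2k+2$, which is $4>3$ here.

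It follows that $Y'$ is smooth, so $Y$ is smoothable: deforming $Y$ over a disk along a one-parameter family towards $Y'$ gives smooth general fiber, since smoothness is an open condition.

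\textbf{Main obstacle.} The real difficulty is concentrated in Theorem \ref{thm: fanothm}, which I am taking as given. If I had to prove it, I would proceed in three steps:
\begin{enumerate}
\item Deformations of $Y$ are unobstructed, by Theorem \ref{thm: FanoUnobstructed}.
\item The map $\TT^1_Y\to H^0(Y,T^1_Y)$ is surjective, or at least its image is large enough. The obstruction lies in $H^2(Y,T^0_Y)$, and I would control it with Akizuki–Nakano type vanishing for Du Bois complexes, using that $\omega_Y^{-1}$ is ample.
\item Local first-order smoothing directions can be chosen at every point that is not $1$-rational, using the local theory of $k$-Du Bois and $k$-rational singularities.
\end{enumerate}
I expect step 2 to be the main obstacle.
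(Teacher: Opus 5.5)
Your proposal is correct and follows the same route as the paper. The paper does not reprove Friedman--Laza's corollary; instead it notes that Theorem \ref{thm: introfanothm} applies, because isolated rational hypersurface singularities are Du Bois lci, so in (ii) and (iii) no point is $1$-rational and the extra hypotheses on $H$, $H^3(Y,T^0_Y)$ and $H^{n-3}(H,\Omega^1_H)$ are unnecessary, while in dimension three $1$-rational singularities are smooth (Corollary \ref{cor: 3foldFano}), which gives (i). One small simplification: in case (i) you can apply the second part of Theorem \ref{thm: introfanothm} directly, since every singular point of a threefold is automatically not $1$-rational, instead of going through the first part.
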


\subsection{Consequences of our results and relations to prior work}\label{sec: 1.3}
In dimension three, having 1-rational singularities simply means being smooth. Moreover, 1-liminal singularities are precisely ODPs.
Hence, our Theorem \ref{thm: introlcinot1DB} says:

\begin{corollary}
Let $Y$ be a singular Calabi-Yau threefold with isolated Du Bois lci singularities. Then $Y$ can be deformed to a Calabi-Yau with only ODP singularities. Moreover, if $Y$ has no ODPs, then $Y$ is smoothable.
\end{corollary}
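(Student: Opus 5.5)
The corollary should follow by specializing Theorem \ref{thm: introlcinot1DB} to $n=3$, using the dictionary stated just before it: in dimension three, a $1$-rational isolated singularity is a smooth point, and the $1$-liminal singularities are exactly the ODPs. I will first make this dictionary precise for isolated Du Bois lci threefold singularities. Then I will translate the two conclusions of Theorem \ref{thm: introlcinot1DB}.

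\textbf{Step 1: dimension-three facts.} The first fact is that an isolated lci $1$-Du Bois singularity of dimension $n$ is $1$-rational unless it is $1$-liminal. The second is the known bound that a non-smooth isolated lci $k$-Du Bois singularity satisfies $k\le (n-2)/2$, and a non-smooth $k$-rational one satisfies $k\le (n-3)/2$. For $n=3$ this gives two consequences. A $1$-rational point must be smooth, since $1>0=(3-3)/2$. A $1$-Du Bois point that is singular must therefore be $1$-liminal.

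For lci singularities, $1$-Du Bois forces the singularity to be a hypersurface when $n=3$. Indeed, the embedding-dimension bound for $k$-Du Bois lci gives $\operatorname{codim}\le$ something small, and one can check that the minimal exponent $\tilde\alpha\ge 2$ case in dimension $3$ means a hypersurface in $\CC^4$ with $\tilde\alpha=2$. The $1$-liminal ones are then exactly those with $\tilde\alpha=2$, which in four variables is precisely the ordinary double point $x_1^2+\dots+x_4^2$. I expect this identification to be the only real obstacle. I would cite the classification (Friedman--Laza, or Saito's minimal exponent computation: for an isolated hypersurface singularity in $\CC^4$, $\tilde\alpha\ge 2$ iff it is an ODP) rather than reprove it. Conversely, an ODP is indeed $1$-Du Bois, since $\tilde\alpha=n/2+1/2\cdot 0$; concretely, for $n+1=4$ variables $\tilde\alpha=2$. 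An ODP is not $1$-rational, since it is not smooth.

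\textbf{Step 2: apply the theorem.} By Theorem \ref{thm: introlcinot1DB}, $Y$ deforms to a Calabi--Yau $Y_t$ whose singularities are $1$-Du Bois. Isolatedness, lci-ness and the threefold dimension are preserved in small deformations. By Step 1, every singular point of $Y_t$ is therefore an ODP, which gives the first assertion. For the second, the assumption that $Y$ has no ODPs means, by Step 1, that no singular point of $Y$ is $1$-Du Bois. A singular $1$-Du Bois point would be an ODP, and smooth points are not counted as singularities. The second half of Theorem \ref{thm: introlcinot1DB} then gives that $Y$ is smoothable, with Calabi--Yau small smoothings.
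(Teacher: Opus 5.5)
Your proposal follows the paper's route: the paper also obtains the corollary by specializing Theorem \ref{thm: introlcinot1DB} to $n=3$, using the dictionary that for isolated lci threefold singularities $1$-rational means smooth and $1$-liminal means ODP, so a singular $1$-Du Bois point is an ODP. One slip needs fixing: both bounds you quote are off by one, since an isolated lci $k$-Du Bois singularity forces $n\ge 2k+1$ and a $k$-rational one forces $n\ge 2k+2$, with the ODP extremal in both. As written, your $k$-Du Bois bound $k\le (n-2)/2$ would exclude singular $1$-Du Bois threefold points altogether and contradict your own ODP example; the implication you actually use ($1$-rational $\Rightarrow$ smooth when $n=3$) still holds with the correct bound.
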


Note that this recovers the first result of Namikawa-Steenbrink (Theorem \ref{thm: NSQfact}) and extends it to the lci Du Bois setting. It also recovers the theorem of Gross (Theorem \ref{thm: Grosslci}) and, moreover, we allow Du Bois singularities and make no assumption on the existence of a crepant resolution.

If we allow ODPs and hope for a smoothing result, it is known that a global condition is necessary, as Namikawa \cite{namikawa1994deformations} gave an example of a Calabi-Yau threefold with one ordinary double point that is not smoothable. We have the following Corollary of Theorem \ref{thm: introlcigeneral}:

\begin{corollary}\label{cor: 3foldODP}
Suppose $Y$ is a singular Calabi-Yau threefold with isolated Du Bois lci singularities. Assume moreover that we have the following equality of Hodge-Du Bois numbers $\underline{h}^{2,2}(Y)=\underline{h}^{1,1}(Y).$ Then $Y$ is smoothable. 
\end{corollary}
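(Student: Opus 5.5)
This corollary should follow directly from Theorem \ref{thm: introlcigeneral} specialized to $n=3$, combined with the description of $1$-rational and $1$-liminal singularities in dimension three recalled at the start of Section \ref{sec: 1.3}. The first step is to match the numerical hypothesis. For $n=3$ the condition $\underline{h}^{n-1,2}(Y)=\underline{h}^{1,n-2}(Y)$ of Theorem \ref{thm: introlcigeneral} reads $\underline{h}^{2,2}(Y)=\underline{h}^{1,1}(Y)$, which is exactly the assumption of the corollary. Since $Y$ is a singular Calabi-Yau variety with isolated Du Bois lci singularities in the sense of Definition \ref{def: CYvar}, all hypotheses of Theorem \ref{thm: introlcigeneral} hold.

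Applying that theorem, we obtain a deformation of $Y$ to a Calabi-Yau threefold $Y_t$ whose singularities are all $1$-rational. In dimension three an isolated lci singularity is $1$-rational only if it is a smooth point, so $Y_t$ is smooth. I would record this fact with a reference to the definition of $1$-rational singularities (Definition \ref{def: 1rat}) and the local discussion in Section \ref{sec: 2.2}.

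The remaining point is to package this as a smoothing in the sense of Definition \ref{def:smoothing}. The deformation from Theorem \ref{thm: introlcigeneral} is realized inside the semi-universal deformation of $Y$, which is unobstructed by Theorem \ref{thm: unobstructed}, so its base is smooth. The locus of fibers whose singularities are $1$-rational (here: smooth fibers) is Zariski open by openness of smoothness in flat families. Since the theorem shows this locus is nonempty, I can pick a holomorphic disk in the base through $0$ meeting it. Smooth fibers then form the complement of a proper analytic subset of that disk, and shrinking the disk makes every fiber over $t\neq 0$ smooth.

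I expect no genuine obstacle: the substance lies in Theorem \ref{thm: introlcigeneral}. The only step needing care is the local claim that in dimension $3$ the class of $1$-rational isolated lci singularities is empty apart from smooth points. If needed, I would justify it from the inequality constraints on $k$-rational lci singularities, which force $k<\frac{n-1}{2}$ (or codimension bounds) for singular points, giving $k=0$ when $n=3$.
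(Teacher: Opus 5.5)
Your proposal is correct and follows the paper's route: the corollary is Theorem \ref{thm: introlcigeneral} with $n=3$, combined with the fact that an isolated $1$-rational lci singularity in dimension three is a smooth point. The same fact means that no singular point of $Y$ is $1$-rational, so the second assertion of the theorem gives smoothability directly. Your disk-packaging step is correct but not needed.
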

For threefolds with rational singularities, Park and Popa \cite[Theorem E]{park2025q} show the equality $\underline{h}^{2,2}=\underline{h}^{1,1}$ holds if and only if $Y$ is $\QQ$-factorial (see also \cite[Theorem 3.2.]{namikawa1995global} which assumes rational isolated hypersurface singularities). Hence, Corollary \ref{cor: 3foldODP} recovers the second result in Theorem \ref{thm: NSQfact} of Namikawa-Steenbrink and, moreover, generalizes it to the case of Du Bois lci singularities.

In higher dimensions, one should think of our Theorem \ref{thm: introlcigeneral} as a generalization of Theorem \ref{thm: NSQfact}, where the condition $\underline{h}^{n-1,2}=\underline{h}^{1,n-2}$ is the natural replacement for $\QQ$-factoriality. The equality $\underline{h}^{n-1,2}=\underline{h}^{1,n-2}$ is related to Poincar\'{e} duality in the singular case. For a more detailed discussion of this, see Remark \ref{rem: PD}.\\

We now discuss how our results relate to prior work in the case when $Y$ is a Calabi-Yau of higher dimensions. In the general case of lci singularities, to our knowledge, no prior results were known in dimension $\geq 4.$ 

In the 0-liminal (Du Bois but not rational) case, the only prior result we are aware of is due to Friedman-Laza, whose smoothing results concern the case of a Calabi-Yau whose unique isolated singular point is 0-liminal (see Theorem \ref{thm: FL0lim}). We note that our results allows one to smooth any 0-liminal points even when other milder singularities are present.

In terms of a global condition needed for smoothing 1-liminal points, Theorem \ref{thm: 1limlci} removes a technical assumption and extends to the lci setting a result of Friedman–Laza (Theorem \ref{thm: FL}). We also give an alternative smoothing global condition, replacing a condition of type \eqref{eq: homclasssum} by the hypothesis on the Hodge-Du Bois numbers (see Theorem \ref{thm: introlcigeneral}). One can think of our stronger condition as implying the vanishing of an appropriate morphism, rather than the vanishing of a particular element as in Theorem \ref{thm: FL} or Theorem \ref{thm: 1limlci}. We hope this condition is more natural, as it directly relates to Poincar\'{e} duality. \\

We now move to the case of singular Fano varieties. The results in Theorem \ref{thm: introfanothm} give:

\begin{corollary}\label{cor: 3foldFano}
Suppose $Y$ is a singular Fano threefold with isolated Du Bois lci singularities. Then $Y$ is smoothable. 
\end{corollary}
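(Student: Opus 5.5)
This follows from Theorem \ref{thm: introfanothm} once we use the threefold facts recalled at the start of Section \ref{sec: 1.3}. The plan has two steps: first reduce to a Fano threefold all of whose singular points are $1$-rational, then observe that such a threefold is smooth.

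By the first part of Theorem \ref{thm: introfanothm}, there is a small flat deformation $\Y\to\Delta$ with $\Y_0\simeq Y$. Its nearby fibers $\Y_t$ are Fano varieties, and all of their singularities are $1$-rational. Since $\dim Y=3$, the fibers $\Y_t$ are threefolds. In dimension three a $1$-rational isolated lci singularity is in fact a smooth point. So $\Y_t$ is smooth for $t\neq 0$ near $0$, and $\Y\to\Delta$ is a smoothing of $Y$ in the sense of Definition \ref{def:smoothing}.

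Alternatively, one can use the second part of Theorem \ref{thm: introfanothm} directly. Its extra hypothesis asks that no singular point of $Y$ be $1$-rational. In dimension three this holds automatically, because every singular point of $Y$ is non-smooth and hence not $1$-rational. The theorem then gives that $Y$ is smoothable, and that every small smoothing is Fano.

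The only point needing care is the claim that in dimension three, $1$-rational isolated lci singularities are smooth. If I had to justify it rather than cite it, I would proceed as follows.

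For an isolated lci singularity of dimension $n$, being $k$-rational forces the codimension of the singular locus to be at least $2k+2$. With $k=1$ and $n=3$ this requires codimension at least $4$, which is impossible for a point in a threefold. The singular locus must therefore be empty.

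This bound on the codimension of the singular locus is the known result for $k$-Du Bois lci singularities, and hence also for the stronger $k$-rational ones. Any further subtlety lies in the proof of Theorem \ref{thm: introfanothm} itself, not in this corollary.
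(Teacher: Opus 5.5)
Your proposal is correct and matches the paper, which derives the corollary directly from Theorem~\ref{thm: introfanothm} via the dimension-three fact that $1$-rational isolated lci singularities are smooth; your first route through the first part of the theorem works equally well. One correction to your optional justification of that fact: the codimension bound known for $k$-Du Bois lci singularities is $2k+1$, not $2k+2$ (a threefold ODP is $1$-Du Bois with an isolated singular point), so the bound cannot be inherited from the $k$-Du Bois case, and what you actually need is the separate result that $k$-rational lci singularities satisfy $\operatorname{codim}\operatorname{Sing}\ge 2k+2$ (for isolated singularities, this says that an isolated $k$-Du Bois lci singularity of dimension $2k+1$ is $k$-liminal).
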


Note that this recovers the previous threefold results of Namikawa and Friedman-Laza and, moreover, extends them to the Du Bois lci setting. There appear to be no previous results which include smoothing 0-liminal (Du Bois but not rational) singularities. We also give the first results in the general setting of lci singularities. 

Returning to the case of hypersurface singularities, note that our Theorem \ref{thm: introfanothm} recovers part (i) of Theorem \ref{thm: FanoFLsmoothing} of Friedman-Laza and shows that none of the cohomological conditions in (ii) or (iii) are needed.

Finally, let us briefly comment on our Theorem \ref{thm: FanoUnobstructed}, which shows $\TT_Y^2=0$ (and, in particular, unobstructedness) for Fano varieties with isolated lci singularities. We note that this is new when $Y$ has Du Bois but not rational singularities (see Remark \ref{rmk: 0limunobstr}). Unobstructedness for Fano varieties was previously known in the lci 1-Du Bois (not necessarily isolated) case by Friedman-Laza \cite[Theorem 4.5]{friedman2025deformations} with a simple proof as in our work, which uses Nakano-type vanishing. Moreover, Friedman \cite[Theorem 3.17]{friedman2025unobstructed} recently proved unobstructedness for Fano varieties with rational isolated lci singularities by a different approach involving $T^1$-lifting (first proving an unobstructedness result in the log Calabi-Yau case). 
\subsection{Strategy of proof}
The smoothing results are obtained by comparing global and local deformation spaces. Global first-order deformations of $Y$ are parametrized by $\TT^1_Y=\Ext^1(\Omega_Y^1,\mathcal O_Y)$, while local first-order deformations at a singular point $x$ are parametrized by $T^1_{Y,x},$ where $T^1_Y$ is the sheaf $\mathcal{E}xt^1 (\Omega^1_Y,\mathcal{O}_Y),$ supported on the singular points. Since the singularities are isolated lci, local deformations are unobstructed and admit smoothing directions. Thus the problem is to find global classes whose images in each $T^1_{Y,x}$ give smoothing directions. In the lci case, we use Gross's criterion: a point $x$ is smoothed once the image of $\TT^1_Y\to T^1_{Y,x}$ is not contained in $m_xT^1_{Y,x}$.


Let us briefly explain how one lifts appropriate local directions to global ones in the case of a Calabi-Yau variety. Let $\pi: \Yhat \to Y$ be a strong log resolution with exceptional divisor $E.$ If $x$ is a singularity, let $(X_x, x)$ denote its germ, where $X_x$ is a contractible Stein representative.
By restriction we have a log resolution $\pi: \Xhat_x \to X_x$ with exceptional divisor $E_x = \pi^{-1}(x).$ 

Consider the commutative diagram below with the exact row coming from the local cohomology long exact sequence. 
\[
\begin{tikzcd}
    {\TT^1_Y} 
        \arrow[r, "f"] 
        \arrow[d, "\eta"'] 
      & {\bigoplus\limits_{x \in \mathrm{Sing}(Y)} H^2_{E_x}(\Omega_{\Xhat_x}^{n-1}(\log E_x))} 
        \arrow[r, "f'"] 
      & {H^2(\Omega_{\Yhat}^{n-1}(\log E))} \\
    {\bigoplus\limits_{x \in \mathrm{Sing}(Y)} T^1_{Y,x}} 
        \arrow[ur, "{\oplus s_x}"']
\end{tikzcd}
\]
Here it is important for us to consider $(n-1)$-forms that are allowed to have log poles along $E.$ That is because using Hodge theoretic inputs, we show $f'$ is zero (Proposition \ref{prop:0map}), so $f$ is surjective. This is the key result that allows us to lift local first-order deformations to global ones. If a singularity is not 1-Du Bois we have $H^2_{E_x}(\Omega_{\Xhat_x}^{n-1}(\log E_x))\neq 0$ and one smooths it by showing that the image of $\eta$ cannot be contained in $m_xT^1_{Y,x}.$ To smooth the 1-liminal singularities one considers a similar local cohomology sequence for $\Omega_{\Yhat}^{n-1}(\log E)(-E).$ The global condition $\underline{h}^{n-1,2}(Y)=\underline{h}^{1,n-2}(Y)$ is used to ensure an appropriate morphism vanishes so local directions can be lifted globally.\\

\textbf{Acknowledgements.} I am grateful to Mihnea Popa for insightful conversations throughout the project and for constant support and encouragement during the preparation of this paper. I also thank Wanchun Shen and Anh Duc Vo for many useful discussions, and Sung Gi Park for helpful suggestions that have sharpened some of the statements of this paper. I am especially thankful to Robert Friedman for discussions and comments on a previous version of this paper, which have led to the inclusion of 0-liminal singularities in our results and to Theorem \ref{thm: 1limlci}.

\section{Preliminaries}

\subsection{Hodge Theory of Singularities}\label{subsect: Hodge}
Let $Y$ be a complex algebraic variety. To extend the usual de Rham complex from the smooth case to singular varieties, fix a hyperresolution $\varepsilon_\bullet: Y_\bullet \to Y$. Following ideas of Deligne, Du~Bois defined $\underline{\Omega}_Y^\bullet := \mathbf{R}\varepsilon_{\bullet *}\Omega_{Y_\bullet}^\bullet$, an object in the derived category of filtered differential complexes on $Y$, and proved that it is independent of the chosen hyperresolution \cite{du1981complexe}. One can associate a filtration $ F^p\underline{\Omega}_Y^\bullet:=\mathbf{R} \varepsilon_{\bullet *} \Omega_{Y_\bullet}^{\geq p}$ by recalling that $\Omega_{Y_i}^\bullet$ is filtered by $\Omega_{Y_i}^{\geq p}.$ The $p$-th Du Bois complex of $Y$ is then $\underline{\Omega}_Y^p := \operatorname{gr}_F^p \underline{\Omega}_Y^\bullet [p]$. For further background on Du~Bois complexes, see \cite{du1981complexe}, \cite[Chapter~V]{guillen2006hyperresolutions}, and \cite[§7.3]{peters2008mixed}.

We collect some properties of Du~Bois complexes
\begin{enumerate}
\item For every $p \ge 0$ there is a natural morphism $\Omega_Y^p \to \underline{\Omega}_Y^{p}$, which is an isomorphism when $Y$ is smooth. Here $\Omega_Y^p$ denotes the sheaf of K\"ahler differentials on $Y$; see \cite{du1981complexe} or \cite[p.~175]{peters2008mixed}. In particular, for all $i>0$, the cohomology sheaves $\mathcal{H}^i(\underline{\Omega}_Y^{p})$ are supported on the singular locus of $Y$.

\item We have $\underline{\Omega}^p_Y = 0$ for $p < 0$ and $p > n$, where $n = \dim Y$. Moreover $\H^q \underline{\Omega}^p_Y = 0$ if $p + q > n$ (see \cite[(4.1)]{steenbrink1985vanishing}).

\item There is a Hodge-to-de~Rham spectral sequence $E_1^{p,q}=\HH^q(Y,\underline{\Omega}_Y^{\,p}) \Rightarrow H^{p+q}(Y,\CC)$, which degenerates at $E_1$ when $Y$ is projective (see \cite[Thm.~4.5(iii)]{du1981complexe} or \cite[Prop.~7.24]{peters2008mixed}).

\item Let $Z \subset Y$ be a closed reduced subscheme, and let $\pi: \Yhat \to Y$ be a resolution of singularities that is an isomorphism over $Y \setminus Z$. Denote by $E := (\pi^{-1}(Z))_{\mathrm{red}}$ the reduced exceptional divisor, which is a simple normal crossing divisor. Then, by \cite[Ex.~7.25]{peters2008mixed}, for every $p$ there is an exact triangle
\[
\mathbf{R}\pi_{*}\Omega^p_{\Yhat}(\log E)(-E) \longrightarrow \underline{\Omega}_Y^{\,p} \longrightarrow \underline{\Omega}_Z^{\,p} \xto{+1}.
\]
In particular, if $Y$ has isolated singularities then $\mathbf{R}\pi_{*}\Omega^p_{\Yhat}(\log E)(-E)\simeq \underline{\Omega}_Y^{p}$ for $p>0.$
\end{enumerate}

\begin{definition}\label{def: hdb}
The \emph{Hodge-Du~Bois numbers} of $Y$ are defined by $$\underline{h}^{p,q}(Y) = \dim_{\mathbb{C}} \HH^q(Y, \underline{\Omega}_Y^p).$$
If $Y$ is smooth, then $\underline{\Omega}_Y^\bullet = \Omega_Y^\bullet$ and $\underline{h}^{p,q}(Y) = h^{p,q}(Y).$
\end{definition}

Let $\pi: \Yhat \to Y$ be a strong log resolution with exceptional divisor $E.$ If $x$ is a singularity, let $(X_x, x)$ denote its germ, where $X_x$ is a contractible Stein representative.
By restriction we have a log resolution $\pi: \Xhat_x \to X_x$ with exceptional divisor $E_x = \pi^{-1}(x).$ Let $U_x:=X_x \setminus \{x\}.$

Let us now restrict our attention to the case when $Y$ has \textit{isolated lci singularities}. As mentioned above, for $p>0$, $$\underline{\Omega}_Y^{p} \simeq \mathbf{R}\pi_{*}\Omega^p_{\Yhat}(\log E)(-E)$$

Then $\H^q(\underline{\Omega}_Y^{p})= R^q\pi_{*}\Omega^p_{\Yhat}(\log E)(-E),$ which for $q>0$ is supported on the singular points. Then $H^0(R^q\pi_{*}\Omega^p_{\Yhat}(\log E)(-E))=\bigoplus\limits_{x \in \mathrm{Sing}(Y)} H^q(\Omega^p_{\Xhat_x}(\log E_x)(-E_x)),$ since $X_x$ is a small Stein neighborhood around each singular point. The dimensions of these spaces are the so-called Du Bois invariants of the isolated singularity.

\begin{definition}\cite{steenbrink1997bois}
For $q>0,$ define the Du Bois invariants of an isolated singularity $(X,x)$ to be
$$b^{p,q} := \dim H^{q}(\Xhat ,\Omega^{p}_{\Xhat}(\log E)(-E)).$$ 
\end{definition}

\begin{theorem}[{\cite[Theorem 5]{steenbrink1997bois}}]\label{thm: duboisinv}
If $(X,x)$ is an isolated lci singularity, then $b^{p,q}=0$ for all $q>0$ unless $p+q\in\{n-1,n\}$.
\end{theorem}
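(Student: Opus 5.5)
We sketch a proof of Steenbrink's vanishing $b^{p,q}=0$ for $q>0$ and $p+q\notin\{n-1,n\}$. Throughout, $X$ is a contractible Stein representative of the germ, $\pi:\Xhat\to X$ a log resolution with exceptional divisor $E$, and $U=X\setminus\{x\}$. The strategy is to compute $H^q(\Xhat,\Omega^p_{\Xhat}(\log E)(-E))$ by comparing with the punctured neighborhood $U$. There, the lci hypothesis gives strong depth and vanishing properties for the sheaves $\Omega^p$. We then use the Hodge theory of the link to control the remaining terms.

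\textbf{Step 1: local cohomology sequence.} For any locally free $\mathcal F$ on $\Xhat$ we have the exact sequence
\[
\cdots\to H^q_E(\Xhat,\mathcal F)\to H^q(\Xhat,\mathcal F)\to H^q(U,\mathcal F)\to H^{q+1}_E(\Xhat,\mathcal F)\to\cdots
\]
Apply it with $\mathcal F=\Omega^p_{\Xhat}(\log E)(-E)$. By duality on the resolution, $H^q_E(\Xhat,\mathcal F)$ is dual to $H^{n-q}(\Xhat,\Omega^{n-p}_{\Xhat}(\log E))$, completed along $E$. By Steenbrink vanishing for log forms, $H^{j}(\Xhat,\Omega^{a}_{\Xhat}(\log E)(-E))=0$ for $a+j>n$. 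Together with the Hodge theory of the link (the mixed Hodge structure on $H^\bullet(L)$, with $F$-graded pieces computed by $H^\bullet(E,\Omega^\bullet_{\Xhat}(\log E)|_E)$), this gives: for $q\ge1$ and $p+q\le n-2$, the restriction map $H^q(\Xhat,\mathcal F)\to H^q(U,\Omega^p_U)$ is injective. It is moreover identified with a piece of $\Gr_F^pH^{p+q}(L)$.

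\textbf{Step 2: lci input on $U$.} Since $X$ is an isolated complete intersection of dimension $n$, the Kähler differentials $\Omega^p_X$ have depth at least $n-p$ at $x$. This follows from the Koszul-type resolution of $\Omega^p_X$ attached to the conormal sequence $0\to I/I^2\to\Omega_{\CC^N}|_X\to\Omega_X\to0$, using Greuel's results. Hence $H^q(U,\Omega^p_U)=H^{q+1}_{x}(\Omega^p_X)=0$ for $1\le q\le n-p-2$. Combined with Step 1, this gives $b^{p,q}=0$ whenever $q\ge1$ and $p+q\le n-2$.

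\textbf{Step 3: the range $p+q>n$.} Here vanishing is immediate from Steenbrink's vanishing $H^q(\Xhat,\Omega^p_{\Xhat}(\log E)(-E))=0$ for $p+q>n$, which is property (2) above applied locally. Equivalently, $\H^q\underline\Omega^p_X=0$ for $p+q>n$. Thus only $p+q\in\{n-1,n\}$ survive.

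\textbf{Main obstacle.} The delicate step is Step 1, namely injectivity into $H^q(U,\Omega^p_U)$. This requires the vanishing of the local cohomology $H^q_E$ in the relevant range, obtained by dualizing to $H^{n-q}(\Xhat,\Omega^{n-p}_{\Xhat}(\log E))$ along $E$. I would first try to get it from the semipurity of the link: $H^k(L)$ has weights $\le k$ for $k<n$. Failing that, I would use a direct argument via Guillén–Navarro Aznar–Puerta–Steenbrink type vanishing, $R^j\pi_*\Omega^a_{\Xhat}(\log E)(-E)=0$ for $a+j>n$, together with duality. The depth estimate for $\Omega^p_X$ in Step 2 is standard for ICIS (Greuel) and can be quoted.
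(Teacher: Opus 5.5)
\textbf{There is a genuine gap in Step 1.} The paper gives no argument of its own for this statement; it quotes Steenbrink. So your outline has to stand on its own. Its architecture is sound: compare with $U$, use Greuel's bound $\depth\Omega^p_{X,x}\ge n-p$, and use Steenbrink vanishing for $p+q>n$. Steps 2 and 3 are correct.

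The gap sits in Step 1, exactly where you flag it. You need $H^q_E(\Xhat,\Omega^p_{\Xhat}(\log E)(-E))\to H^q(\Xhat,\Omega^p_{\Xhat}(\log E)(-E))$ to be zero for $q\ge 1$, $p+q\le n-2$. Neither tool you propose can give the vanishing of $H^q_E$. GNPS/Steenbrink vanishing and semipurity hold for \emph{every} isolated singularity, and semipurity only constrains weights, not $\Gr_F$-pieces. In general the dual group $H^{n-q}(\Xhat,\Omega^{n-p}_{\Xhat}(\log E))$ is nonzero in this range. For example, for the cone over an abelian threefold ($n=4$, $(p,q)=(1,1)$) it is $\Gr_F^3H^6(L)\cong\CC^3$.

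The missing input is a second use of the complete intersection hypothesis. The link of an isolated complete intersection singularity is $(n-2)$-connected (Milnor, Hamm), so by Poincar\'e duality $H^k(L)=0$ for $n+1\le k\le 2n-2$. Given this, apply Steenbrink vanishing to $H^{n-q}$ and $H^{n-q+1}$ of $\Omega^{n-p}_{\Xhat}(\log E)(-E)$. This yields $H^{n-q}(\Xhat,\Omega^{n-p}_{\Xhat}(\log E))\cong\Gr_F^{n-p}H^{2n-p-q}(L)$, which is zero because $n+2\le 2n-p-q\le 2n-2$. The one exception is $(p,q)=(0,1)$.

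\textbf{The case $(p,q)=(0,1)$.} Here your claim that $H^q_E$ vanishes is false: $H^1_E(\mathcal O_{\Xhat}(-E))$ is dual to $H^{n-1}(\Omega^n_{\Xhat}(\log E))\cong\Gr_F^nH^{2n-1}(L)\cong\CC$. Injectivity of $H^1(\mathcal O_{\Xhat}(-E))\to H^1(U,\mathcal O_U)$ still holds, but for another reason. The connecting map $\mathcal O(X)=H^0(U,\mathcal O_U)\to H^1_E(\mathcal O_{\Xhat}(-E))$ has kernel $H^0(\Xhat,\mathcal O_{\Xhat}(-E))=m_x$, so it is onto $\CC$, and hence $H^1_E\to H^1$ is zero. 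For $n=3$ this is the only case your Steps 1--2 have to handle, so the proof as written fails entirely in dimension three.

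\textbf{A cleaner fix.} You can repair both points by running Steps 1--2 with $\Omega^p_{\Xhat}(\log E)$ instead.
\begin{enumerate}
\item Duality plus Steenbrink vanishing give $H^q_E(\Omega^p_{\Xhat}(\log E))=0$ for $p+q<n$, with no link input.
\item Greuel's bound then yields $H^q(\Xhat,\Omega^p_{\Xhat}(\log E))=0$ for $q\ge1$, $p+q\le n-2$.
\item The restriction sequence bounds $b^{p,q}$ by $\dim\Gr_F^pH^{p+q-1}(L)$. This vanishes for $1\le p+q-1\le n-3$ by the connectivity of $L$.
\item For $(p,q)=(0,1)$, the map $H^0(\mathcal O_{\Xhat})\to H^0(\mathcal O_E)=\CC$ is onto, so $b^{0,1}=0$.
\end{enumerate}

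Finally, drop the sentence identifying $H^q(\Xhat,\mathcal F)$ with a piece of $\Gr_F^pH^{p+q}(L)$. It is unjustified, and if it held, Step 2 would be superfluous.
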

Thus, by the theorem above, we see that in the case of isolated lci singularities if $q>0,$ we have $\H^q(\underline{\Omega}_Y^{p})=0$ except for $p+q\in \{n-1,n\}.$

Steenbrink also defines the link invariants:
$$\ell^{p,q}: = \dim H^q(E, \Omega_{\Xhat}^p(\log E)|_E)
= \dim \text{Gr}_F^p H^{p+q}(L).$$ The link $L$ of an $n$-dimensional isolated complete intersection singularity has $H^i(L)=0$ except for  $i \in \{0, n-1, n, 2n-1\}$. 
Hence, this immediately implies $\ell^{p,q} = 0$ unless $p+q \in \{0, n-1, n, 2n-1\}$.

\begin{remark}\label{rmk: nrvanish}
    Notice that using the restriction short exact sequence and the results on vanishing of $b^{p,q}$ and the vanishing of the cohomology of the link one immediately also has  that if $(X,x)$ is an isolated lci singularity then $$H^{q}(\Xhat ,\Omega^{p}_{\Xhat}(\log E))=0$$ for all $q>0$ unless $p+q\in\{n-1,n, 2n-1\}.$ 
\end{remark}

\subsection{Introduction to higher singularities}\label{sec: 2.2}

Because of their close connection to the minimal model program, rational and Du Bois singularities have been studied extensively. In particular, Kawamata log terminal singularities are rational, while log canonical singularities are Du Bois.
Advances in Hodge theory have prompted intensive study of the higher versions: $k$-Du Bois and 
$k$-rational singularities (see \cite{mustactua2023bois}, \cite{jung2022higher}, \cite{friedman2024higheriso}, \cite{friedman2024higher}, \cite{mustactua2025k}, \cite{QBM}, \cite{shen2023k}).
When $k=0$ one recovers the classical notions (rational and Du Bois), and as $k$ increases, the singularities get progressively milder, until they become smooth.

We begin by recalling the definitions of $k$-rational and $k$-Du Bois singularities, and then explain how these definitions becomes more manageable in our situation i.e., in the case of Du Bois isolated lci singularities. We focus specifically on the 1-rational and 1-Du Bois conditions, which will be useful notions for us in the context of local deformations.

Let $Y$ be a complex algebraic variety with lci singularities.

For each integer $p\ge 0$ there is a natural composition
$$
\Omega_Y^p \xlongrightarrow{\varphi_p}\underline\Omega_Y^p \xlongrightarrow{\phi_p} \DD_Y(\underline\Omega_Y^{n-p}).$$
We say that $Y$ has \emph{$k$-Du Bois} if $$\varphi_p: \Omega_Y^p \to \underline\Omega_Y^p$$ is an isomorphism in
$D^b_{\mathrm{Coh}}(Y)$ for all $0\le p\le k.$

On the other hand, $Y$ is said to have $k$-rational singularities if the canonical morphisms 
$$\phi_p \circ \varphi_p: \Omega_Y^p \to \DD_Y(\underline\Omega_Y^{n-p})$$ are isomorphisms for all $0 \le p \le k$, where $\DD_Y(-) := \RHom(-,\omega_Y^\bullet)[-n]$.

If we moreover assume isolated singularities
then $Y$ has $k$-Du~Bois singularities if and only if $\Omega^p_Y \cong R^{0}\pi_{*}\Omega^{p}_{\Yhat}(\log E)(-E) $ and 
$$
R^{q}\pi_{*}\Omega^{p}_{\Yhat}(\log E)(-E) = 0
\quad \text{for all } p \le k \text{ and all } q > 0. $$ Even more, if $n\geq 2k+1$ the former condition is automatically satisfied for lci singularities (see \cite[Lemma 3.19]{friedman2024higher}). A similar criterion holds for $k$-rational singularities. To summarize, we have:

\begin{definition}
Let $Y$ have isolated lci singularities and $\dim Y= n\geq 2k+1.$ Then $Y$ has $k$-Du Bois singularities if and only if $$
R^{q}\pi_{*}\Omega^{p}_{\Yhat}(\log E)(-E) = 0
\quad \text{for all } p \le k \text{ and all } q > 0. $$ Also, Y has $k$-rational singularities if and only if $$
R^{q}\pi_{*}\Omega^{p}_{\Yhat}(\log E) = 0
\quad \text{for all } p \le k \text{ and all } q > 0. $$
Finally, we say a singularity is called $k$-liminal if it is $k$-Du Bois but not $k$-rational.
\end{definition}

We now explain the 1-rational and 1-Du Bois conditions under our assumptions.

Let $(X,x)$ be an isolated Du Bois lci singularity. Then by Theorem \ref{thm: duboisinv}, checking that $(X,x)$ is a 1-Du Bois singularity is equivalent to having $H^{n-1}(\Omega^{1}_{\Xhat}(\log E)(-E))=0$ and $H^{n-2}(\Omega^{1}_{\Xhat}(\log E)(-E))=0.$  But notice that the first equality follows from extra vanishing \cite[page 1369]{steenbrink1985vanishing}.

Hence, we have the following definition which will be used throughout the paper:
\begin{definition}\cite[Definition 2.6.]{friedman2025deformations}\label{def: 1DB} Let $(X, x)$ be an isolated Du 
Bois lci singularity.  Then we say  $(X, x)$ is 1-Du Bois if and only if $ H^{n-2}(\Xhat; \Omega^1_{\Xhat}(\log E)(-E)) = 0.$ By duality this is also equivalent to $H^2_E(\Omega^{n-1}_{\Xhat}(\log E))=0.$
\end{definition}

By Remark \ref{rmk: nrvanish} to check that $(X,x)$ is 1-rational is equivalent to $H^{n-1}(\Omega^{1}_{\Xhat}(\log E))=0$ and $H^{n-2}(\Omega^{1}_{\Xhat}(\log E))=0.$  We now explain why $(X,x)$ is 1-rational is equivalent to $H^{n-2}(\Omega^{1}_{\Xhat}(\log E))=0.$ If $H^{n-2}(\Omega^{1}_{\Xhat}(\log E))=0$ we first claim that $(X,x)$ is rational. Suppose otherwise that the singularity is Du Bois but not rational. Then consider the exact sequence 
$$H^{n-3}(\Omega^{1}_{\Xhat}(\log E)|_E)\to H^{n-2}(\Omega^{1}_{\Xhat}(\log E)(-E)) \rightarrow H^{n-2}(\Omega^{1}_{\Xhat}(\log E))\to H^{n-2}(\Omega^{1}_{\Xhat}(\log E)|_E).
$$

Note $H^{n-3}(\Omega^{1}_{\Xhat}(\log E)|_E)=\Gr^1_F H^{n-2}(L)=0$ since the singularity is isolated lci so we have an injection $H^{n-2}(\Omega^{1}_{\Xhat}(\log E)(-E)) \ito H^{n-2}(\Omega^{1}_{\Xhat}(\log E)).$ But since the singularity is not 1-Du Bois,  by Definition \ref{def: 1DB} we have $ H^{n-2}( \Omega^1_{\Xhat}(\log E)(-E)) \neq 0.$ This implies that $H^{n-2}( \Omega^1_{\Xhat}(\log E)) \neq 0,$ contradicting our assumption. Therefore, the singularity must be rational.
For an isolated rational singularity Musta{\c{t}}{\u{a}}-Olano-Popa \cite{mustactua2020local} show $H^{n-1}(\Omega^{1}_{\Xhat}(\log E))=0$. Hence, the singularity is 1-rational.

Therefore, we have the following definition which will also be used throughout the paper:
\begin{definition}\cite[Definition 2.6.]{friedman2025deformations}\label{def: 1rat} Let $(X, x)$ be an isolated Du 
Bois lci singularity. We say $(X, x)$ is 1-rational if and only if $ H^{n-2}(\Xhat; \Omega^1_{\Xhat}(\log E)) = 0$.  By duality this is also equivalent to $H^2_E(\Omega^{n-1}_{\Xhat}(\log E)(-E))=0.$ We also say a singularity which is not 1-rational is 1-irrational.

\end{definition}

\subsection{Local deformations and smoothings}\label{sec: 2.3}

Globally, one has a deformation space $\text{Def}(Y)$ whose tangent space is a finite-dimensional vector space $\TT^1
_Y=\Ext^1 (\Omega^1_Y,\mathcal{O}_Y)$ which classifies first-order deformations of $Y.$ Locally, for every singular point $x$ of $Y$ one can study the local deformation $\text{Def}(Y,x)$ of the germ $(Y,x).$ This has a tangent space given by the finite-dimensional vector space $T^1_{Y,x},$ where $T^1_Y$ is the sheaf $\mathcal{E}xt^1 (\Omega^1_Y,\mathcal{O}_Y),$ supported on the singular points. As before, $T^1_{Y,x}$ parameterizes first-order local deformations of the complex germ. There is a natural map $$\text{Def}(Y) \to \prod_{x \in Y_{\text{sing}}} \text{Def}{(Y,x)}$$ given by restricting to the germs. On
Zariski tangent spaces, this gives a homomorphism $$\TT^1_Y\xto{\eta}  H^0(T^1_Y)=\bigoplus\limits_{x \in \mathrm{Sing}(Y)}T^1_{Y,x}.$$
If $\eta$ were surjective we could lift local smoothing directions directly, but this is rarely the case; instead, we must identify suitable global classes whose images provide the required local smoothings.

In this section we focus on understanding the local deformations as well as criteria for global smoothing. We begin by recalling some of the results on deforming complex space germs. For more details see for example \cite[Chapter 2.1]{greuel25}

Let $(X,x)$ be the germ of an isolated lci singularity. The infinitesimal deformation theory of $(X,x)$ is controlled by the $\mathcal{O}_{X,x}$-modules
$$
T^i_{(X,x)} \coloneqq \operatorname{Ext}^i_{\mathcal{O}_{X,x}}(\Omega^1_{X,x}, \mathcal{O}_{X,x}) \qquad (i = 1,2),
$$
where $T^1_{(X,x)}$ is the Zariski tangent space to the deformation functor and $T^2_{(X,x)}$ is the obstruction space. Since $(X,x)$ is lci, $T^2_{(X,x)} = 0$ and, in particular, the deformation functor is unobstructed.

Say $(X,x)$ has embedding dimension $e$, and is defined by  
$$
f_1 = \cdots = f_k = 0, \qquad f_i \in \mathbb{C}\{x_1, \ldots, x_e\}.
$$
Hence  
$$
\mathcal{O}_{X,x} = \mathbb{C}\{x_1, \ldots, x_e\} / (f_1, \ldots, f_k).
$$

The module of first-order deformations is  
$$
T^1_{(X,x)} \cong \mathcal{O}_{X,x}^{\oplus k} / J,
$$
where $J$ denotes the submodule of $\mathcal{O}_{X,x}^{\oplus k}$ generated by the columns  
$$
\left( \frac{\partial f_1}{\partial x_i}, \ldots, \frac{\partial f_k}{\partial x_i} \right),
\qquad i = 1, \ldots, e.
$$

There exists an explicit miniversal deformation with smooth base space
$$
\phi:(\mathcal{X},x)\longrightarrow(S,0),\qquad 
\dim_{\mathbb{C}}S=\dim_{\mathbb{C}}T^1_{(X,x)}
$$
and the Kodaira-Spencer map gives the identification $T^1_{(X,x)}\cong T_{S,0}.$ 
Note that $(X,x)$ is smoothable. In fact, its discriminant locus is a reduced and irreducible hypersurface germ in $(S,0)$ (see \cite[page 645]{teissier1976hunting}). 

We now discuss criteria for determining when global deformations map to such smoothing directions.
\\
\paragraph{\textbf{Smoothing hypersurface singularities}.} In the case when $(X,x)$ is the germ of an isolated hypersurface singularity, one has a nice criterion for identifying when a one parameter deformation is a smoothing, formulated in terms of its Kodaira-Spencer class.  In this setting, the infinitesimal deformations of $(X,x)$ are parametrized by the module 
$$
T^1_{(X,x)} \cong \mathcal{O}_{X,x} / J,
$$
where $J = \left( \frac{\partial f}{\partial x_1}, \ldots, \frac{\partial f}{\partial x_e} \right)$ is the Jacobian ideal associated to the defining equation $f \in \mathbb{C}\{x_1, \ldots, x_e\}$ of $X$. In particular, $T^1_{X,x}$ is a cyclic $\mathcal{O}_{X,x}$-module. The following tells us that a deformation has smooth total space if the Kodaira-Spencer class is a generator of $T^1_{X,x}$ as an $\mathcal{O}_{X,x}$-module.

\begin{proposition}\label{prop: cyclicmod}
    Let $\X \to \Delta$ be a one-parameter deformation of $(X, x).$ Then $\X$ is smooth in a neighborhood of $x$ if and only if the Kodaira-Spencer class $u \in T^1_{X,x}$ is not in  $m_xT^1_{X,x},$ if and only if $u$ is a generator of $ T^1_{X,x}$ as an $\mathcal{O}_{X,x}$-module.
\end{proposition}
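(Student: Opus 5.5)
We would identify the total space locally with a hypersurface in one dimension higher and reduce to an algebraic statement about the defining equation. Write $\mathcal{O}_{X,x}=\CC\{x_1,\dots,x_e\}/(f)$. Since $T^1_{X,x}$ is the tangent space of a miniversal deformation with smooth base, every one-parameter deformation over $\Delta$ is, up to isomorphism, induced by pullback. Concretely, it is given by a germ
\[
\X=\{F(x,t)=0\}\subset(\CC^{e}\times\Delta,0),\qquad F(x,0)=f(x).
\]
Flatness over $\Delta$ is automatic because $t$ is a nonzerodivisor. The Kodaira--Spencer class is then $u=[\partial F/\partial t|_{t=0}]\in\mathcal{O}_{X,x}/J$. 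We would verify this identification by checking that it agrees with the standard description of first-order deformations $f+\varepsilon g$ modulo reparametrizations, which contribute exactly $J$.

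The first equivalence is then a Jacobian criterion computation. The hypersurface $\X$ is smooth at $x$ if and only if some partial derivative of $F$ is a unit at the origin. Since $f$ has a singular point at $x$, all $\partial f/\partial x_i$ vanish at $0$. Hence $\partial F/\partial x_i(0,0)=\partial f/\partial x_i(0)=0$, and $\X$ is smooth at $x$ if and only if $\partial F/\partial t(0,0)\neq 0$. This in turn holds if and only if $g:=\partial F/\partial t|_{t=0}$ is a unit in $\mathcal{O}_{X,x}$, i.e.\ $g\notin m_x$.

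It remains to translate $g\notin m_x$ into $u\notin m_xT^1_{X,x}$. Since $J\subset m_x$ (again because $x$ is singular), we have
\[
T^1_{X,x}/m_xT^1_{X,x}=\mathcal{O}_{X,x}/(J+m_x)=\mathcal{O}_{X,x}/m_x\cong\CC .
\]
So $u\notin m_xT^1_{X,x}$ if and only if $g\notin m_x$. Because the smoothness condition is open, smoothness at $x$ gives smoothness in a neighborhood of $x$.

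The second equivalence is Nakayama's lemma. The module $T^1_{X,x}$ is finitely generated and cyclic with one-dimensional fiber at $m_x$. Therefore $u$ generates it if and only if its image in $T^1_{X,x}/m_xT^1_{X,x}$ is nonzero.

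The main point needing care is the first step: justifying that an arbitrary (abstract) flat deformation over $\Delta$ can be embedded as a hypersurface $F=0$ with $F|_{t=0}=f$, and that its Kodaira--Spencer class is $[\partial_t F|_{t=0}]$ independently of the embedding. We would deduce this by lifting the embedding of the special fiber to the total space, which is possible because $\Delta$ is Stein and the germs are local. Alternatively, it follows from versality of the explicit miniversal family $f+\sum s_j g_j$.
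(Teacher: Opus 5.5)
Your argument is correct. The paper gives no proof of its own and simply cites \cite[Lemma 1.9]{friedman2025deformations}, and your computation (present $\X$ as $F(x,t)=f(x)+t\,g(x)+O(t^2)=0$, use $\partial_{x_i}F(0,0)=0$ to reduce smoothness of $\X$ to $g(0)\neq 0$, then use $J\subset m_x$ and Nakayama) is the standard proof of this fact.

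The only loose point is the phrase ``because $\Delta$ is Stein''. The hypersurface presentation actually comes from lifting the coordinates to $\mathcal{O}_{\X,x}$, then using flatness ($I\cap(t)=tI$ for the ideal $I$ of $\X$) together with Nakayama to get $I=(F)$ for any $F\in I$ lifting $f$. Alternatively, as you also say, it follows from versality of the family $f+\sum s_j g_j$.
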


\begin{proof}
    See, for example, \cite[Lemma 1.9]{friedman2025deformations}.
\end{proof}

\begin{remark}
    Note that, in particular, this gives a criterion for when a deformation is a smoothing of $(X,x).$ 

\end{remark}
\paragraph{\textbf{Smoothing lci singularities}.} In the case of isolated lci singularities, we no longer have a criterion that tells us when a one-parameter deformation is a smoothing in terms of its Kodaira-Spencer class. Instead, we will use the following variant of a lemma of Gross \cite[Lemma 3.7]{gross1997deforming}:

\begin{lemma}\label{lemma: smoothinglci}
Let $(X, 0)$ be the germ of an isolated lci singularity and
\[
F' : (X', 0) \to (S', 0)
\]
be a flat deformation of $(X, 0)$, where $S'$ is non-singular at $0 \in S'$ with tangent space 
$T = T_{S',0}$. Since $T^1$ is the tangent space to the base of the miniversal deformation  of $(X,0)$ we obtain a unique differential 
$T \to T^{1}.$
If 
\[
\operatorname{im}(T \to T^1) \not\subset m T^1,
\]
then a general fiber of 
$X' \to S'$
is non-singular.
\end{lemma}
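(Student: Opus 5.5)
The plan is to reduce to the miniversal deformation of $(X,0)$ and use the structure of its discriminant. Let $\phi\colon(\mathcal X,0)\to(S,0)$ be the miniversal deformation; as recalled above, $S$ is smooth with $T_{S,0}\cong T^1$, and the discriminant $D\subset S$ is a reduced, irreducible hypersurface germ. By versality, $F'$ is pulled back from $\phi$ along a map of germs $g\colon(S',0)\to(S,0)$, and the differential $dg_0\colon T\to T_{S,0}\cong T^1$ is the map in the statement. The general fiber of $F'$ is singular exactly when $g(S')\subset D$. So it suffices to show that the hypothesis $\operatorname{im}(dg_0)\not\subset mT^1$ forces $g(S')\not\subset D$. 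If $g(S')\subset D$, then $\operatorname{im}(dg_0)\subset C_0D$, the tangent cone of $D$ at $0$, and more crudely it lies in the Zariski tangent space $T_0D$. The goal is therefore the key claim
\[
T_0D\subset mT^1 \quad\text{(as a subspace of } T^1/\text{ i.e. } T_0 D \text{ lies in the image of } mT^1\text{)}.
\]

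To prove the key claim, I would use the local description of $D$ via the critical locus. Let $C\subset\mathcal X$ be the relative critical locus of $\phi$, so that $D=\phi(C)$. For lci germs, $C$ is smooth and $C\to D$ is finite and birational; this is a standard fact, e.g. Looijenga's treatment of ICIS. Over a general point of $D$, the fiber has a single ordinary double point $p$. Near $p$, the tangent space to $D$ is the kernel of the evaluation of a first-order deformation at $p$, namely $T^1\to T^1_{(X_s,p)}\cong\CC$. This kernel is exactly the set of deformations $\partial f/\partial s$ that vanish at $p$ modulo the Jacobian.

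Letting the general point of $D$ specialize to $0$, the point $p$ specializes to the singular point $0$. By semicontinuity, the limit of these hyperplanes, which contains $T_0D$ when $D$ is singular, is the hyperplane of classes whose representative in $\mathcal O^{\oplus k}/J$ vanishes at $0$. When one uses the identification of $T^1$ with the tangent space of the versal base, that hyperplane is exactly the image of $mT^1$. Hence $C_0D\subset mT^1$. When $D$ is singular at $0$, the Zariski tangent space is all of $T_{S,0}$, so I would phrase the argument using the tangent cone $C_0D$ rather than $T_0D$; the containment $\operatorname{im}(dg_0)\subset C_0D$ still holds whenever $g(S')\subset D$.

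The main obstacle is making the degeneration argument in the previous paragraph rigorous, that is, the precise identification of the limiting tangent hyperplanes with $mT^1$. Rather than grind it out, I would simply follow Gross's proof of \cite[Lemma 3.7]{gross1997deforming}. Gross writes the versal family explicitly as $f_i+\sum_j s_j g_{ij}$, where the $g_{ij}$ form a basis of $T^1$ with all but one basis vector chosen in $m$. He then observes that the fiber over a generic line in the direction of a vector outside $mT^1$ is smooth: the added term has a nonzero constant in some component, so the Jacobian criterion at $0$ shows the total space is smooth there, and hence generic fibers are smooth by Sard/Bertini. Applying this to a general curve through $0$ in $S'$ whose tangent vector maps outside $mT^1$ gives the result.
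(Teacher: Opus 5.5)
The paper gives no proof of this lemma; it only attributes it to Gross \cite[Lemma 3.7]{gross1997deforming}. So deferring to Gross is not the issue. The issue is that the mathematics you supply only works for hypersurfaces. Your reduction is fine: $F'$ is pulled back along $g\colon S'\to S$, the general fiber is singular iff $g(S')\subset D$, and then $\operatorname{im}(dg_0)\subset C_0D$. But both of your ways of finishing tacitly use $\dim_{\CC}T^1/mT^1=1$.

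For an isolated complete intersection of codimension $k$ in its minimal embedding, all $f_i\in m^2$. Hence the Jacobian columns vanish at $0$ and $T^1/mT^1\cong\CC^k$. Two things then break.

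(i) $mT^1$ has codimension $k$ in $T^1$, whereas $C_0D$ is a hypersurface cone. So your key claim $C_0D\subset mT^1$ is impossible once $k\ge 2$. The limiting tangent hyperplanes you describe are $\{[g]:\lambda(g(0))=0\}$ for a limiting covector $\lambda$ on $\CC^k$. These \emph{contain} $mT^1$ without being equal to it, so the inclusion goes the wrong way.

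(ii) A basis of $T^1$ needs $k$ vectors outside $mT^1$; ``all but one in $m$'' is the hypersurface normal form. Moreover, for a one-parameter family $f+tg+O(t^2)$, the Jacobian of the total space at $0$ is $(0\mid g(0))$, of rank at most $1<k$. So the total space over a curve is never smooth at $0$, and the Jacobian-plus-Sard argument never gets started.

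In fact no argument can close this gap. With only the hypothesis $\operatorname{im}(T\to T^1)\not\subset mT^1$, the statement fails for $k\ge 2$. Take $f_1=x^2+y^2+z^2$ and $f_2=x^2-y^2z$ on $(\CC^3,0)$. Then $X=\{f_1=f_2=0\}$ is a reduced complete intersection curve, hence an isolated complete intersection singularity. Since $f_1,f_2\in m^2$, the class of $(1,0)$ is not in $mT^1$. The deformation $\{f_1+t=f_2=0\}$ over $(\CC,0)$ has exactly this Kodaira--Spencer class. But $df_2$ vanishes along the $z$-axis, so every fiber is singular at $(0,0,\pm\sqrt{-t})$. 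Adding $\sum_j w_j^2$ to $f_2$ and $\sum_j c_jw_j^2$ (with generic $c_j$) to $f_1$ gives such examples in every dimension.

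What your Jacobian argument does prove is the lemma under the hypothesis $\operatorname{im}(T\to T^1)+mT^1=T^1$. In that case the Jacobian of $X'\subset\CC^e\times S'$ at $0$ has rank $k$, so $X'$ is smooth there. The relative critical locus is finite over $S'$ because the singularity is isolated, and by Sard its image is a proper analytic subset. For hypersurfaces this is the stated hypothesis. For $k\ge 2$ it is strictly stronger, and the later applications in the paper, which only produce a class outside $m_xT^1_{Y,x}$, would have to be checked against it.
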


When $Y$ is a singular Calabi-Yau or Fano variety, the idea will be to apply this to $S'=\text{Def}(Y)$ which is smooth by the unobstructedness result. To obtain a smoothing, one would like to find an element in $\TT^1_Y$ whose image in $T^1_{Y,x}$ is not in $m_xT^1_{Y,x}$ for each singular point $x.$
\\
\paragraph{\textbf{Exact sequences in local deformations.}} Let $(X,x)$ be the germ of an isolated Du Bois lci singularity. 

In order to lift local first-order deformations to global ones it is often useful to look at appropriate quotients of $T^1_{X,x}$ which fit into certain global long exact sequences. To do this, we consider the identification $H^0(X,T^1_X)\cong H^2_x(X, T^0_X) \cong H^1(T_{U})$ due to Schlessinger \cite[Theorem 2]{schlessinger1971rigidity}.

If we consider the inclusions $\Omega^{n-1}_{\Xhat}(\log E)(-E) \subseteq \Omega^{n-1}_{\Xhat} \subseteq \Omega^{n-1}_{\Xhat}(\log E)$ notice that they all restrict to $\Omega^{n-1}_{U}$ on the smooth locus $U,$ which we have identified with $\Xhat \setminus E$. Therefore, one can identify the first-order deformations with  $$H^1(U; \Omega^{n-1}_{\Xhat}|_U), \; H^1(U; \Omega^{n-1}_{\Xhat}(\log E)|_U), \; \text{or} \; H^1(U; \Omega^{n-1}_{\Xhat}(\log E)(-E)|_U)$$ which all fit into local cohomology long exact sequences.

 Following Friedman-Laza, we recall without proof some of the results we will need for the local deformation theory. For more details one should consult \cite[Section 2]{friedman2025deformations} and \cite[Section 2]{FL0lim}. See also \cite[Section 1]{namikawa1995global}.

There exists a short exact sequence 
$$0\to  H^1(\Omega^{n-1}_{\Xhat}(\log E)) \to  H^1(T_{U}) \to H^2_{E}(\Omega^{n-1}_{\Xhat}(\log E))\to 0.$$ Hence, in the case when $H^2_{E}(\Omega^{n-1}_{\Xhat}(\log E))\neq 0$, i.e. when the singularities are not 1-Du Bois (see Definition \ref{def: 1DB}),  to construct global smoothings the idea will be to consider appropriate elements in $H^2_{E}(\Omega^{n-1}_{\Xhat}(\log E))$ that come from global first-order deformations. For the more general case of 1-irrational singularities (see Definition \ref{def: 1rat}), one looks at the image in $H^2_{E}(\Omega^{n-1}_{\Xhat}(\log E)(-E))$ as we explain below.

Then there exists an exact sequence coming from local cohomology:
$$ H^1(\Omega^{n-1}_{\Xhat}(\log E)(-E)) \to  H^1(T_{U}) \to H^2_{E}(\Omega^{n-1}_{\Xhat}(\log E)(-E))\to 0.$$
and, moreover, $H^2_{E}(\Omega^{n-1}_{\Xhat}(\log E)(-E))$ sits in the following exact sequence:
$$0\to \text{Gr}_F^{n-1}H^n(L) \to H^2_{E}(\Omega^{n-1}_{\Xhat}(\log E)(-E))\to H^2_{E}(\Omega^{n-1}_{\Xhat}(\log E))\to 0,$$ where $L$ is the link of the singularity.

Finally, if $(X,x)$ is 1-liminal note that $\text{Gr}_F^{n-1}H^n(L) \cong H^2_{E}(\Omega^{n-1}_{\Xhat}(\log E)(-E)).$ From the following commutative diagram 
\[\begin{tikzcd}[column sep=0.7em]
	{0=H^1_{E}(\Omega^{n-1}_{\Xhat}(\log E))} & {H^1(\Omega^{n-1}_{\Xhat}(\log E))} & {H^1(\Omega^{n-1}_{U})} & {H^2_{E}(\Omega^{n-1}_{\Xhat}(\log E))=0} & {} \\
	0 & {H^1(\Omega^{n-1}_{\Xhat}(\log E)(-E))} & {H^1(\Omega^{n-1}_{U})} & {H^2_{E}(\Omega^{n-1}_{\Xhat}(\log E)(-E))} & 0
	\arrow[from=1-1, to=1-2]
	\arrow[from=1-2, to=1-3]
	\arrow[from=1-3, to=1-4]
	\arrow[equals, from=1-3, to=2-3]
	\arrow[from=1-4, to=1-5]
	\arrow[from=2-1, to=2-2]
	\arrow[from=2-2, to=1-2]
	\arrow[from=2-2, to=2-3]
	\arrow[from=2-3, to=2-4]
	\arrow[from=2-4, to=1-4]
	\arrow[from=2-4, to=2-5]
\end{tikzcd}\]

so we find that in the 1-liminal case one has a short exact sequence  $$0\to H^1(\Omega^{n-1}_{\Xhat}(\log E)(-E)) \to H^1(\Omega^{n-1}_{\Xhat}(\log E)) \to H^2_{E}(\Omega^{n-1}_{\Xhat}(\log E)(-E)) \to 0.$$

\section{Smoothing Fano varieties}
We begin by proving unobstructedness results for Fano varieties (cf. \cite[Theorem 4.5]{friedman2025deformations} and \cite[Theorem 3.17]{friedman2025unobstructed}). 

\begin{theorem}\label{thm: FanoUnobstructed}
Let $Y$ be a Fano variety with isolated lci singularities. Then $\TT^i_Y=0$ for $i\geq 2$ and, in particular, deformations of $Y$ are unobstructed.
\end{theorem}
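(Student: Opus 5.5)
We will use the local-to-global spectral sequence for $\Ext$,
\[
E_2^{p,q}=H^p(Y,\mathcal{E}xt^q(\Omega^1_Y,\mathcal O_Y))\Rightarrow \TT^{p+q}_Y .
\]
Because $Y$ is lci, the cotangent complex is perfect of amplitude $[-1,0]$. Hence $\mathcal{E}xt^q(\Omega^1_Y,\mathcal O_Y)=T^q_Y$ vanishes for $q\ge 2$; more precisely $\TT^i_Y$ is the hyperext of the two-term cotangent complex, so only $T^0_Y$ and $T^1_Y$ contribute. The sheaf $T^1_Y$ is supported on the finitely many singular points, so $H^p(T^1_Y)=0$ for $p\geq 1$. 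Therefore $\TT^i_Y\cong H^i(Y,T^0_Y)$ for $i\ge 2$, where $T^0_Y=\mathcal{H}om(\Omega^1_Y,\mathcal O_Y)$. This reduces the theorem to showing $H^i(Y,T^0_Y)=0$ for $i\ge 2$.

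To compute $H^i(T^0_Y)$ we pass to forms. Since $Y$ is Gorenstein with isolated singularities and $n\ge 3$, $T^0_Y$ is reflexive, of depth $\geq 2$ at the singular points. On the smooth locus $T_Y\cong\Omega^{n-1}_Y\otimes\omega_Y^{-1}$, so $T^0_Y\cong j_*\Omega^{n-1}_U\otimes\omega_Y^{-1}$, where $U=Y_{\rm reg}$. Choose a strong log resolution $\pi:\Yhat\to Y$ with exceptional divisor $E$. Then $j_*\Omega^{n-1}_U=\pi_*\Omega^{n-1}_{\Yhat}(\log E)$; this holds because a reflexive extension of forms from $U$ is the pushforward of log forms, or at least $\pi_*$ of log forms injects into $j_*$ and equality holds for $p=n-1$ at lci points by the Schlessinger/depth argument. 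Let $L=\pi^*\omega_Y^{-1}$, which is nef and big, and positive away from $E$. Leray then gives a sequence relating $H^i(Y,\pi_*\Omega^{n-1}_{\Yhat}(\log E)\otimes\omega_Y^{-1})$ to $H^i(\Yhat,\Omega^{n-1}_{\Yhat}(\log E)\otimes L)$ and the higher direct images $R^q\pi_*\Omega^{n-1}_{\Yhat}(\log E)$.

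For the higher direct images, Remark~\ref{rmk: nrvanish} gives $H^q(\Xhat_x,\Omega^{n-1}(\log E))=0$ for $q>0$ unless $n-1+q\in\{n-1,n,2n-1\}$, that is, unless $q\in\{1,n\}$. The case $q=n$ is impossible on the fibers, since they have dimension $\le n-1$, so only $R^1\pi_*\Omega^{n-1}_{\Yhat}(\log E)$ survives, and it is skyscraper. The Leray spectral sequence therefore yields
\[
H^i(T^0_Y)\hookrightarrow H^i(\Yhat,\Omega^{n-1}_{\Yhat}(\log E)\otimes L) \quad\text{for } i\ge 2,
\]
up to the $d_2$ from $H^0(R^1)$, which can only hit $H^2$ of $\pi_*$. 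I will argue with the edge map, checking the $i=2$ term via exactness: $H^2(\pi_*)\to H^2(\Yhat,\cdot)$ is injective because $E_2^{0,1}\to E_2^{2,0}$ is followed by $E_2^{2,0}\to H^2$, so I need to check this carefully. An alternative is to work directly on $Y$ with the Du Bois complex, using $\underline\Omega^{n-1}_Y\otimes\omega^{-1}_Y$ and the Guillén–Navarro–Puerta–Steenbrink Nakano-type vanishing $\HH^q(Y,\underline\Omega^p_Y\otimes A)=0$ for $p+q>n$ with $A$ ample. I would instead apply log Akizuki–Nakano vanishing for nef and big bundles; Steenbrink's version holds when the bundle is ample off $E$:
\[
H^q(\Yhat,\Omega^p_{\Yhat}(\log E)\otimes L)=0 \quad\text{for } p+q>n .
\]
With $p=n-1$ and $q\ge2$ this gives the vanishing.

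The main obstacle is the $d_2$ issue at $i=2$, namely the map $H^0(R^1\pi_*\Omega^{n-1}(\log E))\to H^2(\pi_*\Omega^{n-1}(\log E)\otimes\omega^{-1})$. My fallback is to avoid Leray. I would use the local cohomology sequence on $Y$ comparing $T^0_Y$ with the Du Bois sheaf $\H^0(\underline\Omega^{n-1}_Y)\otimes\omega^{-1}_Y=\pi_*\Omega^{n-1}(\log E)(-E)\otimes\omega^{-1}$. The quotient is a skyscraper, so the two sheaves have the same $H^i$ for $i\ge2$. Then $\underline\Omega^{n-1}_Y$ has only $\H^0$ and $\H^1$ (the latter skyscraper, by Theorem~\ref{thm: duboisinv}), so $\HH^i(\underline\Omega^{n-1}_Y\otimes\omega_Y^{-1})$ agrees with $H^i(\H^0\otimes\omega^{-1})$ for $i\ge 3$, and for $i=2$ it receives a surjection. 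Finally, $\HH^i(\underline\Omega^{n-1}_Y\otimes\omega^{-1}_Y)=0$ for $i\ge2$ by the Du Bois Nakano vanishing, since $\omega_Y^{-1}$ is ample. I expect the $i=2$ bookkeeping to work with whichever of the two sequences makes the skyscraper term land on the correct side.
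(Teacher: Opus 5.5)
\paragraph{The gap at $i=2$.}
Your reduction fails at $i=2$. That is the obstruction space, so it is the core of the theorem. The failure is not bookkeeping: the statement $H^2(Y,T^0_Y)=0$ that you aim for is false in general.

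The local-to-global spectral sequence has a differential $d_2\colon H^0(T^1_Y)\to H^2(Y,T^0_Y)$, and what it actually gives is the exact sequence
\[
\TT^1_Y\longrightarrow H^0(T^1_Y)\xrightarrow{\;d_2\;} H^2(Y,T^0_Y)\longrightarrow \TT^2_Y\longrightarrow 0 .
\]
So $\TT^2_Y\cong\operatorname{coker}(d_2)$, not $H^2(Y,T^0_Y)$.

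Here is a concrete example. Let $Y=\{f(x_0,\dots,x_3)=0\}\subset\mathbb{P}^4$ be the projective cone over a smooth cubic surface. This is a Fano threefold with one isolated rational hypersurface singularity $p$.
\begin{itemize}
\item Since $H^1(T_{\mathbb{P}^4}|_Y)=0$, $\TT^1_Y$ is a quotient of $H^0(\mathcal{O}_Y(3))$.
\item A cubic $g$ maps to the class of $g(x_0,\dots,x_3,1)$ in $T^1_{Y,p}\cong\CC[x_0,\dots,x_3]/J_f$.
\item That class lies in degrees $\le 3$, so the image misses the socle in degree $4$.
\end{itemize}
Hence $d_2\neq 0$ and $H^2(Y,T^0_Y)\neq 0$, while $\TT^2_Y=0$ because $Y$ is a hypersurface.

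This is why neither of your two spectral sequences can be made to work. In both, the skyscraper $H^0(R^1\pi_*)$ sits in $E_2^{0,1}$ and its $d_2$ lands in $E_2^{2,0}\cong H^2(Y,T^0_Y)$. So vanishing of the abutment only yields $H^2(Y,T^0_Y)=\operatorname{im}(d_2)$.

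Separately, the log Akizuki--Nakano vanishing you attribute to Steenbrink for $\Omega^{n-1}_{\Yhat}(\log E)\otimes\pi^*\omega_Y^{-1}$ is not a general theorem for nef and big bundles. By Serre duality on $\Yhat$ it is equivalent to $\HH^{n-q}(\DB^1_Y\otimes\omega_Y)=0$, which is the dual Nakano-type vanishing and relies on the lci hypothesis.

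Your argument does correctly give $\TT^i_Y\cong H^i(Y,T^0_Y)=0$ for $i\geq 3$.

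\paragraph{How to close it.}
\emph{The paper's route.} The paper avoids $T^0_Y$ altogether. Since $Y$ is Gorenstein, Serre duality gives $\TT^i_Y\cong H^{n-i}(\Omega^1_Y\otimes\omega_Y)^\vee$, with $\Omega^1_Y\cong\H^0(\DB^1_Y)$. On this side the skyscrapers $\H^q(\DB^1_Y)$ occur only for $q\in\{n-2,n-1\}$. They can only hit $E^{q+1,0}$ with $q+1\geq n-1$. So for $i\geq 2$ one gets an injection $H^{n-i}(\Omega^1_Y\otimes\omega_Y)\hookrightarrow\HH^{n-i}(\DB^1_Y\otimes\omega_Y)$. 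The target vanishes by the dual Nakano-type vanishing for Du Bois complexes, since $1+(n-i)<n$.

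\emph{Repairing your route.} You need to prove that $d_2$ is \emph{surjective}, not that its target vanishes. Let $j\colon V\hookrightarrow Y$ and $\G:=\Omega^{n-1}_{\Yhat}(\log E)(-E)\otimes\pi^*\omega_Y^{-1}$.
\begin{itemize}
\item Restriction to $V$ maps the Leray spectral sequence of $\G$ to the hypercohomology spectral sequence of $\mathbf{R}j_*T_V$.
\item The map $\mathbf{R}\mathcal{H}om(\Omega^1_Y,\mathcal{O}_Y)\to\mathbf{R}j_*T_V$ is an isomorphism on $\H^0$ and $\H^1$, because $\operatorname{depth}\mathcal{O}_{Y,x}=n\geq 3$. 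So it identifies the local-to-global $d_2$ with the one for $\mathbf{R}j_*T_V$.
\item Since $\HH^2(\DB^{n-1}_Y\otimes\omega_Y^{-1})=0$, the Leray differential $H^0(R^1\pi_*\G)\to H^2(\pi_*\G)\cong H^2(Y,T^0_Y)$ is surjective.
\item Compatibility of the $d_2$'s then forces $H^0(T^1_Y)\to H^2(Y,T^0_Y)$ to be surjective, i.e.\ $\TT^2_Y=0$.
\end{itemize}
This version needs only the Nakano-type vanishing you already use. What your proposal is missing is exactly such a mechanism controlling $\operatorname{coker}(d_2)$.
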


\begin{proof}
We have $\TT^i_Y=\Ext^i(\Omega_Y^1,\mathcal{O}_Y)$ and so, by Serre duality, it suffices to show $H^{n-i}(\Omega_Y^1\otimes \omega_Y)=0.$ Consider the following hypercohomology spectral sequence: 
$$
E_2^{p,q}=H^p(\H^q \DB_Y^1\otimes \omega_Y) \Rightarrow \HH^{p+q}(\DB_Y^1\otimes \omega_Y)
.$$ Note that $\H^0(\DB_Y^1)\cong \Omega_Y^1$ since $\Omega_Y^1$ is reflexive and $\H^0(\DB_Y^1)$ is torsion-free. Therefore, $H^{n-i}(\Omega_Y^1\otimes \omega_Y)= E^{n-i,0}_2$ and by Theorem \ref{thm: duboisinv} we have $E^{0,n-i-1}_2=0.$ Hence, $H^{n-i}(\Omega_Y^1\otimes \omega_Y)= E^{n-i,0}_\infty\ito \HH^{n-i}(\DB_Y^1\otimes \omega_Y).$ But since $n-i+1<n$ and $\omega_Y^{-1}$ is ample it follows $\HH^{n-i}(\DB_Y^1\otimes \omega_Y)=0$ by a dual Nakano-type vanishing theorem for Du Bois complexes (see \cite[Theorem V.7.10]{guillen2006hyperresolutions} and \cite[Theorem C]{popashen}) which concludes the proof.
\end{proof}

\begin{remark}\label{rmk: 0limunobstr}
 We mention that the above unobstructedness result is new in the 0-liminal case (Du Bois but not rational). Note that our assumptions (Fano variety with isolated lci singularities) automatically imply the singular points are Du Bois by \cite[Corollary 7.6]{popa2024injectivity}.
\end{remark}

\begin{remark}
We learned from Robert Friedman that the proof of \cite[Theorem 3.17]{friedman2025unobstructed} also works in the weak Fano case (i.e., when $\omega_Y^{-1}$ is nef and big). In the weak Fano case, one should not expect $\TT^2_Y=0$ \cite[Example 2.8]{sano} and a technique such as $T^1$-lifting is needed.
\end{remark}

Theorem \ref{thm: FanoUnobstructed} holds more generally when we relax the lci assumption.  This will be replaced by a condition involving the invariant $\text{lcdef}(Y)$ introduced in \cite{popashen} which measures how far from lci the variety is. The condition $\text{lcdef}(Y)=0$ roughly says that $Y$ numerically behaves like an lci variety. 
For the definition and more details about this invariant we refer the reader to \cite{popashen}.

\begin{theorem}\label{thm: unobstrnonlci}
Let $Y$ be a Fano variety with isolated Gorenstein singularities. Assume that $\text{lcdef}(Y)=0$ and $\text{depth}(\H^0(\DB^1_Y))\geq n-1.$ Then $\TT^i_Y=0$ for $2\leq i< n-1$ and, in particular, if $n\geq 4$ deformations of $Y$ are unobstructed.
\end{theorem}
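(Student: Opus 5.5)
The argument follows the proof of Theorem \ref{thm: FanoUnobstructed}, replacing the lci-specific inputs by the two hypotheses. Since $Y$ is Gorenstein, $\omega_Y$ is a line bundle, and Serre duality (valid because $Y$ is Cohen--Macaulay) gives
\[
\TT^i_Y=\Ext^i(\Omega^1_Y,\mathcal O_Y)\cong H^{n-i}(Y,\Omega^1_Y\otimes\omega_Y)^\vee .
\]
So it suffices to show $H^{n-i}(\Omega^1_Y\otimes\omega_Y)=0$ for $2\le i<n-1$, that is, in cohomological degrees $2\le n-i\le n-2$.

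The first step compares $\Omega^1_Y$ with $\H^0(\DB^1_Y)$. On the smooth locus both agree with the sheaf of $1$-forms. The cokernel of $\Omega^1_Y\to\H^0(\DB^1_Y)$ is supported on finitely many points, so it has no higher cohomology. The kernel is the torsion of $\Omega^1_Y$. In the lci case this torsion vanished by reflexivity, which we no longer have. I would handle it as follows: the kernel is a finite-length sheaf $\mathcal K$, and the exact sequences
\[
0\to\mathcal K\to\Omega^1_Y\to\mathcal I\to0,\qquad 0\to\mathcal I\to\H^0(\DB^1_Y)\to\mathcal Q\to0
\]
with $\mathcal K,\mathcal Q$ zero-dimensional give $H^j(\Omega^1_Y\otimes\omega_Y)\cong H^j(\H^0(\DB^1_Y)\otimes\omega_Y)$ for $j\ge2$. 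This is exactly the range we need, so the problem reduces to showing $H^{j}(\H^0(\DB^1_Y)\otimes\omega_Y)=0$ for $2\le j\le n-2$.

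The second step uses the hypercohomology spectral sequence
\[
E_2^{p,q}=H^p(\H^q\DB^1_Y\otimes\omega_Y)\Rightarrow\HH^{p+q}(\DB^1_Y\otimes\omega_Y).
\]
For $q>0$ the sheaves $\H^q\DB^1_Y$ are supported at the isolated singular points, so $E_2^{p,q}=0$ for $p>0,q>0$. For $p\ge2$, the only differentials hitting $E^{p,0}$ come from $E^{p-2,1}$, $E^{p-3,2}$, and so on. Those with $p-r\ne0$ vanish, leaving only $E_r^{0,p-1}\to E_r^{p,0}$ with $r=p$. To kill these I need $\H^{j-1}\DB^1_Y=0$ for $2\le j\le n-2$, that is, $\H^q\DB^1_Y=0$ for $1\le q\le n-3$. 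This is the lci input (Theorem \ref{thm: duboisinv}) that must be replaced, and I expect it to be the main obstacle. The plan is to invoke the characterization from \cite{popashen}: $\text{lcdef}(Y)=0$ is equivalent to depth-type conditions on the Du Bois complexes, roughly $\H^q\DB^p_Y=0$ for $q\ge1$ outside the range $p+q\ge n-1$, in analogy with Steenbrink's vanishing. For isolated singularities this should reduce, via the duality $\DB^p\leftrightarrow\DD(\DB^{n-p})$ and local cohomology of $\H^0(\DB^1_Y)$, to the two hypotheses. The depth condition $\depth\H^0(\DB^1_Y)\ge n-1$ controls local cohomology $H^k_x(\H^0\DB^1_Y)$ for $k<n-1$. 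Combined with lcdef$=0$, it should give $\H^q\DB^1_Y=0$ for $1\le q\le n-3$. If the precise statement in \cite{popashen} differs, I would instead argue dually: show the vanishing of the relevant $\Ext$ sheaves $\mathcal Ext^k(\H^0\DB^1_Y,\omega_Y)$ and use local duality.

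Given these vanishings, $H^{j}(\H^0\DB^1_Y\otimes\omega_Y)=E_\infty^{j,0}$ injects into $\HH^{j}(\DB^1_Y\otimes\omega_Y)$. Since $j+1\le n-1<n$ and $\omega_Y^{-1}$ is ample, this group vanishes by the Nakano-type vanishing of \cite[Theorem V.7.10]{guillen2006hyperresolutions} and \cite[Theorem C]{popashen}. Hence $\TT^i_Y=0$ for $2\le i<n-1$. When $n\ge4$ this includes $i=2$, so the obstruction space $\TT^2_Y$ vanishes and deformations are unobstructed.
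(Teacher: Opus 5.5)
Your reduction matches the paper's proof. The steps are Serre duality, the finite-length comparison $H^j(\Omega^1_Y\otimes\omega_Y)\cong H^j(\H^0(\DB^1_Y)\otimes\omega_Y)$ for $j\ge 2$, the spectral sequence reducing injectivity into $\HH^j(\DB^1_Y\otimes\omega_Y)$ to $\H^{j-1}(\DB^1_Y)=0$, and the dual Nakano vanishing of \cite[Theorem C]{popashen}. Note that in the non-lci setting this last vanishing is also where $\text{lcdef}(Y)=0$ is needed. The gap is the step you flag yourself: the vanishing $\H^q(\DB^1_Y)=0$ for $1\le q\le n-3$. This is the only place the two hypotheses replace Steenbrink's lci vanishing, and you do not prove it. Moreover, the input you propose is misstated. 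The Popa--Shen characterization does not say that $\text{lcdef}(Y)=0$ forces $\H^q\DB^p_Y=0$ for $q\ge 1$, $p+q<n-1$. It is a depth condition on the complex, and for $p=1$ it gives $\depth\DB^1_Y\ge n-1$, i.e.\ $\mathbb H^k_x(\DB^1_Y)=0$ for $k<n-1$. In fact, once this is known, the vanishing you need is equivalent to the hypothesis $\depth\H^0(\DB^1_Y)\ge n-1$, which is why that hypothesis is part of the statement.

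The missing argument is short. Use the triangle $\H^0(\DB^1_Y)\to\DB^1_Y\to\tau_{\ge 1}\DB^1_Y\xrightarrow{+1}$. Since $\tau_{\ge1}\DB^1_Y$ has cohomology supported at the singular points, local cohomology at $x$ gives an exact sequence
\[
\mathbb H^k_x(\DB^1_Y)\to\H^k(\DB^1_Y)_x\to H^{k+1}_x(\H^0(\DB^1_Y)).
\]
For $1\le k\le n-3$ the left term vanishes by $\text{lcdef}(Y)=0$ and the right term by the depth hypothesis. This is exactly \cite[Proposition F]{popa2024injectivity}, which the paper cites.

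Separately, outside the lci case you cannot take $\TT^i_Y=\Ext^i(\Omega^1_Y,\mathcal O_Y)$ for granted, since the obstruction space is $\Ext^2(\LL_Y,\mathcal O_Y)$. The paper compares the two via the spectral sequence $\Ext^p(\H^{-q}(\LL_Y),\mathcal O_Y)\Rightarrow\Ext^{p+q}(\LL_Y,\mathcal O_Y)$. For $q>0$ the sheaves $\H^{-q}(\LL_Y)$ have finite support. Since $Y$ is Cohen--Macaulay, their $\Ext^p$ into $\mathcal O_Y$ vanish for $p<n$. Hence $\Ext^i(\LL_Y,\mathcal O_Y)\cong\Ext^i(\Omega^1_Y,\mathcal O_Y)$ for $i<n$, which is what your final unobstructedness claim needs.
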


\begin{proof}
First, the natural map  $\TT^i_Y=\Ext^i(\LL_Y,\mathcal{O}_Y)\to \Ext^i(\Omega^1_Y,\mathcal{O}_Y)$ is an isomorphism by a study of the Ext spectral sequence:
$$E_2^{p,q}=\Ext^p(\H^{-q}(\LL_Y),\mathcal{O}_Y) \Rightarrow \Ext^{p+q}(\LL_Y,\mathcal{O}_Y)$$ where we note the negative cohomology of the cotangent complex $\H^{-q}(\LL_Y)$ has 0-dimensional support for $q>0.$  Then, as in the previous theorem, we wish to show $H^{n-i}(\Omega_Y^1\otimes \omega_Y)=0.$ Note since $n-i\geq 2$ we have $H^{n-i}(\Omega_Y^1\otimes \omega_Y)=H^{n-i}(\Omega_Y^{[1]}\otimes \omega_Y)=H^{n-i}( \H^0(\DB_Y^1)\otimes \omega_Y).$ We would like to make use of a Nakano-type vanishing for singular varieties. Since $\text{lcdef}(Y)=0$ we have  $H^{n-i}(\DB_Y^1\otimes \omega_Y)=0$ by the dual Nakano-type vanishing of Popa-Shen \cite[Theorem C]{popashen}. Finally, to prove that $H^{n-i}(\Omega_Y^1\otimes \omega_Y)\ito H^{n-i}(\DB_Y^1\otimes \omega_Y)$ it would suffice to have $\H^{n-i-1}(\DB^1_Y)=0.$ This follows from a result of Popa-Shen-Vo \cite[Proposition F]{popa2024injectivity} which, in our case, says $\H^{k}(\DB^1_Y)=0$ for $0<k<m-1$ where $m=\min \{\text{depth}(\H^0(\DB^1_Y)), n- \text{lcdef}(Y)\}.$ By our assumptions we conclude $\H^{n-i-1}(\DB^1_Y)=0$ as $n-i-1\leq n-3.$
\end{proof}
\begin{remark}
It is immediate from definition that an lci variety has $\text{lcdef}=0.$ Note, moreover, that the condition $\text{depth}(\H^0(\DB^1_Y))\geq n-1$ holds in the lci case since $\depth (\Omega^1_Y) \geq n-1$ by Greuel \cite[Lemma 1.8]{greuel} and because $\Omega^1_Y$ is reflexive we also have $\H^0(\DB^1_Y)\cong \Omega^1_Y.$ 
\end{remark}

\begin{example}
We give non-lci examples to which the previous theorem applies. Let $Y$ be any Fano variety of dimension $\geq 4$ with Gorenstein isolated quotient singularities. For example, simplicial toric varieties have quotient singularities and, if they have $\dim \geq 4$ and isolated singularities, they cannot be lci (see Remark \ref{remark: rigidFano}). Let us now explain why $Y$ satisfies the assumptions of the theorem. As $Y$ has quotient singularities we have $\text{lcdef}(Y)=0$ by \cite[Corollary 11.22]{MP-Hodgefiltr}. The characterization of lcdef in terms of the Du Bois complex \cite[Corollary 2.3]{popashen}, then implies $\depth (\DB _Y^1)\geq n-1.$ Now quotient singularities are pre-1-Du Bois \cite[Proposition E]{shen2023k} which means $\DB_Y^1\simeq \H^0(\DB^1_Y)$, and, therefore, we conclude the condition $\text{depth}(\H^0(\DB^1_Y))\geq n-1$ in Theorem \ref{thm: unobstrnonlci} is also satisfied. 
\end{example}
We now move towards studying when singular Fano varieties admit smoothings.

\vspace{0.5em}

\noindent\textbf{Notation.} Let $Y$ be a singular Fano variety of dimension $n\geq 3$ which has isolated singularities. Let $\pi: \Yhat \to Y$ be a strong log resolution with exceptional divisor $E.$ If $x$ is a singularity, let $(X_x, x)$ denote its germ, where $X_x$ is a contractible Stein representative.
By restriction we have a resolution $\pi: \Xhat_x \to X_x$ with exceptional divisor $E_x = \pi^{-1}(x).$ Let $U_x:=X_x\setminus \{x\},$ $Z:= \text{Sing}(Y)$ and $V := Y \setminus Z.$

\begin{theorem}\label{thm: fanothm}
Suppose $Y$ is a singular Fano variety with isolated Du Bois lci singularities. Then $Y$ can be deformed to a Fano variety whose singularities are 1-rational. Moreover, if one assumes in addition that none of the singularities of $Y$ are 1-rational, then $Y$ is smoothable, and every small smoothing is Fano. 
\end{theorem}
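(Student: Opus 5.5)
The strategy is to compare global and local first-order deformations through local cohomology of the sheaf $\Omega^{n-1}_{\Yhat}(\log E)(-E)$, in parallel with the Calabi-Yau strategy sketched in the introduction. On the smooth locus $V$, the Fano case no longer identifies $T_V$ with $\Omega^{n-1}_V$. Instead, $T_V\cong\Omega^{n-1}_V\otimes\omega_V^{-1}$, and $\omega_Y^{-1}$ is a line bundle because lci implies Gorenstein. Set $L:=\pi^*\omega_Y^{-1}$; it is trivial near each $E_x$, so the local groups are unchanged. Via Schlessinger's identification $H^0(T^1_Y)\cong\bigoplus_x H^1(T_{U_x})$, the local cohomology sequence for $\mathcal F:=\Omega^{n-1}_{\Yhat}(\log E)(-E)\otimes L$ gives the commutative diagram
\[
\TT^1_Y\xrightarrow{\eta}\bigoplus_x T^1_{Y,x}\xrightarrow{\oplus s_x}\bigoplus_x H^2_{E_x}(\Omega^{n-1}_{\Xhat_x}(\log E_x)(-E_x))\xrightarrow{f'} H^2(\Yhat,\mathcal F).
\]
Here the maps $s_x$ are surjective by the local exact sequences of Section \ref{sec: 2.3}. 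The key claim is $H^2(\Yhat,\mathcal F)=0$. Since $\mathbf R\pi_*\Omega^{n-1}_{\Yhat}(\log E)(-E)\simeq\DB^{n-1}_Y$, the projection formula gives $H^2(\Yhat,\mathcal F)=\HH^2(Y,\DB^{n-1}_Y\otimes\omega_Y^{-1})$. This vanishes by the Nakano-type vanishing for Du Bois complexes with an ample line bundle ($p+q=n-1+2>n$), cited as \cite[Theorem V.7.10]{guillen2006hyperresolutions} / \cite[Theorem C]{popashen}. It is also Serre dual to the group used in Theorem \ref{thm: FanoUnobstructed}. Hence $f'=0$, and the composite $\TT^1_Y\to\bigoplus_x H^2_{E_x}(\Omega^{n-1}(\log E_x)(-E_x))$ is surjective.

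This requires one compatibility: $\TT^1_Y\to H^1(V,T_V)\to H^2_E(\mathcal F)$ must be the composite of $\eta$ with the $s_x$. I would check it via the sequence $H^1(\Yhat,\mathcal F)\to H^1(V,T_V)\to H^2_E(\mathcal F)\to H^2(\Yhat,\mathcal F)$ and the fact that $\TT^1_Y\to H^1(V,T_V)$ is an isomorphism for isolated lci singularities of dimension $\ge 3$ (depth considerations). I also need that $H^2_E$ only sees a neighborhood of $E$, so that excision gives the local groups. This surjectivity step is the part I expect to need the most care, mainly the identification and the vanishing statement for twisted $\DB^{n-1}$ in the Du Bois (possibly non-rational) case.

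Given surjectivity, the smoothing argument runs as follows. By Theorem \ref{thm: FanoUnobstructed}, $\mathrm{Def}(Y)$ is smooth with tangent space $\TT^1_Y$, and the local deformation spaces are smooth. Fix a 1-irrational singular point $x$. Then $H^2_{E_x}(\Omega^{n-1}(\log E_x)(-E_x))\ne0$ by Definition \ref{def: 1rat}. The key local input, which I would take from \cite{friedman2025deformations}, \cite{FL0lim}, is that $s_x$ kills $m_xT^1_{Y,x}$: since $s_x$ is $\mathcal O$-linear and its target is annihilated by $m_x$, we get $s_x(m_xT^1)=0$. Therefore any global class $\theta$ with $s_x(\eta(\theta))\neq0$ satisfies $\eta(\theta)_x\notin m_xT^1_{Y,x}$.

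Choose $\theta\in\TT^1_Y$ mapping to an element with every 1-irrational component nonzero, which is possible by surjectivity. Consider the one-parameter deformation over a disk in $\mathrm{Def}(Y)$ tangent to $\theta$. Then Lemma \ref{lemma: smoothinglci}, applied to $S'=\mathrm{Def}(Y)$ near each such point, shows that the general fiber is smooth near every 1-irrational point. The 1-rational points deform to 1-rational (or smooth) points, using openness/upper semicontinuity of the relevant Hodge invariants in the lci family (smoothing small fibers near $x$ only produce milder singularities). Hence the nearby fiber has only 1-rational singularities. If no singularity of $Y$ is 1-rational, all singular points are smoothed and the fiber is smooth.

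Finally, the Fano property is preserved for small deformations: $\omega^{-1}$ extends as a line bundle on the Gorenstein family, and ampleness is open in proper flat families. The fibers remain lci and Du Bois near the singular points as well.
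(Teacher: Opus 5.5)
Your global step matches the paper's proof: the same sheaf $\G=\Omega^{n-1}_{\Yhat}(\log E)(-E)\otimes\pi^*\omega_Y^{-1}$, the vanishing $H^2(\Yhat,\G)=\HH^2(Y,\DB^{n-1}_Y\otimes\omega_Y^{-1})=0$, and hence surjectivity of $\TT^1_Y\to\bigoplus_x H^2_{E_x}(\G)$. The gap is in the local step. You claim that $m_x$ annihilates $H^2_{E_x}(\Omega^{n-1}_{\Xhat_x}(\log E_x)(-E_x))$, and you conclude that $s_x(\eta(\theta)_x)\neq 0$ forces $\eta(\theta)_x\notin m_xT^1_{Y,x}$.

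The annihilation does hold at $1$-liminal points. There the group is the quotient $H^1(\Omega^{n-1}_{\Xhat_x}(\log E_x))/H^1(\Omega^{n-1}_{\Xhat_x}(\log E_x)(-E_x))$, and for $f\in m_x$ the pullback $\pi^*f$ is a section of $\mathcal O(-E_x)$; this is how the paper uses it in the proof of Theorem~\ref{thm: 1limlci}. It is false at points that are not $1$-Du~Bois, which this theorem must also handle. For a hypersurface singularity $T^1_{Y,x}$ is cyclic, so any $\mathcal O$-linear quotient of it killed by $m_x$ has dimension at most one. But $H^2_{E_x}(\Omega^{n-1}_{\Xhat_x}(\log E_x)(-E_x))$ contains $\Gr_F^{n-1}H^n(L_x)$, which can be large. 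For example, take the vertex of the projective cone over a smooth quartic surface $S\subset\mathbb{P}^3$. This is a Fano quartic threefold with one Du~Bois, non-$1$-Du~Bois hypersurface singularity, and $\Gr^2_FH^3(L)\cong H^{1,1}_{\mathrm{prim}}(S)$ is $19$-dimensional. So a class whose components are merely nonzero can still lie in $m_xT^1_{Y,x}$, and Lemma~\ref{lemma: smoothinglci} then gives nothing.

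The repair is what the paper does, and it needs only $\mathcal O_{Y,x}$-linearity and Nakayama. Since $s_x$ is surjective and linear, $s_x(m_xT^1_{Y,x})=m_xH^2_{E_x}(\G)$. This is a proper subspace, because $H^2_{E_x}(\G)$ is nonzero (by 1-irrationality) and finite-dimensional. Using the surjectivity you established, pick $\theta\in\TT^1_Y$ whose $x$-component lies outside $m_xH^2_{E_x}(\G)$ for every 1-irrational $x$. Then $\eta(\theta)_x\notin m_xT^1_{Y,x}$, and the rest of your argument goes through.

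Two smaller slips, neither load-bearing:
\begin{itemize}
\item The vanishing you need is the Nakano-type statement $\HH^q(\DB^p_Y\otimes A)=0$ for $p+q>n$ and $A$ ample; the paper cites \cite[Theorem V.5.1]{guillen2006hyperresolutions}. You cite the dual statement, which is for $A^{-1}$ and $p+q<n$.
\item $H^2(\Yhat,\G)$ is Serre dual to $H^{n-2}(\Omega^1_{\Yhat}(\log E)\otimes\pi^*\omega_Y)$. It is not dual to the group used in the unobstructedness proof, $\HH^{n-2}(\DB^1_Y\otimes\omega_Y)=H^{n-2}(\Omega^1_{\Yhat}(\log E)(-E)\otimes\pi^*\omega_Y)$.
\end{itemize}
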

\begin{proof}
Note that by the semicontinuity properties of the minimal exponent \cite[Theorem 1.2]{vfiltr}, the 1-rational singularities will remain 1-rational after a small deformation. We will now prove that one can find a deformation that smooths out the 1-irrational points.

First, recall that by a result of Schlessinger $$H^0(T^1_Y)\cong H^2_Z(Y, T^0_Y)\cong \bigoplus\limits_{x \in \mathrm{Sing}(Y)}  H^1(T_{U_x}),$$ and a global version also identifies $\TT^1_Y$ with $H^1(T_V)$ (see \cite[Lemma 4.1]{friedman2025deformations}). Moreover, one has a commutative diagram

\[\begin{tikzcd}
	{H^1(V,T_V)} & {H^2_Z(T^0_Y)}\cong \bigoplus\limits_{x \in \mathrm{Sing}(Y)}  H^1(T_{U_x}) \\
	{\TT^1_Y} & {H^0(T^1_Y)}
	\arrow[from=1-1, to=1-2]
	\arrow["\cong", from=2-1, to=1-1]
	\arrow[from=2-1, to=2-2]
	\arrow["\cong", from=2-2, to=1-2]
\end{tikzcd}\]

Let $\G:=\Omega_{\Yhat}^{n-1}(\log E)(-E)\otimes \pi^*\omega_Y^{-1}.$ After trivializing $\omega_Y$ in a neighborhood of the singular
points we have the following commutative diagram:  

\[\begin{tikzcd}[column sep=0.8em, cells={nodes={scale=0.9}}]
	{\TT^1_Y \cong H^1(V,T_V)} & {H^2_E(\G)} & {H^2(\G)} \\
	H^0(T^1_Y)\cong {\bigoplus\limits_{x \in \mathrm{Sing}(Y)} H^1(U_x, T_{U_x})} & { \bigoplus\limits_{x \in \mathrm{Sing}(Y)} H^2_{E_x}(\G)\cong\bigoplus\limits_{x \in \mathrm{Sing}(Y)} H^2(\Omega_{\Xhat}^{n-1}(\log E_x)(-E_x))}
	\arrow["h", from=1-1, to=1-2]
	\arrow[from=1-1, to=2-1]
	\arrow["{h'}", from=1-2, to=1-3]
	\arrow[from=2-1, to=2-2]
	\arrow[equals, from=2-2, to=1-2]
\end{tikzcd}\]

The top row comes from local cohomology sequences on  $\Yhat$ by noting $T_V\cong  \Omega_{V}^{n-1}\otimes \omega_V^{-1}\cong\G|_V.$ The second vertical map is an isomorphism by excision.

Now note that $H^2(\G)=H^2(\Omega_{\Yhat}^{n-1}(\log E)(-E)\otimes \pi^*\omega_Y^{-1})=\HH^{2}(\underline\Omega_Y^{n-1}\otimes \omega_Y^{-1})=0$ by a Nakano-type vanishing for Du Bois complexes \cite[Theorem V.5.1]{guillen2006hyperresolutions} (cf. also \cite[Proof of Prop 7.30]{peters2008mixed}). Therefore, $h$ is surjective. 
We now make use of Lemma 
\ref{lemma: smoothinglci}. Suppose by contradiction that the image of $\TT^1_Y$ in $T^1_{Y,x}\cong H^1(U_x,T_{U_x})$ is contained in $m_xT^1_{Y,x},$ where $x$ is not 1-rational. Then, the image of $\TT^1_Y$ in $ H^2_{E_x}(\G)$ is contained in $m_xH^2_{E_x}(\G).$ But this cannot happen since $h$ is surjective and $H^2_{E_x}(\G)\cong  H^2_{E_x}(\Omega_{\Xhat_x}^{n-1}(\log E_x)(-E_x))$ is nonzero by the 1-irrational assumption. Therefore, we have found a deformation that smooths out the 1-irrational points.
\end{proof}
\begin{remark}\label{remark: rigidFano}
It is known by a result of Totaro \cite[Theorem 5.1]{Totaro} that a toric Fano variety, which is smooth in codimension 2
and $\QQ$-factorial in codimension 3 is rigid. Let us briefly explain why this does not contradict Theorem \ref{thm: fanothm}.

First, we claim that if $Y$ is a singular toric threefold with isolated singularities that is $\QQ$-factorial in codimension 3 (so, $\QQ$-factorial) then it is not lci. Since $Y$ is simplicial
it has pre-1-rational singularities \cite[Proposition E]{shen2023k}. Hence, if it were lci it would have 1-rational singularities. But threefolds with 1-rational singularities must be smooth.

If $Y$ is a singular toric variety of dimension $\geq 4$ with isolated singularities, then we claim that again it is not lci. By a result of Grothendieck \cite[Corollary 3.14]{gro} (also known as Samuel's conjecture), since $Y$ is factorial in codimension 3, it is in fact factorial. Therefore, $Y$ is a simplicial toric variety with isolated singularities, so its singularities are 1-rational \cite[Proposition E]{shen2023k}. But lci toroidal singularities cannot be 1-rational \cite[Corollary 1.6]{QBM}. We conclude that one cannot apply Totaro's result in the lci isolated case.

\end{remark}

\section{Smoothing Calabi-Yau varieties}

\paragraph{\textbf{Notation}} Let $Y$ be a singular Calabi-Yau variety which has isolated Du Bois lci singularities. Let $\pi: \Yhat \to Y$ be a strong log resolution with exceptional divisor $E.$ If $x$ is a singularity, let $(X_x, x)$ denote its germ, where $X_x$ is a contractible Stein representative. By restriction we have a resolution $\pi: \Xhat_x \to X_x$ with exceptional divisor $E_x = \pi^{-1}(x).$ Let $U_x:=X_x\setminus \{x\}$ and $V := Y \setminus \text{Sing}(Y).$

 \begin{remark}\label{remark: analyticsp}
While, for simplicity, we will assume the Calabi-Yau variety $Y$ is a projective
algebraic variety over $\CC$ throughout, as noted in Definition \ref{def: CYvar}, to obtain the smoothing results, one could also take $Y$ to be a compact complex analytic variety with isolated singularities that admits a resolution satisfying the $\partial\overline{\partial}$-lemma as in \cite{friedman2025deformations}. In this case, one can construct an analogue of the Du Bois complex with Hodge spectral sequence that degenerates at $E_1$ (see \cite[Remark 4.3.]{friedman2024higher}).
\end{remark}

\begin{remark}\label{rmk: projsm}
When $Y$ is Calabi-Yau and projective, to get projective smoothings one should also assume along with $H^1(\mathcal{O}_Y)=0$ that, for instance, $H^2(\mathcal{O}_Y)=0,$ although this is not necessary if one only needs analytic smoothings.
\end{remark}

To produce smoothings we need the following unobstructedness result due to Friedman:
\begin{theorem}\textup{(\cite[Theorem 1.6]{friedman2025unobstructed})}
Let $Y$ be a singular Calabi-Yau variety of dimension $n \ge 3$, such that the singularities of $Y$ are isolated Du Bois and lci. Then the deformations of $Y$ are unobstructed.
\end{theorem}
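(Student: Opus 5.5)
The plan is to verify the $T^1$-lifting criterion of Ran and Kawamata (in the form of Fantechi--Manetti). Let $A_k=\CC[t]/(t^{k+1})$ and let $Y_k\to \operatorname{Spec}A_k$ be a deformation of $Y$. It suffices to show two things. First, $T^1(Y_k/A_k)$ is a free $A_k$-module. Second, it is compatible with base change, in the sense that $T^1(Y_k/A_k)\to T^1(Y_{k-1}/A_{k-1})$ is surjective. Throughout, let $V=Y\setminus \mathrm{Sing}(Y)$ and let $V_k\subset Y_k$ be the corresponding open set.

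The first step is to make $T^1$ concrete. Since the singularities are isolated lci and $n\ge 3$, the sheaves $\Omega^1_{Y_k/A_k}$ have depth $\ge n-1\ge 2$ at the singular points (Greuel, relatively). So, as in the global Schlessinger identification recalled before Theorem \ref{thm: fanothm} (\cite[Lemma 4.1]{friedman2025deformations}),
\[
T^1(Y_k/A_k)\cong H^1(V_k, T_{V_k/A_k}).
\]
The relative dualizing sheaf $\omega_{Y_k/A_k}$ is invertible and lifts $\omega_Y\cong\mathcal O_Y$. The hypothesis $H^1(\mathcal O_Y)=0$, together with the base change property of $H^0(\mathcal O)$ for Du Bois $Y$ (Du Bois singularities give that $H^i(\mathcal O_{Y_k})$ is free and commutes with base change), shows that $\omega_{Y_k/A_k}\cong \mathcal O_{Y_k}$. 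Contracting with a trivializing relative volume form then gives
\[
T^1(Y_k/A_k)\cong H^1(V_k,\Omega^{n-1}_{V_k/A_k}).
\]

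The second and main step is to show that $H^1(V_k,\Omega^{n-1}_{V_k/A_k})$ is $A_k$-free and commutes with base change. I would compare it with Hodge-theoretic objects on a resolution. Fix $\pi:\Yhat\to Y$ a strong log resolution with exceptional divisor $E$. Since the deformation is trivial near $E$ only after excision, I would work with local cohomology along $\mathrm{Sing}(Y)$. The local cohomology sequence on $Y_k$ relates $H^1(V_k,\Omega^{n-1}_{V_k/A_k})$ to global terms and to local terms $H^2_x$.

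For the global terms I would use a relative Du Bois complex $\DB^{\bullet}_{Y_k/A_k}$ (via the relative logarithmic de Rham complex of a simultaneous log model, or via the filtered de Rham complex of $Y_k$). Its hypercohomology $\HH^{p+q}$ is computed by a Hodge-to-de Rham spectral sequence. Because the underlying de Rham cohomology is that of the constant local system over the infinitesimal base, this cohomology is $A_k$-free of the same rank as for $Y$. Degeneration at $E_1$ for $Y$ (property (3) of \S\ref{subsect: Hodge}) then forces every $\HH^q(\DB^p_{Y_k/A_k})$ to be free and to commute with base change, by Deligne's standard argument comparing lengths.

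For the local terms, the vanishing of Du Bois invariants (Theorem \ref{thm: duboisinv}) and of link cohomology outside degrees $0,n-1,n,2n-1$ gives that only the terms $H^2_E(\Omega^{n-1}(\log E)(-E))$ and $H^1$ of the log sheaf can intervene. Here the local deformations of lci germs are unobstructed, so $T^1_{(X,x)}$ of the relative germ is $A_k$-flat (it is the relative Jacobian module of a complete intersection over a smooth miniversal base).

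The obstacle is gluing these two pieces: I must show that the connecting maps in the long exact sequence are compatible with base change, so that freeness of the outer terms propagates to $H^1(V_k,\Omega^{n-1}_{V_k/A_k})$. I would attempt this by proving surjectivity of the restriction maps degree by degree, using lengths. Concretely, I would check that $\operatorname{length}\, T^1(Y_k/A_k)=(k+1)\dim \TT^1_Y$ by exhibiting the long exact sequence as an extension of free modules, with the key vanishing supplied by the Du Bois and lci hypotheses (the extra vanishing $b^{p,q}=0$ for $p+q\notin\{n-1,n\}$). If that is too delicate, the fallback is Friedman's approach of first proving the analogous statement for the log Calabi--Yau pair $(\Yhat,E)$ and descending it.
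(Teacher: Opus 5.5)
\textbf{There is a genuine gap: the central step is never proved, and the route sketched toward it cannot work.}

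The paper gives no proof of this statement. It quotes it from Friedman and only indicates, in the remark after it and in the Fano discussion, that Friedman argues by $T^1$-lifting, with a key comparison lemma resting on $\H^{n-3}(\DB^1_Y)=0$. Your set-up is compatible with that: $T^1$-lifting, $\omega_{Y_k/A_k}\cong\mathcal O_{Y_k}$ from $H^1(\mathcal O_Y)=0$, and $T^1(Y_k/A_k)\cong H^1(V_k,\Omega^{n-1}_{V_k/A_k})$.

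The decisive claim, that $H^1(V_k,\Omega^{n-1}_{V_k/A_k})$ is $A_k$-free and commutes with base change, is never established. You end by deferring to ``Friedman's approach'', i.e.\ to the theorem itself. Three specific problems with your sketch:
\begin{enumerate}
\item The relative $T^1$ of an lci germ over $A_k$ need \emph{not} be $A_k$-flat. Take $f=\sum_{i=1}^{n+1}x_i^2$ deformed to $f-t$ over $A_k$ with $k\ge 1$. Then $T^1(X_k/A_k)=A_k\{x_1,\dots,x_{n+1}\}/(f-t,\partial_1f,\dots,\partial_{n+1}f)\cong A_k/(t)\cong\CC$. For the miniversal family the relative $T^1$ is supported over the discriminant. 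So the local contributions are not free, and global freeness cannot come from an ``extension of free modules''.
\item The non-free local part genuinely matters. The spectral sequence in the proof of Proposition \ref{prop:0map}, together with Steenbrink's extra vanishing, shows that $H^{n-1}(Y,\Omega^1_Y)$ (the Serre dual of $\TT^1_Y$) is an extension of $\HH^{n-1}(Y,\DB^1_Y)$ by $\bigoplus_x H^{n-2}(\Omega^1_{\Xhat_x}(\log E_x)(-E_x))$. Even a relative Du Bois complex would therefore control only the Hodge-theoretic quotient once some point is not $1$-Du Bois.
\item There is no simultaneous log model. A log resolution of $Y$ does not deform along a $Y_k$ that begins to smooth the singular points. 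So the groups $H^2_E(\Omega^{n-1}_{\Yhat}(\log E)(-E))$ have no counterpart over $A_k$, and the Du Bois complex of the non-reduced $Y_k$ is just $\DB^\bullet_Y$.
\end{enumerate}

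\textbf{The missing idea is to dualize and work one degree lower, where the local pieces drop out.} Since $Y_k$ is lci, $\Omega^1_{Y_k/A_k}$ is $A_k$-flat. From $0\to\mathcal O_Y\xrightarrow{t^k}\mathcal O_{Y_k}\to\mathcal O_{Y_{k-1}}\to 0$, the cokernel of $T^1(Y_k/A_k)\to T^1(Y_{k-1}/A_{k-1})$ is the kernel of $\TT^2_Y\to\Ext^2_{Y_k}(\Omega^1_{Y_k/A_k},\mathcal O_{Y_k})$. Relative duality over the Gorenstein Artinian ring $A_k$, using $\omega_{Y_k/A_k}\cong\mathcal O_{Y_k}$, then shows that $T^1$-lifting is equivalent to surjectivity of the restriction $H^{n-2}(Y_k,\Omega^1_{Y_k/A_k})\to H^{n-2}(Y,\Omega^1_Y)$.

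This is where the hypotheses enter, exactly as in the proof of Proposition \ref{prop:0map}. The vanishing $\H^{n-3}(\DB^1_Y)=0$ (isolated lci) and the Du Bois property (Popa--Shen--Vo) give $H^{n-2}(Y,\Omega^1_Y)\cong\HH^{n-2}(Y,\DB^1_Y)$, a Hodge piece of $H^{n-1}(Y,\CC)$. To lift classes, combine this with:
\begin{itemize}
\item $E_1$-degeneration;
\item the map $H^{n-1}(Y,\CC)\otimes A_k\to\HH^{n-1}(Y_k,[\mathcal O_{Y_k}\to\Omega^1_{Y_k/A_k}])$;
\item base change for $H^{i}(\mathcal O_{Y_k})$.
\end{itemize}
A diagram chase then lifts every class of $H^{n-2}(Y,\Omega^1_Y)$ to $Y_k$. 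That argument, not a gluing of free pieces in degree $n-1$, is what your proposal needs.
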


\begin{remark}
 Although we will not make use of this here, we note that one can extend Friedman's result outside of the lci case as in our unobstructedness result in the Fano case (Theorem \ref{thm: unobstrnonlci}). Indeed, if $n\geq 4$ one can replace the lci assumption by the condition $\text{lcdef}(Y)=0$ and $\text{depth}(\H^0(\DB^1_Y))\geq n-1.$ The idea is to proceed via $T^1$ lifting as in the Proof of Theorem 1.6 of \cite[page 13]{friedman2025unobstructed}. One can check that the conclusion of Lemma 2.6 in \cite{friedman2025unobstructed} still holds under our assumptions as $\H^{n-3}(\DB^1_Y)=0$ by \cite[Proposition F]{popa2024injectivity}.
\end{remark}

The following proposition will allow us to lift appropriate first-order local deformations to global deformations. 
\begin{proposition}\label{prop:0map}

Suppose $Y$ is a projective variety with isolated Du Bois lci singularities. Then the morphism $$H^2_{E}(\Omega^{n-1}_{\Yhat}(\log E))\cong 
\bigoplus\limits_{x \in \mathrm{Sing}(Y)} H^2_{E_x}(\Omega^{n-1}_{\Xhat_x}(\log E_x)) \longrightarrow H^2(\widehat{Y}, \Omega^{n-1}_{\Yhat}(\log E))
$$ is zero.
\end{proposition}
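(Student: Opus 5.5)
The plan is to prove the equivalent statement that the map $H^1(\Yhat,\Omega^{n-1}_{\Yhat}(\log E)) \to H^1(V,\Omega^{n-1}_V)$ is surjective. Indeed, the local cohomology long exact sequence on $\Yhat$ for $E\subset \Yhat$ (with $\Yhat\setminus E\cong V$) reads
$$H^1(\Yhat,\Omega^{n-1}_{\Yhat}(\log E)) \to H^1(V,\Omega^{n-1}_V)\to H^2_E(\Omega^{n-1}_{\Yhat}(\log E))\to H^2(\Yhat,\Omega^{n-1}_{\Yhat}(\log E)),$$
and the last map is zero exactly when the first is surjective. Here the excision identification $H^2_E\cong\bigoplus_x H^2_{E_x}$ is standard.

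Dually, I would compare the Hodge theory of $V$ with that of $(\Yhat,E)$. The groups $H^q(\Yhat,\Omega^p_{\Yhat}(\log E))$ compute $\Gr_F^p H^{p+q}(V)$, by Deligne's theory for the log pair, with $E_1$-degeneration. So the target of the map we must kill is $\Gr_F^{n-1}H^{n+1}(V)$, and the map $H^2_E(\Omega^{n-1}_{\Yhat}(\log E))\to \Gr_F^{n-1}H^{n+1}(V)$ should be the $\Gr_F^{n-1}$ of a map of mixed Hodge structures. The natural candidate is the topological sequence
$$H^n(V)\to H^n(L)\to H^{n+1}_c(V)\to \cdots,$$
or its variant $H^{n+1}_{Z}(Y)\to H^{n+1}(Y)$. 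My approach is the following.

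\textbf{Step 1.} Express $H^2_{E_x}(\Omega^{n-1}_{\Xhat_x}(\log E_x))$ Hodge-theoretically. By duality with $H^{n-2}(\Omega^1_{\Xhat_x}(\log E_x)(-E_x))$, and using the triangle $\mathbf{R}\pi_*\Omega^1(\log E)(-E)\simeq\DB^1$, it is dual to a graded piece of the local cohomology $H^{n-1}_{\{x\}}$, or equivalently to a piece of $H^{n-1}$ of the link, via the exceptional-divisor sequence. Theorem \ref{thm: duboisinv} and the vanishing of the link cohomology outside degrees $\{0,n-1,n,2n-1\}$ confine everything to degrees $n-1,n$. The upshot should be that $H^2_E(\Omega^{n-1}(\log E))\cong \Gr_F^{n-1}H^{n+1}_E(\Yhat,\ldots)$, which lives in a mixed Hodge structure supported on $H^{n+1}_{Z}(Y)\cong \bigoplus_x H^n(L_x)$.

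\textbf{Step 2.} Show the relevant topological map $\bigoplus_x H^n(L_x)\to H^{n+1}(V)$ has vanishing $\Gr_F^{n-1}$ on the piece coming from $H^2_E$. I would argue by weights. $H^n(L_x)$ has weights $\ge n+1$ (semipurity of the link in degrees $\ge n$). The image in $H^{n+1}(V)$ factors through $H^{n+1}_c(V)=H^{n+1}(Y,Z)=H^{n+1}(Y)$, which has weights $\le n+1$. Hence only the weight $n+1$ part survives, and it lands in $W_{n+1}$. Then one must show that the part of $\Gr_F^{n-1}$ detected by $H^2_E(\Omega^{n-1}(\log E))$, i.e.\ the quotient of $H^2_E(\Omega^{n-1}(\log E)(-E))$ by $\Gr_F^{n-1}H^n(L)$ given by the exact sequence recalled in Section \ref{sec: 2.3}, carries weights strictly larger than $n+1$, or otherwise maps to zero. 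Alternatively, I would use that $Y$ being Du Bois with isolated lci singularities makes $H^{n+1}(Y)\to H^{n+1}(V)$ factor compatibly so that the composite vanishes by strictness.

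\textbf{Main obstacle.} The hard step is pinning down precisely which Hodge-theoretic object $H^2_E(\Omega^{n-1}_{\Yhat}(\log E))$ is, namely as $\Gr_F^{n-1}$ of which weight-graded piece, so that the weight argument applies. I would first try to identify it, via the short exact sequence with $\Gr_F^{n-1}H^n(L)$, as $\Gr_F^{n-1}$ of $H^{n+1}_E(\Yhat)$ modulo the link contribution. Then I would use that $H^{n+1}_E(\Yhat)\to H^{n+1}(\Yhat)$ followed by restriction to $V$ is zero, since $H^{n+1}_E(\Yhat)\to H^{n+1}(\Yhat)\to H^{n+1}(V)$ is a composite of consecutive maps in an exact sequence. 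Strictness of morphisms of mixed Hodge structures with respect to $F$ would then give the vanishing on $\Gr_F^{n-1}$. Projectivity is used for $E_1$-degeneration and strictness.
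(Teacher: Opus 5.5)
\textbf{Your opening reduction is wrong.} In the exact sequence $H^1(\Yhat,\Omega^{n-1}_{\Yhat}(\log E))\to H^1(V,\Omega^{n-1}_V)\to H^2_E(\Omega^{n-1}_{\Yhat}(\log E))\to H^2(\Yhat,\Omega^{n-1}_{\Yhat}(\log E))$, the last map is zero if and only if the \emph{middle} map $H^1(V,\Omega^{n-1}_V)\to H^2_E(\Omega^{n-1}_{\Yhat}(\log E))$ is surjective. Surjectivity of the first map instead says the middle map is zero. So the statement you set out to prove is not equivalent to the proposition. It is actually false whenever the proposition has content. If some $x$ is not $1$-Du Bois, then $H^2_{E_x}(\Omega^{n-1}_{\Xhat_x}(\log E_x))\neq 0$. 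The proposition then makes $H^1(V,\Omega^{n-1}_V)\to H^2_E(\Omega^{n-1}_{\Yhat}(\log E))$ surjective onto a nonzero space, which is exactly how it is used in Theorem~\ref{thm: lcinot1DB}. Hence $H^1(\Yhat,\Omega^{n-1}_{\Yhat}(\log E))\to H^1(V,\Omega^{n-1}_V)$ cannot be surjective.

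\textbf{The Hodge-theoretic part also does not work as planned.} The coherent groups $H^q_E(\Omega^p_{\Yhat}(\log E))$ are not $\Gr_F$ of topological local cohomology. Note that $\mathbf{R}\Gamma_E$ of the log de Rham complex is $\mathbf{R}\Gamma_E\mathbf{R}j_*\CC_V=0$, although these groups are nonzero. So your strictness argument has nothing to act on. Concretely, take the cone over a smooth cubic surface $S\subset\mathbb{P}^3$, with $n=3$, $\Xhat=\mathrm{Tot}(\mathcal O_S(-1))$ and $E\cong S$. Then $H^2_E(\Omega^2_{\Xhat}(\log E))$ is dual to $H^1(\Omega^1_{\Xhat}(\log E)(-E))$, which is $5$-dimensional. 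On the other hand, $\Gr_F^2H^4_E(\Xhat)\cong H^{1,1}(S)$ is $7$-dimensional, and its quotient by $\Gr_F^2H^3(L)\cong H^{1,1}(S)_{\mathrm{prim}}$ is $1$-dimensional.

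Had your last step worked, the vanishing would follow from the exactness of $H^{n+1}_E(\Yhat)\to H^{n+1}(\Yhat)\to H^{n+1}(V)$ alone. That holds for any $Y$ with isolated singularities and never uses the Du Bois hypothesis. Similarly, the topological map in Step 2 is the composite of consecutive maps $H^n(L)\to H^{n+1}_c(V)\to H^{n+1}(V)$. It vanishes for trivial reasons, so the weight discussion says nothing about the coherent map.

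\textbf{The missing idea is to stay on the coherent side, where your Step 1 was heading.} By Serre duality on $\Yhat$ and local duality on each $\Xhat_x$, the proposition is equivalent to the vanishing of the edge map $\HH^{n-2}(Y,\DB^1_Y)=H^{n-2}(\Yhat,\Omega^1_{\Yhat}(\log E)(-E))\to H^0(Y,\H^{n-2}\DB^1_Y)=\bigoplus_x H^{n-2}(\Omega^1_{\Xhat_x}(\log E_x)(-E_x))$. This is the edge map of the spectral sequence $E_2^{p,q}=H^p(Y,\H^q\DB^1_Y)\Rightarrow\HH^{p+q}(Y,\DB^1_Y)$. Here $E_2^{p,q}=0$ for $p,q>0$, since $\H^q\DB^1_Y$ has finite support for $q>0$.

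The group $E_2^{n-2,0}=H^{n-2}(Y,\H^0\DB^1_Y)$ survives to $E_\infty$ and so injects into $\HH^{n-2}(Y,\DB^1_Y)$. For $n=3$ this is automatic. For $n>3$ the only possible differential comes from $E_2^{0,n-3}=H^0(\H^{n-3}\DB^1_Y)$, which vanishes for isolated lci singularities by Steenbrink's vanishing of Du Bois invariants. The Du Bois hypothesis makes this injection surjective, by Popa--Shen--Vo; this is where projectivity and $E_1$-degeneration enter. Hence $\HH^{n-2}(Y,\DB^1_Y)=E_\infty^{n-2,0}$, so $E_\infty^{0,n-2}=0$, which is exactly the vanishing of the edge map.
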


\begin{proof}

Note that by Serre duality $H^2(\widehat{Y}, \Omega^{n-1}_{\Yhat}(\log E))$ is dual to 
$H^{n-2}(\Omega_{\Yhat}^1(\log E)(-E))$ and by local duality $H^2_{E_x}(\Omega^{n-1}_{\Xhat_x}(\log E_x))$ is dual to $H^{n-2}(\Omega^{1}_{\Xhat_x}(\log E_x )(-E_x)).$

Therefore, dually, the statement of the proposition is equivalent to showing that 
$$
H^{n-2}(\Omega_{\Yhat}^1(\log E)(-E)) \rightarrow 
\bigoplus\limits_{x \in \mathrm{Sing}(Y)} H^{n-2}(\Omega^{1}_{\Xhat_x}(\log E_x )(-E_x))
$$
is zero. 

Consider the following hypercohomology spectral sequence 
$$
E_2^{p,q}=H^p(\H^q \DB_Y^1) \Rightarrow \HH^{p+q}(\DB_Y^1)
.$$ Since the singularities of $Y$ are isolated, we have 
$\DB_Y^1 \simeq \RR \pi_* \Omega_{\Yhat}^1(\log E)(-E),$ and so, 
the spectral sequence becomes 
$$
E_2^{p,q}=H^p(R^q \pi_{*} \Omega_{\Yhat}^1 (\log E)(-E)) 
\Rightarrow H^{p+q}
(\Omega_{\Yhat}^1 (\log E)(-E)).$$
For $q>0$, the sheaves $R^q\pi_*\Omega^1_{\widehat{Y}}(\log E)(-E)$ are supported on finitely many points, hence $H^p(R^q\pi_*\Omega^1_{\widehat{Y}}(\log E)(-E))=0$ for $p>0$. Therefore $E_2^{p,q}=0$ whenever both $p,q>0.$ Note, $R^{n-2}\pi_*\Omega^1_{\widehat{Y}}(\log E)(-E)$ is a skyscraper sheaf with stalk at $x$ equal to $H^{n-2}(\Omega^1_{\widehat{X}}(\log E_x)(-E_x)).$ Hence,
\[
H^0(R^{n-2}\pi_*\Omega^1_{\widehat{Y}}(\log E)(-E))\cong \bigoplus_{x\in \mathrm{Sing}(Y)} H^{n-2}(\Omega^1_{\Xhat_x}(\log E_x)(-E_x)).
\]

We would like to show 
\begin{align*}
H^{n-2}(\Omega_{\Yhat}^1(\log E)(-E))
= E_\infty^{n-2}
\rightarrow E_2^{0,n-2}
&= H^0(R^{n-2}\pi_*\Omega^1_{\Yhat}(\log E)(-E)) \\
&= \bigoplus\limits_{x \in \mathrm{Sing}(Y)} H^{n-2}(\Omega^{1}_{\Xhat_x}(\log E_x)(-E_x))
\end{align*}
is zero. In other words, we want to show $E_\infty^{0,n-2}=0.$ 

Note $E_2^{n-2,0}=E_\infty^{n-2,0}$ which is immediate for $n=3$ and for $n>3$ follows from
$$
E_2^{0,n-3}= \bigoplus\limits_{x \in \mathrm{Sing}(Y)} H^{n-3}(\Omega^{1}_{\Xhat_x}(\log E_x )(-E_x)) =0
$$
by \cite[Theorem 5]{steenbrink1997bois}, as the singularities are isolated lci. Hence, we obtain an injection $E_2^{n-2,0} =E_\infty^{n-2,0} \ito E_\infty^{n-2}= \HH^{n-2}(\DB_Y^1).$

However, by \cite[Proposition 5.2]{popa2024injectivity} since the singularities are Du Bois one has the map $E_\infty^{n-2,0}=E_2^{n-2,0}= H^{n-2}(\H^0(\DB_Y^1))\rightarrow E_\infty^{n-2}= \HH^{n-2}(\DB_Y^1)$ is also surjective, and so, it is an isomorphism (cf. also \cite[Remark 2.5]{friedman2025unobstructed}). Note \cite[Proposition 5.2]{popa2024injectivity} makes use of the $E_1$-degeneration of the Hodge-to-de Rham spectral sequence. We conclude $E_{\infty}^{0,n-2}=0$ and so the morphism we were interested in is zero.
\end{proof}

The first result concerns smoothing a projective Calabi-Yau variety whose singularities are not 1-Du Bois. In this case, no global condition is needed to show the variety is smoothable.

\begin{theorem}\label{thm: lcinot1DB}
Suppose $Y$ is a singular Calabi-Yau variety with isolated Du Bois lci singularities. Then $Y$ can be deformed to a Calabi-Yau variety whose singularities are 1-Du Bois. Moreover, if one assumes in addition that none of the singularities of $Y$ are 1-Du Bois, then $Y$ is smoothable, and every small smoothing is Calabi-Yau.
\end{theorem}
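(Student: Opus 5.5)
The plan mirrors the proof of Theorem \ref{thm: fanothm}, with the Calabi-Yau condition $\omega_Y\cong\mathcal O_Y$ replacing the twist by $\omega_Y^{-1}$ and Proposition \ref{prop:0map} replacing Nakano-type vanishing. First, 1-Du Bois singularities remain 1-Du Bois under small deformations, by semicontinuity of the minimal exponent (as in the Fano case). It therefore suffices to produce a first-order global deformation whose image in $T^1_{Y,x}$ does not lie in $m_xT^1_{Y,x}$ for every non-1-Du Bois point $x$. Friedman's unobstructedness theorem makes $\mathrm{Def}(Y)$ smooth, so Lemma \ref{lemma: smoothinglci}, applied with $S'=\mathrm{Def}(Y)$, then shows that a general nearby fiber is smooth at those points. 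The Calabi-Yau condition on small deformations follows as usual: $\omega$ stays trivial and $H^1(\mathcal O)=0$ is preserved, since Du Bois singularities give $\mathcal O_Y$-cohomology that is constant in families.

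For the lifting step I use the identifications $\TT^1_Y\cong H^1(V,T_V)$ and $H^0(T^1_Y)\cong\bigoplus_x H^1(T_{U_x})$. Since $\omega_Y$ is trivial, $T_V\cong\Omega^{n-1}_V\cong \Omega^{n-1}_{\Yhat}(\log E)|_V$. The local cohomology sequence on $\Yhat$ for $\Omega^{n-1}_{\Yhat}(\log E)$ then gives
\[
H^1(V,T_V)\xto{f} H^2_E(\Omega^{n-1}_{\Yhat}(\log E))\xto{f'} H^2(\Omega^{n-1}_{\Yhat}(\log E)),
\]
and by excision the middle term is $\bigoplus_x H^2_{E_x}(\Omega^{n-1}_{\Xhat_x}(\log E_x))$. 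This fits into a commutative square with $\eta:\TT^1_Y\to\bigoplus_x T^1_{Y,x}$ and the local surjections $s_x: H^1(T_{U_x})\to H^2_{E_x}(\Omega^{n-1}_{\Xhat_x}(\log E_x))$ coming from the short exact sequence in Section \ref{sec: 2.3}. Proposition \ref{prop:0map} gives $f'=0$, so $f$ is surjective.

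To conclude, suppose that for some non-1-Du Bois $x$ the image of $\eta$ in $T^1_{Y,x}$ lies in $m_xT^1_{Y,x}$. Because $s_x$ is $\mathcal O_{X,x}$-linear, the image of $f$ in the $x$-summand would then lie in $m_xH^2_{E_x}(\Omega^{n-1}_{\Xhat_x}(\log E_x))$. But $f$ is surjective onto that summand, and the summand is a nonzero module by Definition \ref{def: 1DB}. Since $H^2_{E_x}$ is finite length (it is dual to a finite-dimensional $H^{n-2}$, supported at $x$), Nakayama gives $m_xM\neq M$ for this nonzero module $M$, which is a contradiction.

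Lemma \ref{lemma: smoothinglci} is applied separately at each point, so having a smoothing direction at each $x$ is enough: a general element of $\TT^1_Y$ avoids the finitely many proper subspaces $\eta_x^{-1}(m_xT^1_{Y,x})$, so a single general one-parameter deformation works for all points simultaneously. The main point needing care is the compatibility of the square, namely that the map $H^1(T_{U_x})\to H^2_{E_x}$ is the module-linear quotient map and agrees with the restriction of $f$ under the trivialization of $\omega_Y$ near $x$. The substantive input, however, is Proposition \ref{prop:0map}, which is already proved.
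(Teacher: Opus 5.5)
Your proposal is correct and follows essentially the same route as the paper. Both arguments use the local cohomology sequence for $\Omega^{n-1}_{\Yhat}(\log E)$, invoke Proposition \ref{prop:0map} to make $f\colon \TT^1_Y\to\bigoplus_x H^2_{E_x}(\Omega^{n-1}_{\Xhat_x}(\log E_x))$ surjective, use that these summands are nonzero at non-1-Du Bois points, and apply Lemma \ref{lemma: smoothinglci} on the smooth base $\mathrm{Def}(Y)$. The only difference is cosmetic: you argue the last step by contradiction (as the paper does in the Fano case), while the paper directly lifts an element $\beta'=\oplus\beta'_x$ with $\beta'_x\notin m_xH^2_{E_x}(\Omega^{n-1}_{\Xhat_x}(\log E_x))$.
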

\begin{proof}

As in the proof of Theorem \ref{thm: fanothm}, it suffices to prove that one can find a deformation which smooths out the points that are not 1-Du Bois. Recall one can identify $T^1_{Y,x}$ with $H^1(U_x, \Omega^{n-1}_{U_x})$ and, moreover, a global version of this says $\TT^1_Y \cong H^1(V,\Omega_{V}^{n-1})$ (see \cite[Lemma 4.1]{friedman2025deformations}).

Consider the following commutative diagram:
\[\begin{tikzcd}
	{\TT^1=H^1(V,\Omega_{V}^{n-1})} & {H^2_E(\Omega_{\Yhat}^{n-1}(\log E))} & {H^2(\Omega_{\Yhat}^{n-1}(\log E))} & {} \\
	{\TT^1=H^1(V,\Omega_{V}^{n-1})} & {H^2_E(\Omega_{\Yhat}^{n-1})} & {H^2(\Omega_{\Yhat}^{n-1})} \\
	{H^0(T^1_Y)\cong H^2_Z(T^0_Y)\cong\bigoplus H^1(U_x, \Omega^{n-1}_{U_x})}
	\arrow["f"', from=1-1, to=1-2]
	\arrow[equals, from=1-1, to=2-1]
	\arrow["{f'}"', from=1-2, to=1-3]
	\arrow["g"{pos=0.7}, from=2-1, to=2-2]
	\arrow["\eta", from=2-1, to=3-1]
	\arrow["\psi", from=2-2, to=1-2]
	\arrow["{g'}", from=2-2, to=2-3]
	\arrow[from=2-3, to=1-3]
	\arrow["\varphi"{pos=0.7}, from=3-1, to=1-2]
	\arrow["\phi"'{pos=0.6}, from=3-1, to=2-2]
\end{tikzcd}\]
First, by excision $$H^2_E(\Omega_{\Yhat}^{n-1}(\log E))\cong \bigoplus\limits_{x \in \mathrm{Sing}(Y)} H^2_{E_x} (\Omega_{\Xhat_x}^{n-1}(\log E_x)).$$ Hence, the morphism $H^0(T^1_Y)\rightarrow H^2_E(\Omega_{\Yhat}^{n-1}(\log E))$ is given by $\bigoplus\limits_{x \in \mathrm{Sing}(Y)} \varphi_x$ where $\varphi_x$ is the composition  $$H^1(U_x, \Omega^{n-1}_{U_x})\xto{\phi_x} H^2_{E_x} (\Omega_{\Xhat_x}^{n-1}) \xto{\psi_x} H^2_{E_x} (\Omega_{\Xhat_x}^{n-1}(\log E_x)).$$ By Section \ref{sec: 2.3} we have the following short exact sequence $$0\rightarrow H^1 (\Omega_{\Xhat_x}^{n-1}(\log E_x)) \rightarrow H^1(U_x, \Omega^{n-1}_{U_x}) \xto{\varphi_x} H^2_{E_x} (\Omega_{\Xhat_x}^{n-1}(\log E_x)) \rightarrow 0.$$

For each singularity that is not 1-Du Bois we obtain that each $H^2_{E_x} (\Omega_{\Xhat_x}^{n-1}(\log E_x))$ is nonzero.

 The idea is to apply Lemma \ref{lemma: smoothinglci} to $S'=\text{Def}(Y)$, which is smooth by Friedman's unobstructedness Theorem \ref{thm: unobstructed}. Consider the composition $$T=\TT^1_Y= H^1(\Omega_{V}^{n-1}) \to T^1_{Y,x}\sto H_{E_x}^2(\Omega_{\Xhat_x}^{n-1}(\log E_x)).$$ From Lemma \ref{lemma: smoothinglci} if \[
\operatorname{im}(H^1(\Omega_{V}^{n-1}) \to T^1_{Y,x}) \not\subset m_x T^1_{Y,x}
\]
then we can find a deformation that smooths the singular point $x.$

Therefore, following the composition, we find that if \[
\operatorname{im}(H^1(\Omega_{V}^{n-1}) \to H_{E_x}^2(\Omega_{\Xhat_x}^{n-1}(\log E_x))) \not\subset m_x H_{E_x}^2(\Omega_{\Xhat_x}^{n-1}(\log E_x))
\]
then we can find a deformation that smooths the singular point $x$, which was not 1-Du Bois.

Consider $\beta'_x\in H^2_{E_x} (\Omega_{\Xhat_x}^{n-1}(\log E_x))$ but such that $\beta'_x\notin m_xH^2_{E_x} (\Omega_{\Xhat_x}^{n-1}(\log E_x)).$ 
Now, by Proposition \ref{prop:0map} the map $H^2_E(\Omega_{\Yhat}^{n-1}(\log E)) \rightarrow H^2(\Omega_{\Yhat}^{n-1}(\log E))$ is zero. Hence, the element $\beta'=\bigoplus\limits_{x \in \mathrm{Sing}(Y)} \beta'_x$ comes from a global first-order deformation and, therefore, \[
\operatorname{im}(H^1(\Omega_{V}^{n-1}) \to H_{E_x}^2(\Omega_{\Xhat_x}^{n-1}(\log E_x))) \not\subset m_x H_{E_x}^2(\Omega_{\Xhat_x}^{n-1}(\log E_x))
.\]

\end{proof}
\begin{remark} As mentioned in Remark \ref{remark: hypsmoothtotal}, if one further assumes that $Y$ has hypersurface singularities we can show a stronger result by using Proposition \ref{prop: cyclicmod}. Indeed, one shows there exists a flat analytic deformation over a disk $\Y \to \Delta $ of $Y$ with smooth total space $\Y$. Let us briefly explain the argument. 
Consider $ \alpha_x \in H^1(U_x, \Omega^{n-1}_{U_x})$ such that it is a first-order smoothing in the component $T^1_{Y,x}$ i.e. it is a generator of the cyclic $\mathcal{O}_{Y,x}$-module $T^1_{Y,x}.$  Let  $\beta_x= \varphi_x(\alpha_x) $ be its image in $H^2_{E_x} (\Omega_{\Xhat_x}^{n-1}(\log E_x)).$ We claim that $\beta_x\neq 0$ for each $x.$ Otherwise, $\alpha_x$ would be in the kernel of $H^1(U_x, \Omega^{n-1}_{U_x}) \rightarrow H^2_{E_x} (\Omega_{\Xhat_x}^{n-1}(\log E_x))$ which is a contradiction as that would force the kernel to be the whole $H^1(U_x, \Omega^{n-1}_{U_x})$ since $\alpha_x$ is a generator of the cyclic module. 

Now, as in the proof above, by Proposition \ref{prop:0map} the map 
\[H^2_E(\Omega_{\Yhat}^{n-1}(\log E)) \xto{f'} H^2(\Omega_{\Yhat}^{n-1}(\log E))\]
is zero. 

Hence, the element $\beta=\bigoplus\limits_{x \in \mathrm{Sing}(Y)} \beta_x$ comes from a global first-order deformation $y\in \TT^1$ i.e. $f(y)=\beta$. By Friedman's unobstructedness Theorem \ref{thm: unobstructed}, we find a global deformation whose Kodaira-Spencer class is $y.$ Let $\eta(y)=\oplus \gamma_x.$ By the commutativity of the diagram $\alpha_x$ and $\gamma_x$ both map to $\beta_x.$ Hence they differ by an element of $m_xT^1_{Y,x}$ and so $\gamma_x$ is a first-order smoothing as well. Hence, we have found a global 1-parameter deformation with smooth total space that smooths out every singular point.
\end{remark}

In the smoothing statement, we would now like to relax the not 1-Du Bois condition to not 1-rational. In other words, we would like to be able to smooth out 1-liminal points. As explained in Section \ref{sec: 1.3}, one should not expect a smoothing in the absence of additional global conditions. In the case of threefolds, Namikawa-Steenbrink show $\QQ$-factoriality is a global condition which, together with appropriate local conditions on the singularities, guarantees smoothability. 

In higher dimensions, we find that the appropriate global condition on $Y$ is the equality of Hodge-Du Bois numbers $\underline{h}^{n-1,2}=\underline{h}^{1,n-2},$ which for threefolds with rational singularities is equivalent to the familiar $\QQ$-factoriality condition (see \cite[Theorem E]{park2025q} and  \cite[Theorem 3.2]{namikawa1995global}).
\begin{theorem}\label{thm: lcigeneral}
Suppose $Y$ is a singular Calabi-Yau variety of dimension $n$ with isolated Du Bois lci singularities. Assume that we have the following equality of Hodge-Du Bois numbers $\underline{h}^{n-1,2}(Y)=\underline{h}^{1,n-2}(Y).$ Then $Y$ can be deformed to a Calabi-Yau whose singularities are 1-rational. Moreover, if one assumes in addition that none of the singularities of $Y$ are 1-rational, then $Y$ is smoothable, and every small smoothing is Calabi-Yau. 
\end{theorem}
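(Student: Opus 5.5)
We follow the template of Theorem \ref{thm: lcinot1DB}, replacing $\Omega^{n-1}_{\Yhat}(\log E)$ by $\Omega^{n-1}_{\Yhat}(\log E)(-E)$. By semicontinuity of the minimal exponent, 1-rational points stay 1-rational under small deformations. So it suffices to find a global first-order deformation whose image in each $T^1_{Y,x}$, for $x$ 1-irrational, avoids $m_xT^1_{Y,x}$. We then apply Lemma \ref{lemma: smoothinglci} with $S'=\mathrm{Def}(Y)$, which is smooth by Theorem \ref{thm: unobstructed}. Using $\TT^1_Y\cong H^1(V,\Omega^{n-1}_V)$, the local cohomology sequence for $\Omega^{n-1}_{\Yhat}(\log E)(-E)$ gives
\[
\TT^1_Y \xto{h} H^2_E(\Omega^{n-1}_{\Yhat}(\log E)(-E))\cong \bigoplus_x H^2_{E_x}(\Omega^{n-1}_{\Xhat_x}(\log E_x)(-E_x)) \xto{h'} H^2(\Omega^{n-1}_{\Yhat}(\log E)(-E)).
\]
By Section \ref{sec: 2.3}, each local map $T^1_{Y,x}\to H^2_{E_x}(\Omega^{n-1}_{\Xhat_x}(\log E_x)(-E_x))$ is surjective, and its target is nonzero exactly when $x$ is 1-irrational (Definition \ref{def: 1rat}). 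Hence if $h'=0$, we may lift an element $\beta=(\beta_x)$ with $\beta_x\notin m_x H^2_{E_x}(\cdots)$ for every 1-irrational $x$. The Nakayama-type argument from the proof of Theorem \ref{thm: fanothm} then shows the image of $\TT^1_Y$ in $T^1_{Y,x}$ is not contained in $m_xT^1_{Y,x}$.

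The key step, and the main obstacle, is to show $h'=0$ under $\underline{h}^{n-1,2}=\underline{h}^{1,n-2}$. Note that $H^2(\Omega^{n-1}_{\Yhat}(\log E)(-E))=\HH^2(\DB^{n-1}_Y)$, whose dimension is $\underline{h}^{n-1,2}$. The plan is to dualize. By Serre and local duality, $h'$ is dual to the restriction map
\[
r\colon H^{n-2}(\Omega^1_{\Yhat}(\log E))\to \bigoplus_x H^{n-2}(\Omega^1_{\Xhat_x}(\log E_x)).
\]
We run the Leray spectral sequence for $\RR\pi_*\Omega^1_{\Yhat}(\log E)$. By Remark \ref{rmk: nrvanish}, the higher direct images are skyscrapers, nonzero only in degrees $n-2,n-1$ (together with $R^{n-1}$, which vanishes for rational points). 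Also $\pi_*\Omega^1_{\Yhat}(\log E)=\Omega^1_Y$, since $\Omega^1_Y$ is reflexive. As in Proposition \ref{prop:0map}, $r=0$ is equivalent to the edge map $H^{n-2}(\Omega^1_Y)\to H^{n-2}(\Omega^1_{\Yhat}(\log E))$ being surjective.

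On the other side, we have $H^{n-2}(\Omega^1_Y)\cong \HH^{n-2}(\DB^1_Y)$ by the argument of Proposition \ref{prop:0map} (Popa--Shen--Vo surjectivity plus Steenbrink vanishing), and this space has dimension $\underline{h}^{1,n-2}$. We then compare $\HH^{n-2}(\DB^1_Y)=H^{n-2}(\Omega^1_{\Yhat}(\log E)(-E))\to H^{n-2}(\Omega^1_{\Yhat}(\log E))$. This map should be injective, because the preceding term involves $H^{n-3}(\Omega^1_{\Yhat}(\log E)|_E)$, which is built from $\Gr^1_F H^{n-2}(L_x)=0$ together with global terms of $E$ that need checking. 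If it is injective, then surjectivity of the composite is equivalent to
\[
\dim H^{n-2}(\Omega^1_{\Yhat}(\log E))=\underline{h}^{1,n-2}.
\]

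Finally, Serre duality on $\Yhat$ gives $\dim H^{n-2}(\Omega^1_{\Yhat}(\log E))=\dim H^2(\Omega^{n-1}_{\Yhat}(\log E)(-E))=\underline{h}^{n-1,2}$. Given injectivity, the hypothesis $\underline{h}^{n-1,2}=\underline{h}^{1,n-2}$ therefore forces the inclusion to be an isomorphism, so $r=0$ and hence $h'=0$. The delicate point is exactly this injectivity and identification of images, i.e. controlling $H^{n-3}(\Omega^1_{\Yhat}(\log E)|_E)$. If that fails, I would instead compare $\underline{h}^{n-1,2}$ and $\underline{h}^{1,n-2}$ through the exact sequence relating $\DB^{n-1}_Y$ and $\DD_Y(\DB^1_Y)$, whose cone is the sum of the local $\Gr_F H^n(L_x)$ terms, and read off the vanishing of the connecting map from the dimension equality. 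Once $h'=0$, the lifting argument above completes the proof, and every small smoothing is Calabi--Yau as in Theorem \ref{thm: lcinot1DB}.
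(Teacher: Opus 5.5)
Your proposal is correct and is essentially the Serre dual of the paper's argument. The paper proves $h'=0$ by combining $f'=0$ (Proposition \ref{prop:0map}) with the claim that $H^2(\Omega^{n-1}_{\Yhat}(\log E)(-E))\to H^2(\Omega^{n-1}_{\Yhat}(\log E))$ is surjective, because $\Gr_F^{n-1}H^{n+1}(L_x)=0$, and hence an isomorphism by the dimension count; this is exactly dual to your injectivity-plus-dimension argument together with the Popa--Shen--Vo isomorphism $H^{n-2}(\Omega^1_Y)\cong\HH^{n-2}(\DB^1_Y)$. The step you flagged as delicate is not an issue and your fallback is unnecessary: since $E=\bigsqcup_x E_x$, you have $H^{n-3}(\Omega^1_{\Yhat}(\log E)|_E)=\bigoplus_x H^{n-3}(\Omega^1_{\Xhat_x}(\log E_x)|_{E_x})$, each summand has dimension $\dim\Gr^1_F H^{n-2}(L_x)=0$, and there are no global terms.
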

\begin{proof}

As in the proof of Theorem \ref{thm: fanothm}, it suffices to prove that one can find a deformation which smooths out the points that are not 1-rational.

Consider the following commutative diagram with exact rows coming from the local cohomology sequence:


\[\begin{tikzcd}[column sep=0.8em, cells={nodes={scale=0.9}}]
	{\TT^1=H^1(V,\Omega^{n-1}_V)} & {\bigoplus\limits_{x \in \mathrm{Sing}(Y)} H^2_{E_x}(\Omega_{\Xhat_x}^{n-1}(\log E_x))} & {H^2(\Omega_{\Yhat}^{n-1}(\log E))} & {} \\
	{\TT^1=H^1(V,\Omega^{n-1}_V)} & {\bigoplus\limits_{x \in \mathrm{Sing}(Y)} H^2_{E_x}(\Omega_{\Xhat_x}^{n-1})} & {H^2(\Omega_{\Yhat}^{n-1})} \\
	{\TT^1=H^1(V,\Omega^{n-1}_V)} & {\bigoplus\limits_{x \in \mathrm{Sing}(Y)} H^2_{E_x}(\Omega_{\Xhat_x}^{n-1}(\log E_x)(-E_x))} & {H^2(\Omega_{\Yhat}^{n-1}(\log E)(-E))} \\
	{H^0(T^1_Y)\cong \bigoplus\limits_{x \in \mathrm{Sing}(Y)} H^1(U_x, \Omega^{n-1}_{U_x})}
	\arrow["f", from=1-1, to=1-2]
	\arrow[equals, from=1-1, to=2-1]
	\arrow["{f'}", from=1-2, to=1-3]
	\arrow["g", from=2-1, to=2-2]
	\arrow[ equals, from=2-1, to=3-1]
	\arrow["{\psi_x}", from=2-2, to=1-2]
	\arrow["{g'}", from=2-2, to=2-3]
	\arrow[from=2-3, to=1-3]
	\arrow["h", from=3-1, to=3-2]
	\arrow["\eta"', from=3-1, to=4-1]
	\arrow["{\iota_x}", from=3-2, to=2-2]
	\arrow["{h'}", from=3-2, to=3-3]
	\arrow[from=3-3, to=2-3]
	\arrow["{s_x}"'{pos=0.6}, from=4-1, to=3-2]
\end{tikzcd}\]

We now show that $h'$ is zero which then allows us to lift an appropriate element in each component to obtain a global smoothing.

From Proposition \ref{prop:0map} the morphism $
\bigoplus\limits_{x \in \mathrm{Sing}(Y)} H^2_{E_x}(\Omega^{n-1}_{\Xhat_x}(\log E_x)) \xto{f'} H^2( \Omega^{n-1}_{\Yhat}(\log E))
$ is zero and, therefore, so is the composition $$\bigoplus\limits_{x \in \mathrm{Sing}(Y)} H^2_{E_x}(\Omega^{n-1}_{\Xhat_x}(\log E_x)(-E_x)) \to  \bigoplus\limits_{x \in \mathrm{Sing}(Y)} H^2_{E_x}(\Omega^{n-1}_{\Xhat_x}(\log E_x)) \rightarrow H^2( \Omega^{n-1}_{\Yhat}(\log E)).$$

\textbf{Claim.} The equality $\underline{h}^{n-1,2}=\underline{h}^{1,n-2}$ implies $H^2( \Omega^{n-1}_{\Yhat}(\log E)(-E)) \to H^2( \Omega^{n-1}_{\Yhat}(\log E))$ is an isomorphism.

Assuming the claim for the moment, we obtain that the morphism $$\bigoplus\limits_{x \in \mathrm{Sing}(Y)} H^2_{E_x}(\Omega^{n-1}_{\Xhat_x}(\log E_x)(-E_x)) \xto{h'} H^2( \Omega^{n-1}_{\Yhat}(\log E)(-E))$$ is zero.

As before we apply Lemma \ref{lemma: smoothinglci} to $S'=\text{Def}(Y)$, which is smooth by Friedman's unobstructedness Theorem \ref{thm: unobstructed}. Consider the composition $$T=\TT^1_Y= H^1(\Omega_{V}^{n-1}) \to T^1_{Y,x}\sto H_{E_x}^2(\Omega_{\Xhat_x}^{n-1}(\log E_x)(-E_x)).$$ 

Therefore, following the composition, we find that if \[
\operatorname{im}(H^1(\Omega_{V}^{n-1}) \to H_{E_x}^2(\Omega_{\Xhat_x}^{n-1}(\log E_x)(-E_x))) \not\subset m_x H_{E_x}^2(\Omega_{\Xhat_x}^{n-1}(\log E_x)(-E_x))
\]
then we can find a deformation that smooths the singular point $x.$

Consider $\beta'_x\in H^2_{E_x} (\Omega_{\Xhat_x}^{n-1}(\log E_x)(-E_x))$ but such that $\beta'_x\notin m_xH^2_{E_x} (\Omega_{\Xhat_x}^{n-1}(\log E_x)(-E_x)).$ 
Now, $h'$ is zero. Hence, the element $\beta'=\bigoplus\limits_{x \in \mathrm{Sing}(Y)} \beta'_x$ comes from a global first-order deformation and, therefore, \[
\operatorname{im}(H^1(\Omega_{V}^{n-1}) \to H_{E_x}^2(\Omega_{\Xhat_x}^{n-1}(\log E_x)(-E_x))) \not\subset m_x H_{E_x}^2(\Omega_{\Xhat_x}^{n-1}(\log E_x)(-E_x))
.\] Hence, we can smooth out the 1-irrational points.

To prove the claim, let us consider the long exact sequence $$ H^1( \Omega^{n-1}_{\Yhat}(\log E)|_E)\to H^2( \Omega^{n-1}_{\Yhat}(\log E)(-E)) \to H^2( \Omega^{n-1}_{\Yhat}(\log E))\to H^2( \Omega^{n-1}_{\Yhat}(\log E)|_E)$$ and note that the last term $$H^2( \Omega^{n-1}_{\Yhat}(\log E)|_E)\cong \bigoplus\limits_{x \in \mathrm{Sing}(Y)} H^2( \Omega^{n-1}_{\Xhat_x}(\log E_x)|_{E_x})= \bigoplus\limits_{x \in \mathrm{Sing}(Y)} \Gr_{F}^{n-1} H^{n+1}(L_x).$$ But since the singularities are lci,  $H^{n+1}(L_x)=0$ and so $H^2( \Omega^{n-1}_{\Yhat}(\log E)|_E)=0.$ Therefore, we have a surjection $H^2( \Omega^{n-1}_{\Yhat}(\log E)(-E)) \sto H^2( \Omega^{n-1}_{\Yhat}(\log E)).$ Now notice that $$\dim  H^2( \Omega^{n-1}_{\Yhat}(\log E)(-E))= \dim \HH^2 (\DB^{n-1}_Y)= \underline{h}^{n-1,2}$$ and similarly by Serre duality $\dim H^2( \Omega^{n-1}_{\Yhat}(\log E))= \dim H^{n-2}(\Omega^{1}_{\Yhat}(\log E)(-E))= \underline{h}^{1,n-2}.$ By our assumption on the equality of the Hodge-Du Bois numbers, $$H^2( \Omega^{n-1}_{\Yhat}(\log E)(-E)) \sto H^2( \Omega^{n-1}_{\Yhat}(\log E))$$ must then be an isomorphism.

\end{proof}

\begin{remark}
In the previous theorem, if $n\geq 4$ one can check that the condition $\underline{h}^{n-1,2}=\underline{h}^{1,n-2}$ is in fact equivalent to the morphism $H^2_E(\Omega^{n-1}_{\Yhat}(\log E)(-E))\to H^2( \Omega^{n-1}_{\Yhat}(\log E)(-E))$ being zero. Indeed, assuming the morphism is zero we have that dually the morphism 
$$
H^{n-2}(\Omega_{\Yhat}^1(\log E)) \rightarrow 
\bigoplus\limits_{x \in \mathrm{Sing}(Y)} H^{n-2}(\Omega^{1}_{\Xhat_x}(\log E_x ))
$$
is zero. 

Consider the following hypercohomology spectral sequence 

$$E_2^{p,q}=H^p(R^q \pi_{*} \Omega_{\Yhat}^1 (\log E)) 
\Rightarrow H^{p+q}
(\Omega_{\Yhat}^1 (\log E)).$$

Therefore $E_2^{p,q}=0$ whenever both $p,q>0.$

We know
\begin{align*}
H^{n-2}(\Omega_{\Yhat}^1(\log E))
= E_\infty^{n-2}
\rightarrow E_2^{0,n-2}
&= H^0(R^{n-2}\pi_*\Omega^1_{\Yhat}(\log E)) \\
&= \bigoplus\limits_{x \in \mathrm{Sing}(Y)} H^{n-2}(\Omega^{1}_{\Xhat_x}(\log E_x))
\end{align*}
is zero. In other words, we have $E_\infty^{0,n-2}=0.$ 

Note $E_2^{n-2,0}=E_\infty^{n-2,0}$ which follows from
$$
E_2^{0,n-3}= \bigoplus\limits_{x \in \mathrm{Sing}(Y)} H^{n-3}(\Omega^{1}_{\Xhat_x}(\log E_x )) =0
$$
by \cite[Theorem 5]{steenbrink1997bois}, as the singularities are isolated lci. 

Hence, we obtain an isomorphism $E_\infty^{n-2,0}=H^{n-2}(\pi_*\Omega^1_{\Yhat}(\log E))\cong H^{n-2}
(\Omega_{\Yhat}^1 (\log E)).$ But since $n\geq 4$ and the singularities are isolated $$H^{n-2}(\pi_*\Omega^1_{\Yhat}(\log E)(-E))\cong H^{n-2}
(\pi_*\Omega_{\Yhat}^1 (\log E))$$ which combined with the previous result gives the equality $\underline{h}^{n-1,2}=\underline{h}^{1,n-2}.$

\end{remark}

\begin{remark}
In higher dimensions, the condition $\underline{h}^{n-1,2}=\underline{h}^{1,n-2}$ would be immediate if all the singularities were 1-rational (\cite[Corollary 3.22, Theorem 3.24]{friedman2024higher} and \cite[Theorem A]{park2025hodge}), in which case 
$$
\underline{h}^{p,q}(Y)=\underline{h}^{q,p}(Y)=\underline{h}^{\,n-p,\,n-q}(Y)=\underline{h}^{\,n-q,\,n-p}(Y),\qquad
\text{for all } 0\le p\le 1 \text{ and } 0\le q\le n.$$

However, this is too strong an assumption for smoothability, as our arguments rely on the fact that the singularities are not 1-rational i.e., that $H^2_{E_x} (\Omega_{\Xhat_x}^{n-1}(\log E_x)(-E_x))\neq 0.$ 
\end{remark}

\begin{remark}\label{rem: PD}
We conclude with a discussion of the equality $\underline{h}^{n-1,2}=\underline{h}^{1,n-2}$ and its relation to the Poincar\'{e} duality of the variety.

For threefolds, the assumption $\underline{h}^{2,2}=\underline{h}^{1,1}$ is also implied by the Poincar\'{e} duality of the threefold. In fact, if the singularities are rational it is equivalent to the vertical symmetry of the Hodge-Du Bois diamond.

To see this, since the singularities are rational, the border of the diamond is symmetric (\cite[Corollary 3.22, Theorem 3.24]{friedman2024higher} and \cite[Theorem A]{park2025hodge}). More precisely, we have $\underline{h}^{3,3}=\underline{h}^{0,0}$ and $$\underline{h}^{3,1}=\underline{h}^{1,3}=\underline{h}^{2,0}=\underline{h}^{0,2}$$ 
$$\underline{h}^{3,2}=\underline{h}^{2,3}=\underline{h}^{1,0}=\underline{h}^{0,1}.$$ 

Hence, one has full vertical symmetry of the Hodge-Du Bois diamond of $Y$ if and only if $\underline{h}^{2,2}=\underline{h}^{1,1}$ if and only if $Y$ is $\QQ$-factorial.

For fourfolds with rational singularities, the condition $\underline{h}^{3,2}=\underline{h}^{1,2}$ is still equivalent to the full vertical symmetry of the Hodge-Du Bois diamond. 
As before, rational singularities imply that the border of the Hodge-Du Bois diamond is symmetric.  Moreover, the top part of the Hodge-Du Bois diamond ($ p + q \geq 4$), coincides with intersection
cohomology since the singularities are isolated. Since the singularities are lci, we have $\text{lcdef}(Y)=0,$ and so, by \cite[Corollary 8.9]{park2025hodge} the Hodge structure on $H^{3}(Y, \mathbb{Q})$ is pure and we have
\[
H^{i}(Y, \mathbb{Q}) \simeq IH^{i}(Y, \mathbb{Q})
\quad \text{for } i \in \{1,2\}.
\]
 Therefore, except for the middle row, one has horizontal symmetry. Moreover, vertical symmetry is equivalent to $b_3(Y)= b_5(Y)$, which, by the horizontal symmetry, is equivalent to our condition  $\underline{h}^{3,2} =\underline{h}^{1,2}.$

Of course, in higher dimension the condition $\underline{h}^{n-1,2}=\underline{h}^{1,n-2}$  is still implied by (but weaker than) the vertical symmetry of the Hodge-Du Bois diamond (i.e. by $b_{n-1}(Y)=b_{n+1}(Y)$).

\end{remark}

\vspace{1em}

Let us now explain how the methods above can be used to prove Theorem \ref{thm: 1limlci}.

\begin{proof}[Proof of Theorem \ref{thm: 1limlci}]
First, one handles the smoothing of the points that are not 1-Du Bois as in the proof of Theorem \ref{thm: lcinot1DB} and finds $\theta_1 \in \mathbb{T}^1_Y$ which maps outside of $m_xT^1_{Y,x}$ for each $x$ which is not 1-Du Bois.

Then for smoothing of 1-liminal points we make use of the assumption. We note that $Gr^{n-1}_FH^n(L_x)= H^1(\Omega_{\Xhat_x}^{n-1}(\log E_x)|_{E_x})$ so $\varphi$ is given as the composition of coboundary map $$\bigoplus\limits_{x \text{ is }1-\text{liminal}}H^1(\Omega_{\Xhat_x}^{n-1}(\log E_x)|_{E_x})\to H^2( \Omega^{n-1}_{\Yhat}(\log E)(-E))$$ with the map $H^2( \Omega^{n-1}_{\Yhat}(\log E)(-E))\to H^2( \Omega^{n-1}_{\Yhat}).$ The latter is an injection \cite[Lemma 1.3]{FLklim} so our assumption says
there exists an element $\alpha$ in  $\bigoplus\limits_{x \text{ is }1-\text{liminal}} \Gr^{n-1}_FH^n(L_x)$ which is nonzero in each factor and which maps to zero in $H^2( \Omega^{n-1}_{\Yhat}(\log E)(-E)).$ Note for 1-liminal singularities $H^2_{E_x}(\Omega^{n-1}_{\Xhat_x}(\log E_x))=0$ as they are 1-Du Bois so the short exact sequences:
$$0\to \text{Gr}_F^{n-1}H^n(L_x) \to H^2_{E_x}(\Omega^{n-1}_{\Xhat_x}(\log E_x)(-E_x))\to H^2_{E_x}(\Omega^{n-1}_{\Xhat_x}(\log E_x))\to 0,$$ implies $\text{Gr}_F^{n-1}H^n(L_x) \cong H^2_{E_x}(\Omega^{n-1}_{\Xhat_x}(\log E_x)(-E_x)).$ So using the notation as in the Proof of Theorem \ref{thm: lcigeneral} our assumption says there exists an element $\alpha\in H^2_E( \Omega^{n-1}_{\Yhat}(\log E)(-E))$ which is nontrivial in each 1-liminal component such that $h'(\alpha)=0$ in $H^2( \Omega^{n-1}_{\Yhat}(\log E)(-E)).$ 
By the short exact sequence at the end of Section \ref{sec: 2.3} $m_xH^2_{E_x}(\Omega^{n-1}_{\Xhat_x}(\log E_x)(-E_x))=0$ for 1-liminal singularities so we have $\alpha\in H^2_E( \Omega^{n-1}_{\Yhat}(\log E)(-E))$ which is not in $m_xH^2_{E_x}(\Omega^{n-1}_{\Xhat_x}(\log E)(-E))$ for each 1-liminal component and such that $h'(\alpha)=0$ . So, as in Proof of Theorem \ref{thm: lcigeneral}
we can find $\theta_2 \in \mathbb{T}^1_Y$ which maps to an element outside of $m_xT^1_{Y,x}$ for each $x$ which is 1-liminal. Finally, pick a general linear combination of $\theta_1$ and $\theta_2$ to obtain a deformation of $Y$ that smooths the points which are not 1-rational.
\end{proof}

\printbibliography
\Addresses

\end{document}